\numberwithin{equation}{section}
\newcommand{\U}{\mathcal{U}}
\newcommand{\V}{\mathcal{M}}
\newcommand{\D}{\mathcal{D}}
\newcommand{\RR}{\mathbb{R}}
\newcommand{\QQ}{\mathbb{Q}}
\newcommand{\ZZ}{\mathbb{Z}}
\newcommand{\Can}{2^{\omega}}
\newcommand{\pre}[2]{{}^{#1} #2}
\newcommand{\set}[2]{\{ #1 \mid #2 \}}
\newcommand{\PI}[2]{\ensuremath{\boldsymbol\Pi_{#2}^{#1}}}
\newcommand{\SI}[2]{\ensuremath{\boldsymbol\Sigma_{#2}^{#1}}}
\newcommand{\lh}{\operatorname{lh}}
\newcommand{\dom}{\operatorname{dom}}
\newcommand{\isom}{\cong}
\newtheorem{theorem}{Theorem}[section]
\newtheorem{lemma}[theorem]{Lemma}
\newtheorem{corollary}[theorem]{Corollary}
\newtheorem{proposition}[theorem]{Proposition}
\newtheorem{question}[theorem]{Question}
\newtheorem{quest}{Question}
\newtheorem{fact}[theorem]{Fact}
\newtheorem{claim}{Claim}[theorem]
\theoremstyle{definition}
\newtheorem{defin}[theorem]{Definition}
\theoremstyle{remark}
\newtheorem{remark}[theorem]{Remark}
\begin{document}

\title{Polish metric spaces with fixed distance set}
\date{\today}
\author[R.~Camerlo]{Riccardo Camerlo}
\address{Dipartimento di matematica, Universit\`a di Genova, Via Dodecaneso 35, 16146 Ge\-no\-va --- Italy}
\email{camerlo@dima.unige.it}
\author[A.~Marcone]{Alberto Marcone}
\address{Dipartimento di scienze matematiche, informatiche e fisiche, Universit\`a di Udine, Via delle Scienze 208, 33100 Udine --- Italy}
\email{alberto.marcone@uniud.it}
\author[L.~Motto Ros]{Luca Motto Ros}
\address{Dipartimento di matematica \guillemotleft{Giuseppe Peano}\guillemotright, Universit\`a di Torino, Via Carlo Alberto 10, 10123 Torino --- Italy}
\email{luca.mottoros@unito.it}
 \subjclass[2010]{Primary: 03E15; Secondary: 54E50}
 \keywords{Borel reducibility; Polish spaces; isometry; isometric embeddability}
\thanks{Camerlo's research was partially carried out while visiting the \'Equipe de logique at the Universit\'e de Lausanne.
Marcone's research was supported by PRIN 2012 Grant \lq\lq Logica, Modelli e
Insiemi\rq\rq\ and departmental PRID funding \lq\lq HiWei --- The higher
levels of the Weihrauch hierarchy\rq\rq. Motto Ros was supported by the Young
Researchers Program ``Rita Levi Montalcini'' 2012 through the project ``New
advances in Descriptive Set Theory''.}

\begin{abstract}
We study Polish spaces for which a set of possible distances \( A \subseteq
\RR^+ \) is fixed in advance. We determine, depending on the properties of
\( A \), the complexity of the collection of all Polish metric spaces with
distances in $A$, obtaining also example of sets in some Wadge classes
where not many natural examples are known. Moreover we describe the
properties that \( A \) must have in order that all Polish spaces with
distances in that set belong to a given class, such as zero-dimensional,
locally compact, etc. These results lead us to give a fairly complete
description of the complexity, with respect to Borel reducibility and again
depending on the properties of \( A \), of the relations of isometry and
isometric embeddability between these Polish spaces.
\end{abstract}

\maketitle

\tableofcontents

\section{Introduction}

A common problem in mathematics is to classify interesting objects up to some
natural notion of equivalence. More precisely, one considers a class of
objects \( X \) and an equivalence relation \( E\) on \( X \), and tries to
find a set of complete invariants \( I \) for \( (X,E) \). To be of any use,
such an assignment of invariants should be as simple as possible. In most
cases, both \( X \) and \( I \) carry some intrinsic Borel structures, so
that it is natural to ask the assignment to be a Borel measurable map.

A classical example  is the problem  of classifying separable complete metric
spaces, called \emph{Polish metric spaces}, up to isometry. In
\cite{gromov1999} Gromov showed for instance that one can classify compact
Polish metric spaces using (essentially) elements of \( \RR \) as complete
invariants; in this case, one says that the corresponding
classification problem is smooth. However, as pointed out by Vershik in
\cite{vershik1998} the problem of classifying arbitrary Polish metric spaces
is \guillemotleft an enormous task\guillemotright, in particular it is far
from being smooth. Thus it is natural to ask ``how complicated'' is such a
classification problem.

A natural tool for studying the complexity of classification problems is the
notion of Borel reducibility introduced in \cite{Friedman1989}
and in \cite{HKL}: we say that a classification problem \( (X,E) \) is
\emph{Borel reducible} to another classification problem \( (Y,F) \) (in
symbols, \( E \leq_B F \)) if there exists a Borel measurable function \( f
\colon X \to Y \) such that \( x \mathrel{E} x' \iff f(x) \mathrel{F} f(x')
\) for all \( x,x' \in X \). Intuitively, this means that the classification
problem \( (X,E ) \) is not more complicated than \( (Y,F) \): in fact, any
assignment of complete invariants for \( (Y,F) \) may be turned into an
assignment for \( (X,E) \) by composing  with \( f \). A comprehensive
reference for the theory of Borel reducibility is \cite{gaobook}.

In the seminal \cite{Gao2003} (see also~\cite{Clemens2001,clemensisometry}),
Gao and Kechris were able to determine the exact complexity of the
classification problem for isometry on arbitrary Polish metric spaces with
respect to Borel reducibility: it is Borel bireducible with the most complex
orbit equivalence relation (so every equivalence relation induced by a Borel
action of a Polish group on a Polish space Borel reduces to it). However they
left the open problems of establishing the complexity of isometry on locally
compact ultrametric and zero-dimensional Polish spaces. We have been able to
solve the first of these questions in \cite{ultrametric} using an approach
that goes back to Clemens \cite{ClemensPreprint} and Gao and Shao
\cite{GaoShao2011}: Clemens studied the complexity of isometry on the
collection of Polish metric spaces using only distances in a set \( A
\subseteq \RR^+ \) fixed in advance, while Gao and Shao considered the
restriction of Clemens' problem to ultrametric Polish spaces.

We answered the questions left open by Gao and Shao in \cite{ultrametric},
where we focused on the study of ultrametric Polish spaces with a fixed set
of distances and, as a byproduct, we showed that isometry on locally compact
(and even discrete) ultrametric Polish spaces is Borel bireducible with
countable graph isomorphism. In this paper we instead settle various
problems, or provide new proofs for known results, about \emph{arbitrary}
Polish metric spaces with a fixed set of distances.

Let \( \RR^+ = \{ r \in \RR \mid r \geq 0 \} \). Let \( (X,d_X) \) be a
Polish metric space, i.e.\ a separable space with a complete metric \( d_X \)
(which often is left implicit). We denote by \( D(X) \) the set of distances
that are realized in \( X \), i.e.
\[
D(X) = \{ r \in \RR^+ \mid \exists x,y \in X (d_X(x,y) = r) \}.
\]
All metric spaces $X$ we consider are always assumed to be nonempty, so that
$0 \in D(X)$.

\begin{defin}
We say that \( A \subseteq \RR^+ \) is a \emph{distance set} if $A =D(X)$ for
some Polish metric space $X$. When $A =D(X)$ we say that \emph{$A$ is
realized by $X$}. Let \( \D \) denote the set of all distance sets $A
\subseteq \RR^+$.
\end{defin}

Clemens characterized the members of \( \D \) in his PhD thesis.

\begin{theorem}[{\cite[Theorem 4.3]{ClemensThesis}}] \label{clemensrealized}
Let $A \subseteq \RR^+$. Then $A$ is a distance set if and only if $A$ is
analytic, $0 \in A$, and either $A$ is countable or $0$ is a limit point of
$A$.
\end{theorem}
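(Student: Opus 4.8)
The plan is to prove the two implications separately. For necessity, suppose $A = D(X)$ for a Polish metric space $(X,d_X)$. Since $X$ is nonempty, any $x \in X$ gives $0 = d_X(x,x) \in A$; since $X \times X$ is Polish and $d_X \colon X \times X \to \RR^+$ is continuous, $A = d_X[X \times X]$ is analytic. For the last clause I would argue contrapositively: if $0$ is not a limit point of $A$, fix $\varepsilon > 0$ with $A \cap (0,\varepsilon) = \emptyset$; then distinct points of $X$ are at distance $\geq \varepsilon$, so each ball of radius $\varepsilon/2$ is a singleton, and separability of $X$ forces $X$, hence $A = d_X[X \times X]$, to be countable.

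For sufficiency, assume $A$ is analytic, $0 \in A$, and $A$ is countable or $0$ is a limit point of $A$; one may assume $A \neq \{0\}$. If $A$ is countable, enumerate $A \setminus \{0\} = \{a_i : i \in I\}$ injectively with $I \subseteq \omega$ and let $X = \{p\} \cup \{q_i : i \in I\}$, with $d(p,q_i) = a_i$ and $d(q_i,q_j) = \max(a_i,a_j)$ for $i \neq j$. A routine check shows that $d$ is an ultrametric with $D(X) = A$, that $X$ is separable (being countable), and that $X$ is complete: each $q_i$ is isolated, since its ball of radius $a_i$ is $\{q_i\}$, so every Cauchy sequence is eventually constant or converges to $p$.

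The substantial case is when $A$ is uncountable, so that, by hypothesis, $0$ is a limit point of $A$. Here I would fix a strictly decreasing sequence $(\varepsilon_n)$ in $A \setminus \{0\}$ with $\varepsilon_n \to 0$, together with a Suslin scheme $(C_s)_{s \in \omega^{<\omega}}$ of closed sets presenting $A$: $C_{s ^\frown i} \subseteq C_s$, $\operatorname{diam}(C_s) \leq 2^{-|s|}$ for $|s| \geq 1$, and $A = \bigcup_x \bigcap_n C_{x \restriction n}$ with each nonempty intersection a singleton $\{f(x)\}$. Taking $T$ to be the pruning of $\{s : C_s \neq \emptyset\}$ and choosing $a_s \in A \cap C_s$ for $s \in T$ yields $a_{x \restriction n} \to f(x)$ along every branch and a continuous surjection $f \colon [T] \to A$. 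The space realizing $A$ will be $X = [T]$ equipped with a metric $d$ that blends the ``coarse'' labels $a_s$ (so that every element of $A$ occurs as a distance) with the ``fine'' scale $(\varepsilon_n)$ read off the level at which two branches first differ (so that branches sharing a long initial segment are close). The model to keep in mind is a self-similar $A$ such as the ternary Cantor set $K$, for which one may simply take $X = 2^\omega$ with $d(x, y) = \sum_{n \,:\, x(n) \neq y(n)} 2 \cdot 3^{-n}$: this is a metric inducing the usual topology on $2^\omega$, and $D(X) = K$.

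The main obstacle is to produce one formula for $d$ satisfying all of: (i) $d$ takes values in $A$; (ii) $d$ satisfies the triangle inequality — it is natural to aim for an ultrametric; (iii) $\varepsilon_n \leq d(x,y) \leq \delta_n$ whenever $x$ and $y$ first differ at level $n$, for some fixed $\delta_n \to 0$; and (iv) every $a \in A$ is realized as some $d(x,y)$. Granting such a $d$, clause (iii) makes $(X,d)$ complete (a Cauchy sequence is forced to share ever longer common prefixes, hence converges to a branch of the pruned tree $T$) and separable (for each $k$ there are only countably many $\varepsilon_k$-balls, since being $\varepsilon_k$-close forces agreement on a fixed-length initial segment), so $(X,d)$ is Polish; and clause (iv) together with surjectivity of $f$ gives $D(X) = A$. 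Reconciling (i) with (iv) is what forces the tree together with a combination that is genuinely not of ``maximum'' type: a naive choice such as $\max(f(x), f(y), \varepsilon_n)$ with $n$ the splitting level violates (iii), for it makes the uncountably many branches $x$ with $f(x) \geq \delta$ into a $\delta$-separated set and so destroys separability. It is exactly here that $0$ being a limit point of $A$ enters, supplying the abundance of arbitrarily small distances in $A$ needed to keep $d$ small on nearby branches while still obeying the triangle inequality.
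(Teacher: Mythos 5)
Your necessity argument and your countable case are correct and essentially routine: $d_X[X\times X]$ is a continuous image of a Polish space, hence analytic; if $0$ is isolated in $A$ the space is uniformly discrete, so separability forces countability; and the one-point-plus-satellites ultrametric with $d(q_i,q_j)=\max(a_i,a_j)$ indeed realizes any countable $A$ and is complete. But the substance of the theorem is the uncountable case, and there your proposal stops exactly where the work begins. You set up reasonable scaffolding (a Suslin scheme for $A$, labels $a_s\in A\cap C_s$, the continuous surjection $f\colon [T]\to A$, and the correct observation that a metric satisfying your condition (iii) yields a Polish topology on $[T]$), you then reduce everything to producing a metric $d$ satisfying (i)--(iv), explicitly call this ``the main obstacle'', and never produce it. That formula --- how to realize every value of $A$ as a distance while keeping \emph{all} distances inside $A$ (including the limits created by completion), keeping branches with long common prefixes close, and preserving the triangle inequality --- is precisely the content of Clemens' construction; note that the paper itself does not reprove this theorem but cites \cite[Theorem 4.3]{ClemensThesis}. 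As written, your text is a plan for a proof, not a proof.

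Moreover, the guiding idea you offer for closing the gap points in a direction that cannot succeed: no separable ultrametric space has an uncountable distance set. (Given $x,y$ and a countable dense $D$, pick $p,q\in D$ with $d(x,p),d(y,q)<d(x,y)$; the isosceles property of ultrametrics forces $d(p,q)=d(x,y)$, so $D(X)$ is countable --- this is also what underlies Theorem~\ref{necessary}(2) and Theorem~\ref{proponlyultrametric}(2).) So for uncountable $A$ the metric you need is necessarily non-ultrametric, and the ``blend'' of the labels $a_s$ with the scale $(\varepsilon_n)$ must be of an additive or interpolating kind rather than of max type. There is also a smaller incoherence in the wish list itself: as stated, (iii) bounds every distance by $\delta_0$, while (iv) requires realizing all of $A$, which may be unbounded; the upper bound should be imposed only at large splitting levels, with large elements of $A$ realized by a separate device. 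None of this refutes the overall strategy, but it shows that both the requirement list and, above all, the actual construction of $d$ still have to be supplied before this counts as a proof of the hard direction.
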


Clemens studied also for which $A \in \D$, given a property of Polish spaces
(like being locally compact, or $\sigma$-compact, or discrete, or countable,
and so on) some Polish metric space with this property has distance set $A$
(see Theorem \ref{necessary} below). Here we consider the following dual
question:

\begin{quest}\label{quest1}
For which $A \in \D$ \emph{every} Polish metric space with distance set $A$
has a given property?
\end{quest}

We answer this question in Section \ref{realizable}, and in particular in
Theorem~\ref{proponlyultrametric}. Our results show in particular that lower
bounds for the complexity of the restriction of isometry to zero-dimensional
Polish metric spaces (one of the problems of Gao and Kechris) can be obtained
by classifying the restriction of isometry to spaces with a fixed distance
set which is dense in some right neighborhood of $0$ but does not contain any
such neighborhood.

Another natural question is the following:

\begin{quest}\label{quest2}
Given $A \in \D$, what is the complexity of the collection $\V^\star_A$ of
Polish metric spaces having distance set $A$? What about the collection
$\V_A$ of Polish metric spaces having distance set included in $A$ (in which
case we can drop the requirement $A \in \D$)?
\end{quest}

This (and Question \ref{quest3} below) requires to view Polish metric spaces
as members of some hyperspace of all Polish metric spaces: we describe the
set-up in Section \ref{term} and answer quite satisfactorily Question
\ref{quest2} in Section \ref{complexity}. In particular
Theorems~\ref{vaborel}(2) and~\ref{vastarborel}(1) characterize when $\V_A$
and $\V^\star_A$ are standard Borel. Tables \ref{tableM} and \ref{tableM*}
summarize our results for the complexity of $\V_A$ and $\V^\star_A$ when $A
\in \D$.

As a corollary, in Theorem \ref{Urysohn} we also extend the characterization
of the distance sets $A$ which admit an $A$-Urysohn space obtained by Sauer
\cite{Sauer}\footnote{We thank Joseph Zielinski for directing us to Sauer's
paper.}.

The last main questions we deal with are the original motivation for this
research:

\begin{quest}\label{quest3}
Given $A \in \D$, what is the complexity with respect to Borel reducibility
of isometry and isometric embeddability restricted to $\V^\star_A$ (denoted
respectively ${\isom_A^\star}$ and $\sqsubseteq_A^\star$)?
What about the same problem for isometry and isometric embeddability
restricted to $\V_A$ (denoted respectively ${\isom_A}$ and $\sqsubseteq_A$)?
\end{quest}

We study this question in Section \ref{polish}, and our main results include
the following:
\begin{itemize}
\item The fact that ${\sqsubseteq_A}$ and ${\sqsubseteq^\star_A}$ have the
    same complexity for all $A \in \D$ (Corollary \ref{corsqsubseteqA}) and
    that ${\isom_A}$ and ${\isom_A^\star}$ have the same complexity when
    $A$ is countable (Theorem \ref{thm:countable}).
\item The classification with respect to Borel reducibility of \(
    \isom^\star_A \), depending on the properties of \( A \)
    (Theorem~\ref{isomstarA}). In particular, we characterize when
    countable graph isomorphism Borel reduces to \( \isom^\star_A \)
    (Theorem~\ref{appendixisom}).
\item The exhaustive description of the complexity with respect to Borel
    reducibility of \( \sqsubseteq^\star_A \)
    (Theorem~\ref{sqsubseteqstarA}).
\item The fact that whenever \( \sqsubseteq^\star_A \) is complete
    analytic, it has also the stronger property of being invariantly
    universal (Theorem~\ref{thm:invuniv}).
\end{itemize}

The first item substantially enriches the picture obtained by Clemens in
\cite{ClemensPreprint}, almost completely solving his original problem about
isometry. We also answer some of the other questions contained in
\cite{ClemensPreprint} (Proposition~\ref{questionclemens},
Theorem~\ref{thm:isomA} and Theorem~\ref{thm:countable}), and their analogues
concerning  isometric embeddability (Proposition~\ref{questionclemens} and
Corollary~\ref{corsqsubseteqA}).

\section{Preliminaries}\label{term}
If $\mathcal{A}$ is a countably generated $\sigma$-algebra of subsets of $X$
that separates points we refer to the members of $\mathcal{A}$ as Borel sets
(indeed, as shown e.g.\ in \cite[Proposition 12.1]{Kechris1995}, in this case
$\mathcal{A}$ is the collection of Borel sets of some separable metrizable
topology on $X$), and to \( (X, \mathcal{A}) \) as a Borel space. The Borel
space $(X, \mathcal{A})$ is standard if $\mathcal{A}$ is the collection of
Borel sets of some Polish (i.e.\ separable and completely metrizable)
topology on $X$. A map between two Borel spaces is Borel if the preimages of
Borel sets of the target space are Borel sets of the domain.

We denote by $\SI11(X)$ the family of subsets of the standard Borel spaces
$X$ which are Borel images of a standard Borel space. For $n>0$, $\PI1n(X)$
is the class of all complements of sets in $\SI1n(X)$, and $\SI1{n+1}(X)$ is
the family of Borel images of a set in $\PI1n(Y)$ for some standard Borel
space $Y$. We have $\SI1n \cup \PI1n \subseteq \SI1m \cap \PI1m$ whenever
$n<m$, and for uncountable standard Borel spaces the inclusion is strict.
This hierarchy is the \emph{projective hierarchy}. \SI11 and \PI11 sets are
called resp.\ \emph{analytic} and \emph{coanalytic} sets. The class of
differences of two analytic subsets (equivalently: of intersections of an
analytic and a coanalytic subset) of $X$ is denoted $D_2( \SI{1}{1} )(X)$.

We extend these notions also to Borel spaces $X$ which are not necessarily
standard. In particular we say that $A \subseteq X$ is analytic (or \SI11) if
there exists a standard Borel space $Y \supseteq X$ such that the Borel
subsets of $X$ are the intersections of the Borel subsets of $Y$ with $X$ and
$A$ is the intersection of some $B \in \SI11(X)$ with $X$.\smallskip

If $\mathbf{\Gamma}$ is a class of sets in Borel spaces closed under Borel
preimages (like \SI1n and \PI1n), $Y$ is a standard Borel space and $A
\subseteq Y$, we say that $A$ is \emph{Borel $\mathbf{\Gamma}$-hard} if for
every $B \in \mathbf{\Gamma} (X)$, where $X$ is a standard Borel space, is
\emph{Borel Wadge reducible} to $A$, i.e.\ there exists a Borel function $f:
X \to Y$ such that $f^{-1}(A) = B$. We say that $A$ is \emph{Borel
$\mathbf{\Gamma}$-complete} if, in addition, $A \in \mathbf{\Gamma} (Y)$. If
$A$ is Borel $\mathbf{\Gamma}$-hard and $A$ is Borel Wadge reducible to $B$,
then $B$ is Borel $\mathbf{\Gamma}$-hard as well: this is the typical way to
prove hardness results.

The classes $\mathbf{\Gamma}$ we are interested in are closed under Borel
preimages and such that either $\mathbf{\Gamma}$ or its dual
$\check{\mathbf{\Gamma}}$ (consisting of the complements of the elements of
$\mathbf{\Gamma}$) is closed under intersection with $\PI11$ sets. For these
classes and any Polish topology compatible with the standard Borel spaces,
Borel $ \mathbf{\Gamma } $-hardness can be witnessed by continuous functions:
see \cite{kechri}, where this fact is stated for the class $ \PI11$, but the
argument actually works under our more general assumptions on
$\mathbf{\Gamma}$. Therefore Borel $\mathbf{\Gamma}$-hardness and Borel
$\mathbf{\Gamma}$-completeness coincide with $\mathbf{\Gamma }$-hardness and
$\mathbf{\Gamma }$-completeness, which are the notions used when dealing with
Wadge reducibility. For this reason we drop Borel from this terminology.

Most results in Section \ref{complexity} state that a collection of Polish
metric spaces is $\mathbf{\Gamma}$-complete for some $\mathbf{\Gamma}$, and
thus pinpoint the complexity of that particular collection by showing that it
belongs to $\mathbf{\Gamma}$ and not to any simpler class. When $
\mathbf{\Gamma } \neq \check{ \mathbf{\Gamma }}$, this implies in particular
that such a collection does not belong to $ \check{\mathbf{\Gamma}
}$.\smallskip

Borel Wadge reducibility can be generalized from sets to binary (and in fact,
$n$-ary for any $n$) relations as follows. Let $R$ and $S$ be binary
relations on Borel spaces $X$ and $Y$, respectively. We say that $R$ is
\emph{Borel reducible} to $S$, and we write $R \leq_B S$, if there is a Borel
function $f: X\to Y$ such that $x \mathrel{R} x'$ if and only if $f(x)
\mathrel{S} f(x')$ for all $x,x'\in X$. If $R\leq_BS$ and $S \leq_B R$ we say
that $R$ and $S$ are \emph{Borel bireducible} and we write $R \sim_B S$. If
on the other hand we have $R\leq_BS$ and $S\nleq_BR$ we write $R <_B S$.

If $\mathbf{\Gamma}$ is a class of binary relations on standard Borel spaces
and $S \in \mathbf{\Gamma}$, we say that $S$ is \emph{complete for
$\mathbf{\Gamma}$} if $R \leq_B S$ for all $R \in \mathbf{\Gamma}$. Some
relevant classes $\mathbf{\Gamma}$ one might consider are the collection of
all analytic equivalence relations and the collection of all analytic
quasi-orders. (Recall that a quasi-order is a reflexive and transitive binary
relation.) An example of a complete quasi-order for the latter class is
embeddability between countable graphs, see \cite{louros}. When
$\mathbf{\Gamma}$ is the class of orbit equivalence relations (that is, those
Borel reducible to a relation induced by a Borel action of a Polish group on
a standard Borel space) a complete element for $\mathbf{\Gamma}$ is isometry
on arbitrary Polish spaces, see \cite{Gao2003}. Another important example is
the class of equivalence relations classifiable by countable structures, that
is those Borel reducible to isomorphism on countable structures. The
canonical example of an equivalence relation complete for this class is
countable graph isomorphism, see \cite{Friedman1989}.

We reserve the term \lq\lq complete for $\mathbf{\Gamma}$\rq\rq\ for
relations defined on some standard Borel space. In Section \ref{polish}, we
often consider analytic relations on Borel spaces which (by the results of
Section \ref{complexity}) are not standard. In these cases we rather state
that a relation is \emph{Borel bireducible with a complete for
$\mathbf{\Gamma}$ relation}.\medskip

We denote by $\isom$ and $\sqsubseteq$ the relations of isometry and
isometric embeddability between metric spaces. Recall that a metric space is
Polish if and only if it is isometric to an element of \( F(\mathbb{U}) \),
the collection of all nonempty closed subsets of the Urysohn space \(
\mathbb{U} \) (here we differ slightly from \cite{Kechris1995}, where \(
F(\mathbb{U}) \) includes the empty set). The space $F(\mathbb{U})$ is
endowed with the Effros Borel structure, which turns it into a standard Borel
space: the hyperspace containing all Polish metric spaces up to isometry.
Notice that $\isom$ and $\sqsubseteq$ are analytic relations on
$F(\mathbb{U})$. We fix also a sequence of Borel functions \( (\psi_n)_{n \in
\omega} \) from \( F(\mathbb{U}) \) into \( \mathbb{U} \) such that \( \{
\psi_n(X) \mid n \in \omega \} \) is dense in \( X \) for every \( X \in
F(\mathbb{U}) \), see \cite[Theorem 12.13]{Kechris1995}.

\begin{remark}\label{rem:coding}
Another possible coding for Polish metric spaces (used e.g.\ in
\cite{ClemensPreprint}) is sometimes convenient. In this approach a Polish
metric space $U$ is coded by an element $M$ of a suitable $\mathcal M$, which
is a closed subset of $\pre{\omega\times\omega }{ \RR } $: $U$ is the
completion of a set of points $\{ x_i \mid i\in\omega \}$ such that the
distance between $x_i$ and $x_j$ equals $M(i,j)$. As explained in
\cite[Section 2]{lmrnew}, this coding is equivalent to the one introduced
above, in the sense that there are Borel functions $\Phi: F(\mathbb{U}) \to
\mathcal M$ and $\Psi: \mathcal M \to F(\mathbb{U})$ such that $\Phi(X)$
codes a space isometric to $X$ and $\Psi(M)$ is isometric to the space coded
by $M$.
Therefore the results can be transferred between the two settings. In
particular, it is often easier to check that certain maps are
Borel-measurable using $\mathcal M$ rather than $F(\mathbb{U})$ (see e.g.\
the proof of Proposition~\ref{1531426}).
\end{remark}

Using the coding in $F( \mathbb U )$ we have the following formalizations of the
collections of Polish metric spaces using a prescribed set of distances.

\begin{defin}
Given $A \subseteq \RR^+$, let
\[
\V_A= \set{X\in F( \mathbb U )}{D(X)\subseteq A} \text{ and }
\V_A^\star = \set{X\in F( \mathbb U )}{D(X)=A} .
\]
Let the equivalence relations $ \isom_A $ and $ \isom_A^\star$ and the
quasi-orders $\sqsubseteq_A$ and $\sqsubseteq_A^\star$ be the restrictions of
isometry and isometric embeddability to $\V_A$ and $\V_A^\star$.
\end{defin}

The relations $ \isom_A $, $ \isom_A^\star$, $\sqsubseteq_A$, and
$\sqsubseteq_A^\star$ are defined on Borel spaces which are not necessarily
standard (not even when $A$ is countable, as we will show). We will discuss
the complexity of $\V_A$ and $\V_A^\star$ at length in Section
\ref{complexity}.

In \cite{ultrametric} we studied the restrictions of isometry and isometric
embeddability to
\[
\U_A = \set{X \in \V_A}{d_X \text{ is an ultrametric}} \text{ and }
\U^\star_A = \set{X \in \V^\star_A}{d_X \text{ is an ultrametric}}.
\]
By Theorem \ref{necessary} below the latter is nonempty exactly when $A \in
\D$ is countable. In contrast with the results of Section \ref{complexity},
$\U_A$ and $\U_A^{\star}$ are both standard Borel spaces (see
\cite[Proposition 4.5]{ultrametric}). We first considered the case of $A$
ill-founded (with respect to the standard ordering of the reals) and,
extending results of \cite{Gao2003}, proved the following.

\begin{theorem}[{\cite[Corollary 5.7, and Theorems 6.3 and 6.4]{ultrametric}}]\label{isomuniversalstar}
Let $A \in \D$ be countable and ill-founded. Then:
\begin{enumerate}
\item Isometry on $\U_A$ and on $\U_A^{\star}$ are both complete for
    equivalence relations classifiable by countable structures.
\item Isometric embeddability on $\U_A$ and on $\U_A^{\star}$ are both
    complete for analytic quasi-orders.
\end{enumerate}
\end{theorem}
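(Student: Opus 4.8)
\emph{Strategy and upper bounds.} The plan is to establish first that the relations in question belong to the stated classes, and then the matching lower bounds, the latter --- especially for $\U_A^\star$ --- being the substantive part. For the upper bounds, note that since $A$ is countable every $X \in \U_A$ is determined up to isometry by its \emph{tree of balls} $T_X$, whose nodes are the closed balls of $X$ of radius $r$, for $r \in A \setminus \{0\}$, ordered by reverse inclusion (a tree, since the balls through a fixed point form a chain) and labelled by radius. Separability makes each level countable, so $T_X$ is a countable structure; $d_X$ is recovered from $T_X$ via $d_X(x,y) = \inf\{ r \in A \mid B(x,r) = B(y,r) \}$, and $X$ is the completion of the branch space of $T_X$. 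A routine check gives that $X \mapsto T_X$ is Borel with $X \isom Y \iff T_X \cong T_Y$, so $\isom_A$ --- hence also $\isom_A^\star$ --- is classifiable by countable structures, while $\sqsubseteq_A$ and $\sqsubseteq_A^\star$ are plainly analytic quasi-orders. Only countability of $A$ is used here.

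\emph{Lower bounds on $\U_A$.} Using that $A$ is ill-founded, fix a strictly decreasing sequence $(a_n)_{n \in \omega}$ of elements of $A$. Since isomorphism of countable graphs is complete for equivalence relations classifiable by countable structures~\cite{Friedman1989} and embeddability of countable graphs is a complete analytic quasi-order~\cite{louros}, and these transfer by standard Borel codings to isomorphism and a level-preserving notion of embeddability of countable trees, it suffices to realise countable trees, Borel-uniformly, as trees of balls inside $\U_A$. To a countable tree $S$ we would associate a pruned, suitably saturated tree $\hat S \subseteq \omega^{<\omega}$ --- every node having infinitely many immediate successors, coding the data of $S$ --- whose isomorphism and embeddability type still determine those of $S$, and let $Y_S$ be the body $[\hat S]$ equipped with the complete ultrametric $d(\xi, \eta) = a_{\Delta(\xi, \eta)}$, where $\Delta(\xi,\eta)$ is the least coordinate at which $\xi$ and $\eta$ differ; then $Y_S$ is a Polish ultrametric space with $D(Y_S) = \{0\} \cup \{ a_n \mid n \in \omega \} \subseteq A$. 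Because $(a_n)$ is strictly decreasing, the closed ball of radius $a_n$ through $\xi$ is exactly $\{ \eta \mid \xi \restriction n = \eta \restriction n \}$, so the labelled tree of balls of $Y_S$ is canonically $\hat S$; hence an isometry $Y_S \to Y_{S'}$ is the same as an isomorphism $\hat S \cong \hat{S'}$, and --- with the saturation chosen correctly --- an isometric embedding $Y_S \hookrightarrow Y_{S'}$ induces a tree embedding of the required kind. Borelness of $S \mapsto Y_S$ would be immediate in the coding of Remark~\ref{rem:coding}. Since $\{ Y_S \} \subseteq \U_A$, this already gives completeness for $\U_A$.

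\emph{Passing to $\U_A^\star$.} To force the realised distances to be exactly $A$, I would fix once and for all a countable ultrametric Polish space $Z$ with $D(Z) = A$, which exists by Theorem~\ref{necessary}, chosen moreover to possess an isolated point $z^\ast$ (if necessary one adjoins a single new point, which can always be done while keeping the space ultrametric and its distance set equal to $A$). Then $Y_S$ is built using only the $a_n$ below the isolation radius of $z^\ast$, and $X_S$ is obtained from $Z$ by replacing $z^\ast$ with a copy of $Y_S$ bearing a fixed ``marker'' at its coarsest level --- for instance, arrange that the root of every $\hat S$ splits into exactly two and build $Z$ so that no small ball of $Z$ has this shape. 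Then $X_S$ is a Polish ultrametric space, $D(X_S) = D(Z) \cup D(Y_S) = A$, the map $S \mapsto X_S$ is Borel, and $Y_S$ sits inside $X_S$ as the unique small ball carrying the marker; consequently every isometry --- and, if the marker is engineered to persist under embeddings, every isometric embedding --- between $X_S$ and $X_{S'}$ restricts to one between $Y_S$ and $Y_{S'}$. Thus $S \mapsto X_S$ is a Borel reduction witnessing completeness of $\isom_A^\star$ for equivalence relations classifiable by countable structures and of $\sqsubseteq_A^\star$ for analytic quasi-orders.

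\emph{The main obstacle.} The correspondence between isometries and tree isomorphisms above is transparent, and so is the inclusion $\{ Y_S \} \subseteq \U_A$; the work lies in two places. First, one must design the saturation $\hat S$ so that an \emph{arbitrary} isometric embedding $Y_S \hookrightarrow Y_{S'}$ --- not a priori compatible with the ball hierarchy --- is forced to respect it and to reflect graph embeddability. Second, in the passage to $\U_A^\star$ one must choose $Z$, its isolated point, and the marker so that the $Y_S$-part of $X_S$ is recognisable not merely by isometries but by isometric embeddings; here the possible accumulation of elements of $A$ at the relevant radii forces a delicate case analysis. I expect this second point to be the main obstacle. Ill-foundedness of $A$ is used essentially throughout: when $A$ is well-founded the trees of balls are too constrained to encode arbitrary countable structures, which is precisely why the hypothesis appears.
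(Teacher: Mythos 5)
First, note that the paper does not prove this statement at all: it is imported verbatim from \cite[Corollary 5.7, Theorems 6.3 and 6.4]{ultrametric}, so your attempt has to be judged as a self-contained proof, and as such it has genuine gaps. The most concrete one is in your construction of $Y_S$. You take a strictly decreasing sequence $(a_n)_{n\in\omega}$ in $A$ and put on the body $[\hat S]$ of an everywhere infinitely-branching pruned tree the ultrametric $d(\xi,\eta)=a_{\Delta(\xi,\eta)}$. Ill-foundedness of $A$ does \emph{not} give you $a_n\to 0$: for $A=\{0\}\cup\{1+1/n \mid n\geq 1\}$ (a legitimate countable ill-founded distance set by Theorem~\ref{clemensrealized}, with $0$ isolated) every strictly decreasing sequence in $A$ is bounded away from $0$, so $[\hat S]$ is an uncountable uniformly discrete space, hence not separable and not in $\U_A$. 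In this case all members of $\U_A$ are countable discrete spaces (Theorem~\ref{proponlyultrametric}(3)), and the reduction must be built on countable spaces (essentially on the nodes of the tree rather than its branches); the cited \cite{ultrametric} has to treat the two regimes ($0$ a limit point of the chosen sequence or not) by genuinely different constructions, and your sketch covers at most the first.

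Second, the heart of part (2) is assumed rather than proved. It is true, as you say, that an isometric embedding of such spaces induces a label-preserving, meet-respecting injection of the ball trees; but reducing a complete analytic quasi-order (graph embeddability) to \emph{that} notion of embeddability between labelled trees is precisely the content of the Louveau--Rosendal theorem \cite{louros} and of \cite[Theorems 6.3, 6.4]{ultrametric} --- it is not a ``standard Borel coding'' plus a ``suitably saturated'' $\hat S$, and your own ``main obstacle'' paragraph concedes that the saturation making arbitrary isometric embeddings reflect graph embeddability is not designed. The same applies to the passage to $\U_A^\star$: the marker/isolated-point surgery on a space $Z\in\U_A^\star$ (which exists by Theorem~\ref{necessary}(2)) is a reasonable device for isometry, but for isometric embeddability you must show that a non-surjective embedding of $X_S$ into $X_{S'}$ cannot send the coding part into the ambient $Z$-part, and you explicitly leave this as an expectation. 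So the proposal is a sensible roadmap that matches the general strategy of the cited work (ball trees for the upper bound, tree-coded ultrametric spaces plus a distance-completing gadget for the $\star$ versions), but the two lower-bound constructions that constitute the actual theorem are missing, and one of the constructions offered fails outright when $0$ is isolated in $A$.
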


Then we dealt with well-founded $A$'s. Lemma 4.11 of \cite{ultrametric}
implies that the complexities of isometry and isometric embeddability on
$\U_A$ and $\U_A^\star$ for $A \in \D$ well-founded depend only on the order
type of $A$. If $\alpha$ with $1 \leq \alpha<\omega_1$ is the order type of
$A$ we can then write, as in \cite{ultrametric}, ${\isom_{\alpha}}$,
${\isom^\star_{\alpha}}$, ${\sqsubseteq_\alpha}$, and
${\sqsubseteq^\star_\alpha}$ in place of ${{\isom} \restriction \U_A}$,
${{\isom} \restriction \U^\star_A}$, ${{\sqsubseteq} \restriction \U_A}$ and
${{\sqsubseteq} \restriction \U^\star_A}$, respectively. Our results include
the following.

\begin{theorem}[{\cite[Lemma 5.8, and Theorems 5.15 and 6.11]{ultrametric}}]\label{lemmaequivalence}
\emph{ }

\begin{enumerate}[(1)]
\item For every $\alpha$ such that \( 1\leq\alpha < \omega_1 \) we have ${
    \isom_{\alpha }} \sim_B {\isom_{\alpha }^\star}$ and these equivalence
    relations are classifiable by countable structures.
\item The relations \( \isom_\alpha \), for $1\leq\alpha <\omega_1$, form a
    strictly increasing chain under $\leq_B$ of Borel equivalence relations
    which is cofinal below countable graph isomorphism (i.e.\ cofinal among
    Borel equivalence relations classifiable by countable structures).
\item For every $\alpha$ such that \( 1\leq\alpha < \omega_1 \) we have \(
    {\sqsubseteq_\alpha} \sim_B {\sqsubseteq^\star_\alpha} \).
\item Let $1\leq \alpha < \omega_1$. Then
\begin{enumerate}[(i)]
\item if \( \alpha \leq \omega \), then \( \sqsubseteq_\alpha \) is Borel;
\item if $\alpha > \omega$, \( \sqsubseteq_\alpha \) contains both upper
    and lower cones that are \SI11-complete, and hence
    $\sqsubseteq_\alpha$ is analytic non-Borel;
\item all classes of the equivalence relation induced by \(
    \sqsubseteq_{\omega+1} \) are Borel, hence \( \sqsubseteq_{\omega+1}
    \) is not complete for analytic quasi-orders;
\item for all \( \alpha < \beta \leq \omega+2 \), \( {\sqsubseteq_\alpha}
    <_B {\sqsubseteq_\beta} \).
\end{enumerate}
\end{enumerate}
\end{theorem}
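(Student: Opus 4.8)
The plan is to reduce all four statements to the combinatorics of countable well-founded labeled trees, using that in an ultrametric space the closed balls of each fixed radius \( a \in A \) partition the space and that these partitions refine one another. Given \( A \) well-founded of order type \( \alpha \), enumerate \( A = \{ a_\xi \mid \xi < \alpha \} \) increasingly; for \( X \in \U_A \) the relations ``\( d_X(x,y) \leq a_\xi \)'' are equivalence relations refining each other as \( \xi \) decreases, and well-foundedness of \( A \) makes the resulting stratification amenable to induction on \( \alpha \). Attaching to each ball the multiset of its maximal proper subballs produces a tree \( T_X \) whose nodes carry a radius in \( A \) and a multiplicity (the number of pairwise-isometric copies of a given subball), of rank bounded by the order type of \( D(X) \); separability forces countable branching, and completeness of \( d_X \) is reflected by a closure condition on branches. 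One first checks that \( X \cong Y \) iff \( T_X \) and \( T_Y \) are isomorphic as multiplicity-labeled trees and that \( X \mapsto T_X \) is Borel (easiest in the coding of Remark~\ref{rem:coding}); since isomorphism of such trees is Borel equivalent to isomorphism of countable structures, the ``classifiable by countable structures'' clause of~(1) follows. For \( \isom_\alpha \sim_B \isom_\alpha^\star \), observe that a Borel map sends each \( X \in \U_A \) to an isometric \( \hat X \in \U_A^\star \) by attaching, at a sufficiently coarse scale, a fixed rigid ultrametric space realizing exactly \( A \); on trees this operation is injective and reflects isomorphism, giving reductions in both directions.

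For~(2), the tree representation plus the bound on the rank of \( T_X \) immediately gives that each \( \isom_\alpha \) is Borel (isomorphism of labeled trees of a fixed countable rank is Borel), that \( \isom_\alpha \) is Borel reducible to countable graph isomorphism, and that \( \alpha \mapsto \isom_\alpha \) is \( \leq_B \)-increasing. Strictness and cofinality are the real content: I would code isomorphism of an arbitrary countable tree of rank \( <\beta \) into \( \isom_\alpha \) for a suitable \( \alpha \), padding each node with enough auxiliary structure to pin down its radius and rank, and then invoke the standard fact that isomorphism on countable well-founded trees of rank \( <\beta \), as \( \beta < \omega_1 \) varies, is strictly \( \leq_B \)-increasing and cofinal among Borel equivalence relations classifiable by countable structures; a boundedness argument transfers both properties to \( \{\isom_\alpha \mid 1 \leq \alpha < \omega_1\} \).

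The same ball analysis handles \( \sqsubseteq \): \( X \sqsubseteq Y \) holds iff \( T_X \) embeds into \( T_Y \) via a label- and multiplicity-respecting tree embedding, that is, a recursively defined system of injections between children-multisets. The Borel padding \( X \mapsto \hat X \) preserves and reflects this, yielding \( \sqsubseteq_\alpha \sim_B \sqsubseteq_\alpha^\star \) in~(3). For~(4)(i), if \( \alpha \leq \omega \) then the tree-embedding condition read level by level is arithmetical in the codes, so \( \sqsubseteq_\alpha \) is Borel. For~(4)(ii), when \( \alpha > \omega \) there is room to attach an arbitrary countable tree as a sub-configuration at a limit level, so one fixes \( Y_0 \in \U_A^\star \) and Borel reduces the \( \SI11 \)-complete set of ill-founded trees both to the upper cone \( \set{X}{X \sqsubseteq Y_0} \) and, by a dual construction, to a lower cone, making both cones \( \SI11 \)-complete and \( \sqsubseteq_\alpha \) analytic non-Borel. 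For~(4)(iii), one shows that each \( \sqsubseteq_{\omega+1} \)-equivalence class is determined by the isomorphism type of a finite-rank ``pruning'' of \( T_X \) and is therefore Borel, which already rules out \( \sqsubseteq_{\omega+1} \) being complete for analytic quasi-orders. Finally~(4)(iv) amounts to exhibiting separating examples at the two steps past \( \omega \), told apart by the Borel/non-Borel and cone-complexity features just established.

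The step I expect to fight hardest is~(4): identifying \( \omega+1 \) as the precise threshold where the equivalence classes of \( \sqsubseteq_\alpha \) stop being Borel, and making the two cone-completeness arguments of~(4)(ii) rigorous, since both depend on a delicate understanding of which tree-embedding configurations are \emph{forced} by completeness of \( d_X \) and admit no simplification. By contrast, the padding constructions behind~(1) and~(3) and all the upper bounds are essentially bookkeeping once the tree representation is set up. Full details can be found in \cite{ultrametric}.
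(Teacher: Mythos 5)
This theorem is not proved in the paper at all: it is imported verbatim from \cite{ultrametric} (Lemma 5.8, Theorems 5.15 and 6.11 there), so the paper's ``proof'' is the citation, and your write-up, which ends by deferring the details to the same reference, is in effect doing the same thing. Your sketch of the machinery (coding an ultrametric Polish space with well-founded distance set by its tree of balls with multiplicities, reducing isometry to isomorphism of countable labeled well-founded trees, and comparing with the Friedman--Stanley hierarchy) is indeed the general route taken in \cite{ultrametric}, so as a gloss on the citation it is reasonable.

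Read as a standalone argument, however, it has genuine gaps, concentrated exactly where you predicted. In (2), the strictness and cofinality claims are the whole content: ``padding each node with enough auxiliary structure'' and ``a boundedness argument'' is not an argument, and the reductions to and from the iterated Friedman--Stanley jumps are delicate (this is why \cite{ultrametric} devotes separate results to it). In (3)/(4) the identification ``$X \sqsubseteq Y$ iff $T_X$ admits a label- and multiplicity-respecting embedding into $T_Y$'' needs justification at limit levels, where completeness of the metric interacts with infinite branching; this is precisely what makes $\sqsubseteq_\alpha$ non-Borel for $\alpha>\omega$, so it cannot be dismissed as bookkeeping. Most concretely, your mechanism for (4)(iii) is unjustified and almost certainly proves too much: if every $\sqsubseteq_{\omega+1}$-class were determined by the isomorphism type of a finite-rank pruning of $T_X$ via a Borel pruning map, then the induced bi-embeddability relation would be Borel reducible to isomorphism of finite-rank trees and hence Borel, which is strictly stronger than what is known (only that each class is Borel; the exact complexity of $\sqsubseteq_\alpha$ for $\alpha\geq\omega+2$ is explicitly left open both in \cite{ultrametric} and in this paper). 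The argument in \cite{ultrametric} instead shows Borelness of each class separately, by a boundedness analysis relative to the fixed space $Y$, not by exhibiting a uniform finite-rank complete invariant. Similarly, in (1) the ``attach a rigid space at a sufficiently coarse scale'' reduction of $\isom_\alpha$ to $\isom_\alpha^\star$ must rule out embeddings/isometries that swap the attached copy with part of $X$ (compare the case analysis in the proof of Theorem~\ref{thm:isomA}); rigidity alone, without an argument about where the glued copy can land, does not settle this.
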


The problem of establishing the exact complexity of $\sqsubseteq_\alpha$ for
$\alpha \geq \omega+2$ is still open.

Basic tools to change from a set of distances to another one are \emph{metric
preserving functions}, i.e.\ functions $f \colon A \to \RR^+$ such that for
every metric $d$ on a space $X$ with range contained in $A$ we have that $f
\circ d$ is still a metric on $X$. There is a vast literature about metric
preserving functions defined on the whole $\RR^+$ (see \cite{Dobos,Cor} for
surveys). Since we are dealing with Polish metric spaces we introduce the
following definition, where we consider functions with a possibly proper
subset of $\RR^+$ as domain.

\begin{defin}
A function $f \colon A \to \RR^+$ is \emph{Polish metric preserving} if for
every complete metric $d$ with range contained in $A$ on a Polish space $X$
we have that $(X, f \circ d)$ is still a Polish metric space.
\end{defin}

\begin{proposition}\label{Polish_metric_pres}
A function $f:A\to \RR^+$ is Polish metric preserving if and only if it is
metric preserving and for every sequence $(x_n)_{n \in \omega}$ in $A$,
\begin{equation} \label{15031241}
\lim_{n\to\infty }x_n=0\quad \text{if and only if} \quad\lim_{n\to\infty
}f(x_n)=0.
\end{equation}
\end{proposition}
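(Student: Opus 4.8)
The plan is to prove the two directions separately, extracting from the definition of Polish metric preserving both the classical metric-preserving property and the limit condition \eqref{15031241}.

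First I would prove the forward direction. Assume $f$ is Polish metric preserving. To see that $f$ is metric preserving, let $d$ be an arbitrary metric on a space $X$ with range contained in $A$; I want $f \circ d$ to be a metric. The point is that metric-preservingness is a ``local'' and ``finitary'' property: $f \circ d$ is a metric iff its restriction to every three-point subspace $\{x,y,z\} \subseteq X$ is a metric (the triangle inequality, non-negativity and the zero condition all only involve at most three points at a time). But any three-point metric space is automatically complete and Polish, so applying the hypothesis to these finite subspaces (equipped with the restriction of $d$, which is a complete metric with range in $A$) shows that $f \circ d$ restricted to each triple is a metric, hence $f \circ d$ is a metric on $X$. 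This establishes that $f$ is metric preserving. For the limit condition, suppose $(x_n)_{n\in\omega}$ is a sequence in $A$. Build an explicit Polish metric space witnessing the behaviour: for instance, take a convergent sequence realizing the distances $x_n$ — concretely, one can use the space $X = \{\ast\} \cup \{p_n \mid n \in \omega\}$ with $d(\ast, p_n) = x_n$ and the $p_n$'s chosen so that $d$ is a genuine metric (this requires a little care, but can be arranged, e.g.\ by passing to a subsequence or by an ultrametric-style construction where $d(p_n, p_m) = \max(x_n, x_m)$ for $n \ne m$, which always gives a metric with range in $A$). Then $X$ is Polish (in fact a countable metric space) precisely when $x_n \to 0$, in which case it is the one-point compactification-like space; and $(X, f\circ d)$ is Polish precisely when $f(x_n) \to 0$. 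Since the hypothesis forces $(X, f \circ d)$ to be Polish whenever $X$ is, and conversely one checks directly that completeness of $f\circ d$ fails when $f(x_n) \not\to 0$ but $x_n \to 0$, we get \eqref{15031241}. The one subtlety I would be careful about is handling the case $x_n \to 0$ with infinitely many $x_n$ equal to $0$ or with repetitions; the ultrametric construction $d(p_n,p_m) = \max(x_n,x_m)$ is robust enough to sidestep these issues since it never requires the $x_n$ to be distinct.

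For the reverse direction, assume $f$ is metric preserving and satisfies \eqref{15031241}; I want to show $(X, f \circ d)$ is Polish whenever $(X,d)$ is a Polish space with $\operatorname{ran}(d) \subseteq A$. Since $f$ is metric preserving, $f \circ d$ is certainly a metric on $X$. It remains to check separability and completeness of $(X, f\circ d)$. Separability is easy and essentially topological: I will show that $d$ and $f\circ d$ induce the same topology on $X$ — in fact the same notion of convergent sequence — using condition \eqref{15031241}. Indeed, $d(y_k, y) \to 0$ iff $f(d(y_k,y)) \to 0$ by \eqref{15031241} applied to the sequence $(d(y_k,y))_k$, so a sequence converges to $y$ in $d$ iff it converges to $y$ in $f \circ d$; hence the two metrics are topologically equivalent, and separability transfers. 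For completeness, suppose $(y_k)$ is $f\circ d$-Cauchy; I must produce a limit. The natural move is to argue $(y_k)$ is $d$-Cauchy and then use completeness of $d$, with topological equivalence giving that the $d$-limit is also the $f \circ d$-limit.

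The main obstacle will be precisely this last point: an $f \circ d$-Cauchy sequence need not be $d$-Cauchy in general, because $f$ could be badly behaved away from $0$ (e.g.\ non-injective or oscillating), so I cannot simply invert $f$. The resolution is to observe that a Cauchy sequence that is not convergent must have its consecutive (or pairwise) distances bounded away from $0$ in the original metric $d$ — more precisely, if $(y_k)$ is $f\circ d$-Cauchy but has no $f\circ d$-limit, then it has no $d$-limit either (by topological equivalence), so by completeness of $d$ it is not $d$-Cauchy, meaning there is $\varepsilon > 0$ and infinitely many pairs $k < l$ with $d(y_k, y_l) \ge \varepsilon$. Now I invoke \eqref{15031241} in its contrapositive form: the set $\{f(t) \mid t \in A, t \ge \varepsilon\}$ is bounded away from $0$ — for if not, there would be a sequence $t_n \ge \varepsilon$ in $A$ with $f(t_n) \to 0$, contradicting \eqref{15031241} since $t_n \not\to 0$. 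Hence $f(d(y_k,y_l)) \ge \delta$ for some fixed $\delta > 0$ and infinitely many pairs $k<l$, contradicting that $(y_k)$ is $f \circ d$-Cauchy. Therefore every $f \circ d$-Cauchy sequence converges, completing the proof that $(X, f\circ d)$ is Polish.
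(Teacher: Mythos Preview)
Your backward direction and the ``three-point'' argument for metric preservation are correct and match the paper's approach (you even spell out the Cauchy-iff-Cauchy step that the paper leaves implicit).

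The gap is in your treatment of the limit condition in the forward direction. Your test space $X=\{\ast\}\cup\{p_n\}$ with $d(\ast,p_n)=x_n$ and $d(p_n,p_m)=\max(x_n,x_m)$ is \emph{always} Polish, regardless of whether $x_n\to 0$: it is countable (hence separable), and any $d$-Cauchy sequence is either eventually constant or converges to~$\ast$, so it is complete. Thus the claim ``$X$ is Polish precisely when $x_n\to 0$'' is false, and you cannot use it to handle the implication $f(x_n)\to 0\Rightarrow x_n\to 0$. More seriously, in the other implication your claim that ``completeness of $f\circ d$ fails when $f(x_n)\not\to 0$ but $x_n\to 0$'' is also false: after passing to a subsequence with $f(x_n)\ge\delta$, every pair of distinct points of $X$ is at $f\circ d$-distance $f(x_k)\ge\delta$ for some $k$, so $(X,f\circ d)$ is uniformly discrete, hence complete \emph{and} separable, hence Polish. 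A countable test space can never detect this failure, because a countable discrete metric space is automatically Polish.

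The paper avoids this by using two different constructions. For $x_n\to 0$ with $f(x_n)\not\to 0$ it puts the metric $d(\alpha,\beta)=x_n$ (where $n$ is least with $\alpha(n)\ne\beta(n)$) on the \emph{uncountable} Baire space $\pre{\omega}{\omega}$; then $f\circ d$ makes every point isolated and separability (not completeness) fails. For $x_n\not\to 0$ with $f(x_n)\to 0$ it uses the $\max$-ultrametric on $\{x_n\}$ alone (no extra point~$\ast$), so that the Cauchy sequence $(x_n)$ has nowhere to converge. Your single construction with the adjoined limit point~$\ast$ collapses exactly the distinction you need.
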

\begin{proof}
Let $f$ be Polish metric preserving. First we prove that $f$ is metric
preserving. Let $(X,d)$ be a metric space with distances in $A$: if
$(X,f\circ d)$ were not a metric space this would be witnessed on a subset
$X' \subseteq X$ of size two or three; then $(X',d)$ is Polish but
$(X',f\circ d)$ is not even a metric space.

Fix now a sequence $(x_n)_{n \in \omega}$ in $A$. If $(x_n)_{n \in \omega}$
converges to $0$ but $(f(x_n))_{n \in \omega}$ does not, then it can be
assumed that $(x_n)_{n \in \omega}$ is strictly decreasing and $(f(x_n))_{n
\in \omega}$ is bounded away from $0$. Let $d$ the metric on ${}^{\omega
}\omega $ defined by letting, for distinct $\alpha ,\beta\in {}^{\omega
}\omega $, $d(\alpha ,\beta )=x_n$ where $n$ is least such that $\alpha
(n)\ne\beta (n)$. Then $({}^{\omega }\omega ,d)$ is a Polish metric space,
while in $({}^{\omega }\omega ,f\circ d)$ every point is isolated, so
$({}^{\omega }\omega ,f\circ d)$ is not separable and hence not Polish.
Conversely, if $(x_n)_{n \in \omega}$ does not converge to $0$ but
$(f(x_n))_{n \in \omega}$ does, it can be assumed that $(x_n)_{n \in \omega}$
is bounded away from $0$. Let $X=\{ x_n\mid n\in\omega\} $, and define the
distance $d$ on $X$ by letting $d(x_n,x_m)=\max\{x_n,x_m\}$ if $x_n \neq
x_m$. Then $(X,d)$ is a discrete Polish ultrametric space. Then the sequence
$(x_n)_{n \in \omega}$ is a Cauchy sequence in $(X,f\circ d)$ but it does not
converge, as $(X,f\circ d)$ is discrete.

Assume now that $f$ is metric preserving and that condition \eqref{15031241}
holds for every sequence $(x_n)_{n \in \omega}$ in $A$. This means that if
$(X,d)$ is a metric space with distances in $A$, then the identity is a
homeomorphism between $(X,d)$ and $(X,f\circ d)$. In particular, if $(X,d)$
is Polish, then $(X,f\circ d)$ is separable; to conclude that $(X,f\circ d)$
is complete too, notice that a sequence in $X$ is $d$-Cauchy if and only if
it is $f\circ d$-Cauchy.
\end{proof}

A Polish metric preserving function $f: A \to A'$ transforms a space $(X,d)
\in \V_A$ into the space $(X, f \circ d) \in \V_{A'}$, which is homeomorphic
to $(X,d)$ via the identity function by the previous characterization. The
following proposition ensures that this transformation is always Borel.

\begin{proposition}\label{prop:automaticBorel}
Let $A \in \D$. Every Polish metric preserving $f:A \to \RR^+$ is Borel-measurable.
\end{proposition}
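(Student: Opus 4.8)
The plan is to derive Borel-measurability from the stronger fact that, as soon as $0$ is a limit point of $A$, the function $f$ is actually \emph{continuous}; continuity suffices, since then the preimage of an open (hence of a Borel) subset of $\RR^+$ is relatively open (hence relatively Borel) in $A$, which is exactly what ``$f$ is Borel'' means for the possibly non‑standard Borel space $A$. The opposite case is handled first and separately: if $0$ is \emph{not} a limit point of $A$, then, since $A \in \D$, Theorem~\ref{clemensrealized} forces $A$ to be countable, and any function defined on a countable separable metrizable space is Borel because in such a space every subset is $F_\sigma$. So from now on I would assume $0$ is a limit point of $A$.

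By Proposition~\ref{Polish_metric_pres} I may use two properties of $f$: it is metric preserving, and it satisfies the two‑sided condition~\eqref{15031241}. Continuity of $f$ at $0$ is then merely a restatement of~\eqref{15031241}. For continuity at a point $a \in A$ with $a>0$, the key idea is to control $f$ near $a$ by means of \emph{small} distances. Fix once and for all a sequence $(\epsilon_k)_k$ of positive elements of $A$ with $\epsilon_k \to 0$ (available precisely because $0$ is a limit point of $A$); then $f(\epsilon_k) \to 0$ by~\eqref{15031241}. Now, whenever $a_n \in A$ satisfies $|a - a_n| \le \epsilon_k$ and $\epsilon_k < a$, the three reals $a$, $a_n$, $\epsilon_k$ obey all triangle inequalities, so there is a three‑point metric space with these pairwise distances and with all its distances (including $0$) lying in $A$. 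Applying the metric preserving property of $f$ to this space yields $|f(a) - f(a_n)| \le f(\epsilon_k)$. Given $\delta > 0$, pick $k$ with $\epsilon_k < a$ and $f(\epsilon_k) < \delta$; then $|f(a) - f(a_n)| \le f(\epsilon_k) < \delta$ for all $a_n \in A$ close enough to $a$. This is sequential continuity of $f$ at $a$, hence continuity since $A$ is metrizable.

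I do not expect a genuine obstacle; the only point needing a little care is to check that the triple $(a, a_n, \epsilon_k)$ really does bound a (possibly degenerate, e.g.\ isosceles) three‑point metric space, which amounts to the elementary inequalities $|a - a_n| \le \epsilon_k \le a + a_n$, both guaranteed by the choices $|a - a_n| \le \epsilon_k$ and $0 < \epsilon_k < a$. It should also be noted that the $\epsilon_k$ must eventually be taken smaller than $a$, so the argument genuinely uses that $0$ is a limit point of $A$; this is consistent with the fact that for a set like $A = \{0,1\}$ no such continuity argument is available, but there $A$ is countable and the first paragraph applies.
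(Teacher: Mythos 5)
Your proof is correct, and it takes a genuinely different route from the paper. The paper fixes a space $(X,d) \in \V_A^\star$ realizing $A$, writes the graph of $f$ as an analytic subset of $\RR^+ \times \RR^+$ via limits along Cauchy sequences from a countable dense set (using that $\mathrm{id}_X$ is a homeomorphism between $(X,d)$ and $(X,f\circ d)$), and then invokes the analytic-graph theorem (the proof of \cite[Theorem 14.12]{Kechris1995}, transferred to analytic spaces via Souslin's theorem) to conclude that $f$ is Borel; there $A \in \D$ enters through the existence of a realizing space and the analyticity of $A$. You instead split according to whether $0$ is a limit point of $A$: if not, Theorem~\ref{clemensrealized} makes $A$ countable and every function on a countable metrizable space is Borel; if so, you prove the stronger statement that $f$ is \emph{continuous} on $A$, by the classical metric-preserving trick $|f(a)-f(a')| \leq f(\epsilon)$ for $\epsilon \in A$ with $|a-a'| \leq \epsilon < a$, combined with $f(\epsilon_k)\to 0$ from condition~\eqref{15031241}; your verification that the triple $(a,a',\epsilon)$ is metric is the right check, and the three-point space has all distances in $A$, so the metric-preserving hypothesis applies. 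Your argument is more elementary (no descriptive-set-theoretic machinery beyond the definition of the relative Borel structure) and yields more information (continuity whenever $0$ is a limit point of $A$), at the mild cost of a case split; the paper's argument is uniform in $A$. The only point you leave tacit is that continuity at $0$ also needs $f(0)=0$, which follows at once from metric preservation applied to a one-point (or two-point) space.
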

\begin{proof}
Fix $(X,d) \in \V_A^\star$ and a countable dense $D = \{x_i \mid i \in
\omega\} \subseteq X$. Let $r_{i,j} = d(x_i, x_j)$ and $r'_{i,j} =
f(r_{i,j})$. Define $F \subseteq \RR^+ \times \RR^+$ by setting $(a,b) \in F$
if and only if
\begin{align*}
\exists (i_k)_{k \in \omega}, (j_k)_{k \in \omega} \big[ & (x_{i_k})_{k \in \omega} \text{ and } (x_{j_k})_{k \in \omega}
                         \text{ are Cauchy sequences in } X\\
& \qquad \land \lim r_{i_k,j_k} = a \land \lim r'_{i_k,j_k} = b \big].
\end{align*}
The set $F$ is clearly analytic. Since $\mathrm{id}_X$ is a homeomorphism
between $(X,d)$ and $(X, f \circ d)$, the set $F$ is the graph of $f$. Using
the fact that Souslin's Theorem holds for analytic spaces (\cite[Exercise
28.3]{Kechris1995}), we have that the proof of \cite[Theorem
14.12]{Kechris1995} shows that functions on analytic spaces with analytic
graphs are Borel. Therefore, since $A$ is analytic by Theorem
\ref{clemensrealized}, we have that $f$ is Borel.
\end{proof}

\section{Distance sets of particular Polish metric spaces}\label{realizable}
Beside proving Theorem \ref{clemensrealized} characterizing distance sets,
Clemens also characterized the \( A \in \D \) which can be realized by Polish
metric spaces in a given class.

\begin{theorem}[\cite{ClemensThesis}]\label{necessary}
Let $A \in \D$. Then:
\begin{enumerate}
\item \( \V^\star_A \) always contains a zero-dimensional Polish metric
    space;
\item \( \V^\star_A \) contains a Polish ultrametric space if and only if
    it contains a discrete Polish metric space, if and only if \( A \) is
    countable;
\item \( \V^\star_A \) contains a connected Polish metric space if and only
    if it contains a path-connected Polish metric space if and only if \( A
    \) is an interval with left endpoint \( 0 \);
\item \( \V^\star_A \) contains a compact Polish metric space if and only
    if \( A \) is compact and either it is finite or it has \( 0 \) as
    limit point;
\item \( \V^\star_A \) contains a locally compact Polish metric space if
    and only if it contains a \( \sigma \)-compact Polish metric space, if
    and only if \( A \) is either countable or it is $\sigma$-compact and
    has \( 0 \) as limit point.
\end{enumerate}
\end{theorem}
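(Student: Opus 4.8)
The plan is to prove, for each of the five items, both implications: the ``necessity'' direction, that the existence of a realization in the indicated class constrains $A$ as stated, and the ``sufficiency'' direction, that this constraint suffices to build such a realization. The necessity directions are all soft. If $X$ is connected then $X\times X$ is connected, so $D(X)$ is a connected subset of $\RR^+$ containing $0$, i.e.\ an interval with left endpoint $0$; this is one direction of (3). If $X$ is compact then $D(X)$ is compact, and if in addition $X$ is infinite it has a non-isolated point $p$, hence a sequence $p_n\to p$ with $d_X(p_n,p)$ strictly decreasing to $0$, so $0$ is a limit point of $D(X)$; this is one direction of (4), and, together with the standard facts that a locally compact Polish space is $\sigma$-compact and that $D(X)=\bigcup_{n,m}d_X[K_n\times K_m]$ is $\sigma$-compact whenever $X=\bigcup_n K_n$ with the $K_n$ compact, it gives the ``only if'' parts of (5) once one recalls (Theorem~\ref{clemensrealized}) that a distance set is always countable or has $0$ as a limit point. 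For (2): a separable discrete space is countable, and in a separable ultrametric space every positive realized distance is already realized by a pair from a fixed countable dense set (if $d_X(x,y)=r$ and a dense point $x'$ has $d_X(x,x')<r$ then $d_X(x',y)=r$ by the isosceles property, and a suitable $y'$ is found likewise), so in both cases $D(X)$ is countable. The equivalences between the several clauses within (2), (3) and (5) then come for free, since each realization constructed below will lie simultaneously in all the relevant classes.

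For the sufficiency direction of (2), given a countable $A$ fix an enumeration $A\setminus\{0\}=\{a_n\mid n\in\omega\}$ and some $c\in A\setminus\{0\}$ (the case $A=\{0\}$ is trivial), and put on the set of pairs $(n,i)$ with $n\in\omega$, $i\in\{0,1\}$ the ultrametric with $d((n,0),(n,1))=a_n$ and $d((n,i),(m,j))=\max(c,a_n,a_m)$ for $n\ne m$. This is a Polish ultrametric space that is also discrete (the nearest neighbour of $(n,i)$ is at distance $\min(a_n,c)>0$) and has distance set exactly $A$, giving (2). For (3), a closed interval $[0,s]$ is realized by itself and $\RR^+$ by itself, while a half-open interval $[0,s)$ with $s<\infty$ is realized by a ``bouquet of segments'': glue closed segments $[0,s_n]$, $n\in\omega$, at a common endpoint, with $s_n\nearrow s/2$, under the path metric. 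Distances inside a single segment fill $[0,s/2)$, and distances across two distinct segments are the sums $p+q\in(0,s_n+s_m]\subseteq(0,s)$, with supremum $s$ never attained; hence the distance set is $[0,s)$, and the space is path-connected and Polish, giving (3).

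For the sufficiency direction of (1): if $A$ is countable the discrete space just built is already zero-dimensional, so assume $0$ is a limit point of $A$. Let $Y$ be a Polish space with $D(Y)=A$ (Theorem~\ref{clemensrealized}), fix a dense sequence $(y_n)$ in $Y$ and a sequence $\eta(0)>\eta(1)>\dots$ in $A$ with $\eta(n)\to 0$, and put $r_n=\eta(n)-\eta(n+1)$, so that $\sum_{j\ge n}r_j=\eta(n)$. Let $X$ be the closed — hence zero-dimensional Polish — subspace of $\pre{\omega}{\omega}$ of all $\sigma$ with $d_Y(y_{\sigma(n)},y_{\sigma(n+1)})<r_n$ for every $n$; every $\sigma\in X$ has a limit $\pi(\sigma)=\lim_n y_{\sigma(n)}\in Y$, and $\pi\colon X\to Y$ is a continuous surjection. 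Re-metrize $X$ by
\[
d_X(\sigma,\tau)=\max\bigl(d_Y(\pi(\sigma),\pi(\tau)),\ \eta(\Delta(\sigma,\tau))\bigr),
\]
$\Delta$ being the least coordinate of disagreement. Routine checks give that $d_X$ is a complete metric inducing the topology of $X$ and that $d_X(\sigma,\tau)$, a maximum of two members of $A$, always lies in $A$; and every $a\in A\setminus\{0\}$ is realized, by choosing $p,p'\in Y$ with $d_Y(p,p')=a$ and lifting them to $\sigma,\tau$ through $\pi$: arbitrary lifts work when $a\ge\eta(0)$, while if $\eta(m)\le a<\eta(m-1)$ one arranges $\sigma,\tau$ to agree on their first $m$ coordinates — possible since $\sum_{j\ge m}r_j=\eta(m)$ and $(y_n)$ is dense — so that $d_X(\sigma,\tau)=\max(a,\eta(\Delta(\sigma,\tau)))=a$. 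Hence $D(X)=A$. Granting (4), item (5) now follows: for $A$ $\sigma$-compact with $0$ a limit point, write $A=\bigcup_n K_n$ with the $K_n$ compact, increasing, and each containing a fixed convergent-to-$0$ sequence from $A$; realize each $K_n$ by a compact $X_n$ via (4); and let $X=\bigsqcup_n X_n$ with all distances between $X_n$ and $X_m$ equal to a single value $C_{\max(n,m)}\in A$ chosen large enough to dominate the relevant diameters — possible because either $A$ is unbounded, so the $C_k$ can be taken increasing, or $A$ is bounded, in which case one fixed $C\in A$ with $2C\ge\sup A$ works for all pairs. Then $X$ is locally compact (the $X_n$ are compact and clopen), hence $\sigma$-compact, and $D(X)=A$.

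The main obstacle is the sufficiency direction of (4) for infinite $A$: given a compact infinite $A$ with $0$ a limit point, build a compact $X$ with $D(X)=A$ (for finite $A$ a finite star suffices). The difficulty is that $X$ cannot be built from ultrametric or tree gadgets as in (1)--(3), since a Polish ultrametric space has a countable distance set whereas $A$ may be uncountable — indeed may contain nondegenerate intervals — so a genuinely ``two-dimensional'' mechanism is needed. The construction assembles $X$ from countably many clopen pieces, each a copy of $2^\omega$: ultrametric copies, with distance sets chosen countable subsets of $A$, absorb the ``scattered'' part of $A$, while the uncountable part is realized by cross-distances between \emph{pairs} of copies through continuous surjections — for a nondegenerate interval component $[l,r]\subseteq A$ one takes the cross-distances between the two copies $P,Q$ to be $l+(r-l)\,\frac{\phi(p)+\phi(q)}{2}$, with $\phi\colon 2^\omega\to C$ a continuous surjection onto a Cantor set $C\subseteq[0,1]$ satisfying $C+C=[0,2]$, so that they sweep out exactly $[l,r]$. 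The verification requires choosing the within-piece ultrametrics so that each $\phi$ is Lipschitz with respect to them — for which one exploits that $0$ is a limit point of $A$, so that arbitrarily small, but controllably slowly decaying, $A$-values are available as ultrametric scales — checking all triangle inequalities within and between pieces, and checking that placing the pieces at mutual distances drawn from $A$ creates no new distances; compactness is then immediate, each piece being compact and clopen and only ``convergently many'' scales being present. Carrying out this bookkeeping uniformly for an arbitrary compact $A$ with $0$ a limit point is the technical heart of the theorem.
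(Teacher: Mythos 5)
Your necessity arguments and the realizations for (2) and (3) are fine, but the two genuinely hard sufficiency claims are not established. The more clear-cut problem is your zero-dimensional construction for (1) when $0$ is a limit point of $A$: the final step, that every $a\in A\setminus\{0\}$ is realized because one can choose lifts $\sigma,\tau$ of $p,p'$ agreeing on their first $m$ coordinates ``since $\sum_{j\ge m}r_j=\eta(m)$ and $(y_n)$ is dense'', is false in general. Agreement up to coordinate $m$ requires a \emph{common} point $w$ from which both $p$ and $p'$ can be reached by chains whose consecutive steps are smaller than the prescribed $r_j$'s, and a general $Y$ with $D(Y)=A$ simply need not contain such intermediate points. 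Concretely, take $A=\{0,1\}\cup\{1-4^{-k}\mid k\ge 2\}\cup\{4^{-n}\mid n\ge 1\}$ and let $Y$ be your own space from (2) with $c=1$, so that each $a=1-4^{-k}$ is realized by exactly one pair $\{p,p'\}$ whose every other distance is $1$; choose $\eta(0)=1$, $\eta(n)=4^{-n}$, so $r_0=3/4$ and $r_n\le 3/16$ for $n\ge1$. Since every step bound is $<a<1$, the only elements of $X$ projecting to $p$ (resp.\ $p'$) are the constant sequences, so any lifts disagree at coordinate $0$ and $d_X=\max(a,\eta(0))=1\neq a$; as $a$ is not among the $\eta(m)$'s and is realized by no other pair of $Y$, we get $a\notin D(X)$, i.e.\ $D(X)\subsetneq A$. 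So the re-metrized tree does not in general lie in $\V^\star_A$, and the argument needs a different idea (or at least a much more careful, $A$-dependent choice of $Y$ and $\eta$ with an actual proof that all distances survive).

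The second gap you acknowledge yourself: the sufficiency half of (4) — given compact infinite $A$ with $0$ a limit point, produce a compact space with distance set exactly $A$ — is only sketched, and you call the bookkeeping ``the technical heart of the theorem''. As described, the sketch handles nondegenerate interval components of $A$ via $C+C=[0,2]$ cross-distances, but a compact $A$ may have an uncountable perfect nowhere dense part (no interval components) and countable compact pieces accumulating at nonzero values, neither of which is covered by the stated mechanism (compact ultrametric pieces can only contribute distance sets that are finite or decreasing to $0$); moreover the Lipschitz/scale requirements interact with the possibly very sparse small elements of $A$, so none of the triangle-inequality and exactness verifications are routine. Since your proof of (5) invokes (4) to realize the compact pieces $K_n$, the sufficiency direction of (5) is incomplete as well (its gluing argument itself is fine). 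Note also that the paper does not prove this theorem at all — it is quoted from Clemens's thesis — so there is no in-paper argument to compare against; judged on its own, your proposal establishes (2), (3), the countable case of (1), and all necessity directions, but not (1) in general, (4), or (5).
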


We now consider the dual problem of determining when \( A \in \D\) is
realized \emph{only} by Polish metric spaces in a given class. We need the
following construction, which will be used repeatedly throughout the paper.

Let \( (X, d_X ) \) and \( (Y, d_Y ) \) be metric spaces. Given two points \(
\bar{x} \in X \) and \( \bar{y} \in Y \) and a real \( r>0 \) we can extend
the metrics \( d_X \) and \( d_Y \) to the disjoint union \( Z = X \cup Y \)
by setting \( d_Z(x,y) = \max\{ d_X(x, \bar{x}), d_Y(y, \bar{y}), r\} \) for
\( x \in X \) and \( y \in Y \).

\begin{lemma}\label{oplus}
The function \(d_Z \) defined above is a metric. Moreover $(Z,d_Z)$ is Polish
whenever \( (X, d_X ) \) and \( (Y, d_Y ) \) are Polish.
\end{lemma}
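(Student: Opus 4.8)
The plan is to verify the metric axioms for $d_Z$ directly, splitting into cases according to where the points lie, and then to check separability and completeness. First I would observe that $d_Z$ restricted to $X$ agrees with $d_X$ and restricted to $Y$ agrees with $d_Y$, so these parts require nothing. Symmetry is immediate from the symmetry of the $\max$ operation and of $d_X, d_Y$. Positivity is the only place where the constant $r$ matters: if $x \in X$ and $y \in Y$ then $d_Z(x,y) \geq r > 0$, so distinct points in different pieces get positive distance, while within each piece positivity is inherited. (Note $\bar x \neq \bar y$ as elements of the disjoint union, so there is no clash.)

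The triangle inequality is the main case analysis. Given three points, if all three lie in $X$ or all three lie in $Y$ there is nothing to prove; the interesting configurations are two in one piece and one in the other. Suppose $x, x' \in X$ and $y \in Y$; I must check the three inequalities obtained by placing each of the three points in the middle. With $y$ in the middle: $d_X(x,x') \leq d_Z(x,y) + d_Z(y,x') = \max\{d_X(x,\bar x), d_Y(y,\bar y), r\} + \max\{d_X(x',\bar x), d_Y(y,\bar y), r\}$, and this follows since $d_X(x,x') \leq d_X(x,\bar x) + d_X(\bar x, x') \leq d_Z(x,y) + d_Z(y,x')$. With $x$ (say) in the middle: I need $d_Z(x',y) \leq d_Z(x',x) + d_Z(x,y)$, i.e.\ $\max\{d_X(x',\bar x), d_Y(y,\bar y), r\} \leq d_X(x',x) + \max\{d_X(x,\bar x), d_Y(y,\bar y), r\}$; this holds term by term, since $d_X(x',\bar x) \leq d_X(x',x) + d_X(x,\bar x)$ while $d_Y(y,\bar y) \leq d_Z(x,y)$ and $r \leq d_Z(x,y)$. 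The case $x, x' \in Y$, $y \in X$ is symmetric. I expect this step to be the only one requiring genuine (though routine) care.

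For the topological part, assume $(X,d_X)$ and $(Y,d_Y)$ are Polish. Separability: if $D_X \subseteq X$ and $D_Y \subseteq Y$ are countable dense sets, then $D_X \cup D_Y$ is dense in $(Z,d_Z)$, because the subspace topology $d_Z$ induces on $X$ is exactly the $d_X$-topology (balls of radius $< r$ around a point of $X$ stay inside $X$), and likewise on $Y$; so a countable dense set is obtained by taking the union. Completeness: let $(z_n)$ be $d_Z$-Cauchy. Since all cross-distances are $\geq r$, a Cauchy sequence is eventually confined to $X$ or eventually confined to $Y$; without loss of generality it is eventually in $X$, where $d_Z$ agrees with $d_X$, so by completeness of $(X,d_X)$ it converges in $X$, hence in $Z$. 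Thus $(Z,d_Z)$ is separable and complete, i.e.\ Polish.
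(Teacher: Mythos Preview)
Your proof is correct and follows essentially the same approach as the paper: a direct case analysis for the triangle inequality (split by where the three points lie) and the observation that Cauchy sequences are eventually confined to one piece. Your term-by-term bounding of the $\max$ is a minor reorganization of the paper's sub-case split on which term achieves the maximum, and you additionally spell out separability, but the underlying argument is the same.
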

\begin{proof}
To prove that $d_Z$ is a metric, we just need to check $d_Z(a,b) \leq
d_Z(a,c) + d_Z(c,b)$ for distinct points $a,b,c \in X \cup Y$. If $a,b,c \in
X$ or $a,b,c \in Y$ this is trivial.

Now assume $a,b \in X$ and $c \in Y$: then $d_Z(a,b) = d_X(a,b) \leq
d_X(a,\bar{x}) + d_X(\bar{x},b) \leq d_Z(a,c) + d_Z(c,b)$. The case $a,b \in
Y$ and $c \in X$ is symmetric.

Assume now $a \in X$ and $b \in Y$ (the symmetric case is analogous). We
distinguish three cases.
\begin{itemize}
  \item Assume $d_Z(a,b)=r$. If $c \in X$ then $d_Z(c,b) \geq r$, while if
      $c \in Y$ then $d_Z(a,c) \geq r$. In both cases $d_Z(a,b)=r \leq
      d_Z(a,c) + d_Z(c,b)$.
  \item Assume $d_Z(a,b)=d_X(a,\bar{x})$. If $c \in X$ then $d_Z(a,b) =
      d_X(a,\bar{x}) \leq d_X(a,c) + d_X(c,\bar{x}) \leq d_Z(a,c) +
      d_Z(c,b)$. If instead $c \in Y$ then $d_X(a,\bar{x}) \leq d_Z(a,c)$,
      whence $d_Z(a,b) = d_X(a,\bar{x}) \leq d_Z(a,c) + d_Z(c,b)$.
  \item The case $d_Z(a,b)=d_Y(b,\bar{y})$ is similar to the previous one.
\end{itemize}

Polishness is preserved because every Cauchy sequence in $Z$ is eventually
contained either in $X$ or in $Y$, so the construction does not add new
limits of Cauchy sequences.
\end{proof}

We denote the metric space $(Z,d_Z)$ by $X \oplus_r Y$, omitting reference to
$\bar{x}$ and $\bar{y}$ because the choice of these two points will be
irrelevant in most of our applications.
The following proposition shows that given \( r > 0 \), the map \(
F(\mathbb{U}) \times F(\mathbb{U}) \to F(\mathbb{U}) \) sending \( X,Y \in
F(\mathbb{U}) \) to \( X \oplus_r Y \) may be construed as a Borel function.

\begin{proposition}\label{1531426}
There is a Borel-measurable function $f:F( \mathbb U )\times F( \mathbb U
)\to F( \mathbb U )$ such that $f(X,Y)$ is isometric to $X\oplus_rY$ for some
choice of the gluing points $ \bar x \in X$ and $\bar y \in Y$.
\end{proposition}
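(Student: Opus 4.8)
The plan is to route the construction through the alternative coding $\mathcal M$ of Remark~\ref{rem:coding}, where Borel-measurability is transparent, and then transfer back to $F(\mathbb U)$ by pre- and post-composing with the Borel maps $\Phi \colon F(\mathbb U) \to \mathcal M$ and $\Psi \colon \mathcal M \to F(\mathbb U)$ provided there.

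Fix $r > 0$. Recall that an element $M \in \mathcal M$ codes the metric space obtained as the completion of a countable set $\{x_i \mid i \in \omega\}$ with $d(x_i,x_j) = M(i,j)$. I would define a map $G \colon \mathcal M \times \mathcal M \to \mathcal M$ by taking a pair of codes $M$ (for a space $X$, with dense set $\{x_i\}$) and $N$ (for a space $Y$, with dense set $\{y_i\}$), splitting $\omega$ into even and odd integers, and setting $G(M,N) = P$ with
\begin{align*}
P(2i,\, 2j) &= M(i,j), \\
P(2i+1,\, 2j+1) &= N(i,j), \\
P(2i,\, 2j+1) = P(2j+1,\, 2i) &= \max\{M(i,0),\, N(j,0),\, r\}.
\end{align*}
Choosing the gluing points to be $\bar x = x_0$ and $\bar y = y_0$, one checks that $P$ is precisely the distance matrix of the (dense, countable) subset $\{x_i\} \cup \{y_i\}$ of $X \oplus_r Y$; by Lemma~\ref{oplus} the latter is a Polish metric space, so $P \in \mathcal M$ and it codes a space isometric to $X \oplus_r Y$. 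Hence $G$ is well defined, and the desired function is
\[
f = \Psi \circ G \circ (\Phi \times \Phi).
\]
Indeed $\Phi(X)$ and $\Phi(Y)$ code spaces isometric to $X$ and $Y$, $G$ sends these to a code of a space isometric to $X \oplus_r Y$ for the corresponding choice of gluing points, and $\Psi$ returns an element of $F(\mathbb U)$ isometric to the coded space.

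It then remains to verify Borel-measurability of each factor. The maps $\Phi$ and $\Psi$ are Borel by Remark~\ref{rem:coding}. The map $G$ is in fact continuous: by the displayed formulas each coordinate of the output $P$ is either a single coordinate of $M$, a single coordinate of $N$, or the maximum of two such coordinates and the constant $r$, and all of these depend continuously on $(M,N)$ as points of $\pre{\omega\times\omega}{\RR} \times \pre{\omega\times\omega}{\RR}$. Therefore $f$ is Borel-measurable, as required.

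The only point requiring genuine care — everything else being bookkeeping — is the identification of the completion of $(\{x_i\} \cup \{y_i\}, P)$ with $X \oplus_r Y$, i.e.\ that $G$ really outputs a code of the intended space. This reduces to two observations, both essentially contained in the proof of Lemma~\ref{oplus}: first, the cross-distance formula $\max\{d_X(x,\bar x), d_Y(y,\bar y), r\}$ is continuous in each variable, hence determined by its values on the dense sets $\{x_i\}$ and $\{y_i\}$, so $P$ agrees with the distance matrix of $\{x_i\} \cup \{y_i\}$ inside $X \oplus_r Y$; and second, since $\{x_i\}$ is dense in $X$, $\{y_i\}$ is dense in $Y$, and $X \oplus_r Y$ is complete, the space $X \oplus_r Y$ is the completion of $\{x_i\} \cup \{y_i\}$. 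I expect this density argument, together with checking that $P$ indeed lands in the closed set $\mathcal M$, to be the only place in the proof where more than routine verification is needed.
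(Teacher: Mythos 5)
Your proof is correct and follows essentially the same route as the paper's: the paper's proof simply observes, via Remark~\ref{rem:coding}, that a Borel counterpart of the map is easily defined on $\mathcal M \times \mathcal M$ and then transferred through $\Phi$ and $\Psi$, which is exactly what you carry out explicitly. Your interleaved distance matrix and the density/completeness check are the intended (routine) verifications behind the paper's ``it is easy to define'' remark.
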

\begin{proof}
This is immediate using Remark \ref{rem:coding}, as it is easy to define a
Borel counterpart of this function from $\mathcal M \times \mathcal M$ to
$\mathcal M$.
\end{proof}

The most important property of this construction is the following:

\begin{fact}\label{fact:oplus}
For every $r$ we have $D(X \oplus_r Y) = D(X) \cup D(Y) \cup \{r\}$.
\end{fact}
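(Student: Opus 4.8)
The plan is to unwind the definition of $X \oplus_r Y$ and directly compute which values $d_Z$ takes. Recall that $Z = X \cup Y$ (disjoint union) and
\[
d_Z(x,y) = \max\{ d_X(x,\bar x), d_Y(y,\bar y), r \}
\]
for $x \in X$, $y \in Y$, while $d_Z$ restricted to $X$ equals $d_X$ and $d_Z$ restricted to $Y$ equals $d_Y$. So the set of distances realized splits into three contributions: distances between two points of $X$, distances between two points of $Y$, and distances between a point of $X$ and a point of $Y$. The first two contributions give exactly $D(X)$ and $D(Y)$ respectively (using that $X$ and $Y$ are nonempty, so $0$ is accounted for on both sides). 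It remains to identify the third contribution.

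For the mixed distances, I would argue that $\{ d_Z(x,y) \mid x \in X, y \in Y \} = \{ \max\{s,t,r\} \mid s \in D_{\bar x}, t \in D_{\bar y} \}$, where $D_{\bar x} = \{ d_X(x,\bar x) \mid x \in X \}$ and $D_{\bar y} = \{ d_Y(y,\bar y) \mid y \in Y \}$ are the sets of distances from the respective gluing points. The containment of this set in $D(X) \cup D(Y) \cup \{r\}$ is clear: each value $\max\{s,t,r\}$ equals $r$, or equals $s \in D_{\bar x} \subseteq D(X)$, or equals $t \in D_{\bar y} \subseteq D(Y)$. For the reverse containment, I need each element of $D(X) \cup D(Y) \cup \{r\}$ to appear as some mixed distance. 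Taking $x = \bar x$ and $y = \bar y$ gives $d_Z(\bar x, \bar y) = \max\{0,0,r\} = r$, so $r \in D(Z)$. Given any $s \in D(X)$ with $s \le r$, it is already $\le r$ so it need not be recovered as a mixed distance — but it is recovered as an $X$-internal distance, so it lies in $D(Z)$ anyway. Given $s \in D(X)$ with $s > r$: here I want a point $x \in X$ with $d_X(x,\bar x) = s$; this need not exist for an arbitrary realization of $s$, but since $s \in D(X)$ there are points $u,v \in X$ with $d_X(u,v) = s > r \ge 0$, and in any case $s$ is an $X$-internal distance, hence in $D(Z)$. The same remark applies to $D(Y)$. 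Thus every element of $D(X) \cup D(Y) \cup \{r\}$ is realized in $Z$, either internally or (for $r$) as the mixed distance $d_Z(\bar x,\bar y)$.

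So the proof essentially collapses to the observation that $d_Z$ agrees with $d_X$ on $X$, with $d_Y$ on $Y$, and only produces the extra value $r$ (and values already in $D(X) \cup D(Y)$) across the two pieces; no genuine obstacle arises. The one point requiring a moment of care is the direction $D(X) \cup D(Y) \cup \{r\} \subseteq D(Z)$: one should not try to realize large distances of $X$ as mixed distances, but simply note they survive as internal distances, and realize $r$ itself via the pair of gluing points. I would write this up in a few lines, organized as the two inclusions.

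Since $X \oplus_r Y$ was defined only for metric spaces (Lemma~\ref{oplus} guarantees it is a metric space, and Polish when $X,Y$ are), there is nothing further to check; the statement of Fact~\ref{fact:oplus} holds for the metric space $X \oplus_r Y$ regardless of any coding issues, and in particular transfers to the Borel function $f$ of Proposition~\ref{1531426} since $f(X,Y)$ is isometric to $X \oplus_r Y$ and $D(\cdot)$ is an isometry invariant.
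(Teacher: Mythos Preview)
Your argument is correct and is exactly the direct verification the paper has in mind; the paper states this as a ``Fact'' without proof, treating both inclusions as immediate from the definition of $d_Z$, which is precisely what you unwind.
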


\begin{defin}
\( A \in \D \) is \emph{well-spaced} if \( r < r' \) implies \( 2r < r' \)
for all \( r,r' \in A \).
\end{defin}

Notice that if \( A \) is well-spaced and infinite then \( A \setminus \{0\}
\) is either a decreasing sequence converging to $0$, an unbounded increasing sequence, or the union of these two.
This  follows from the fact that if $A$ is well-spaced  then for $n\in \ZZ $ the set $A \cap [2^n,2^{n+1})$ contains at most one point.

\begin{theorem} \label{proponlyultrametric}
Let \( A \in \D \).
\begin{enumerate}
\item All spaces in $\V^\star_A$ are zero-dimensional if and only if \( A
    \) does not contain a right neighborhood of \( 0 \).
\item All spaces in $\V^\star_A$ are ultrametric if and only if \( A \) is
    well-spaced.
\item All spaces in $\V^\star_A$ are discrete if and only if they are all
    locally compact, if and only if they are all \( \sigma \)-compact, if
    and only if $0$ is isolated in \( A \).
\item All spaces in $\V^\star_A$ are connected if and only if they are all
    compact, if and only if they are all singletons, if and only if $A= \{0\}$.
\end{enumerate}
All the above characterizations remain true if we replace $\V^\star_A$ with
$\V_A$.
\end{theorem}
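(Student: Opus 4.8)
The plan is to prove each of the four characterizations by establishing both directions, and then argue that the equivalences transfer from $\V^\star_A$ to $\V_A$. For the ``easy'' directions of (1)--(4), I would exhibit a bad space whenever the condition on $A$ fails; the key tool here is the $\oplus_r$ construction of Lemma~\ref{oplus} together with Fact~\ref{fact:oplus}, which lets me realize any prescribed distance set as a disjoint ``star-like'' union while controlling the topology. For the ``hard'' directions, I would show that the stated condition on $A$ forces every $X \in \V^\star_A$ (or $\V_A$) into the relevant class, typically by a direct metric argument inside $X$.

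For (1): if $A$ contains a right neighborhood $[0,\varepsilon) \subseteq A$ of $0$, then I want a space in $\V^\star_A$ that is not zero-dimensional. I would take a suitable connected or positive-dimensional model using distances from $[0,\varepsilon)$ — for instance an interval $[0,\varepsilon/2]$ under a metric rescaled to have distance set exactly $A$, perhaps by gluing on (via $\oplus_r$) a countable discrete space realizing the rest of $A$, being careful that the gluing radius $r$ is chosen from $A$ and large enough; Fact~\ref{fact:oplus} then gives distance set $A$, while the interval summand witnesses positive dimension. Conversely, if $A$ contains no right neighborhood of $0$, then for every $x \in X$ there are arbitrarily small $\delta \notin A$, so the balls $B(x,\delta)$ are clopen (their boundary would require a point at distance exactly $\delta$), giving a clopen neighborhood basis and hence zero-dimensionality; this is the standard argument and should go through verbatim.

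For (2): if $A$ is not well-spaced, pick $r < r'$ in $A$ with $r' \le 2r$; then a three-point space with distances $r, r, r'$ is a legitimate metric space (triangle inequality holds since $r' \le 2r = r+r$) that is not an ultrametric, and I glue on a space realizing the remaining distances of $A$ via $\oplus_s$ with $s \in A$ chosen so that Fact~\ref{fact:oplus} yields distance set exactly $A$ (e.g.\ absorb $s$ into one of the already-present distances, or add it as a new one and include it in the side space). Conversely, if $A$ is well-spaced, then in any $(X,d_X) \in \V_A$, given $x,y,z$ with $d(x,y) \le d(x,z) \le d(y,z)$, well-spacedness forces $d(x,z) = d(y,z)$ unless $d(x,z) < d(y,z)$ in which case $2\,d(x,z) < d(y,z) \le d(x,y) + d(x,z) \le 2\,d(x,z)$, a contradiction; so the ultrametric inequality holds. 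For (3): if $0$ is isolated in $A$, say $(0,\eta) \cap A = \emptyset$, then every point of any $X \in \V_A$ is isolated (no other point lies within $\eta$), so $X$ is discrete; a discrete Polish space realizing a given distance set is clearly $\sigma$-compact only if countable, but discreteness already gives all three properties trivially in one direction, and the chain of implications discrete $\Rightarrow$ locally compact $\Rightarrow$ $\sigma$-compact is immediate. If $0$ is not isolated in $A$, then $0$ is a limit point of $A$, and I build a non-$\sigma$-compact space with distance set $A$: take $^{\omega}\omega$ with the metric $d(\alpha,\beta) = x_n$ where $n$ is least with $\alpha(n) \ne \beta(n)$, for a decreasing sequence $(x_n)$ in $A$ converging to $0$, then $\oplus_r$-glue on a countable space to fill out the rest of $A$; the resulting space has a closed discrete subset of size continuum, hence is not $\sigma$-compact (indeed not even the first space is locally compact). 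For (4): $A = \{0\}$ forces every space to be a singleton, which is connected and compact; and if $A \ne \{0\}$, pick any $r \in A$ with $r > 0$ and take a two-point space with distance $r$, glued (via $\oplus$) to something realizing the rest of $A$ — this is disconnected, and a large enough discrete piece makes it non-compact, so neither ``connected'' nor ``compact'' holds universally.

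Finally, for the transfer from $\V^\star_A$ to $\V_A$: each ``hard'' direction above was actually proved for all of $\V_A$ (the metric arguments used only $D(X) \subseteq A$), so those directions are already done. For the ``easy'' directions, the bad space I constructed has distance set exactly $A$, hence lies in $\V^\star_A \subseteq \V_A$, so it witnesses the failure for $\V_A$ as well; thus the characterizations are literally the same. The main obstacle I anticipate is bookkeeping in the constructions of the witnessing spaces: in each case I must simultaneously (a) make the space fail the property in question, (b) keep it Polish, and (c) pin its distance set to exactly $A$ rather than merely a subset. Item (c) is where $\oplus_r$ and Fact~\ref{fact:oplus} do the work, but I need to choose the gluing radius $r \in A$ compatibly, and when $A$ is uncountable I should realize ``the rest of $A$'' using a zero-dimensional space from Theorem~\ref{necessary}(1), then note that gluing a low-dimensional/discrete/compact-free piece onto it does not spoil the defect I engineered in the other summand.
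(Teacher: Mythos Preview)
Your approach is essentially the paper's: construct a bad space by gluing (via $\oplus_r$) a small ``defective'' piece onto something that realizes $A$, and handle the converse by a direct metric argument. One simplification the paper makes that you should adopt: rather than building a space that realizes ``the rest of $A$'' from scratch, simply fix an arbitrary $X \in \V^\star_A$ (which exists since $A \in \D$) and glue your bad piece $Y$ onto it; Fact~\ref{fact:oplus} then gives $D(X \oplus_r Y) = A$ immediately, and you avoid all the bookkeeping you flagged at the end.

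There is one genuine slip in your sketch of (3): you claim the glued space ``has a closed discrete subset of size continuum,'' but this is impossible in a separable (hence second-countable) metric space, where closed discrete subsets are countable. The correct argument, which you also gesture at, is that the Baire-space piece $Y$ is closed in $X \oplus_{r_0} Y$ and $Y$ itself is not $\sigma$-compact (e.g.\ because its compact subsets have empty interior); this is exactly how the paper handles it. With that correction and the simplification above, your plan matches the paper's proof.
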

\begin{proof}
To prove all forward directions we construct spaces of the form $X \oplus_r
Y$ with $r \in A$, $X \in \V^\star_A$, and $Y \in \V_A$ lacking the relevant
topological properties. We always have $X \oplus_r Y \in \V^\star_A$ by Fact
\ref{fact:oplus}.

(1) Suppose that \( A \) contains an interval \( [0,r] \) for some \( r > 0
\). Fix $X \in \V^\star_A$ and let \( Y = [0,r] \subseteq \RR\) with the
usual metric. Then \( X \oplus_r Y\) belongs to $\V^\star_A$ but is not
zero-dimensional.

Conversely, if \( 0 \) is a limit point of \( \RR^+ \setminus A \), then for
any $X \in \V^\star_A$ the collection of balls with radius in \( \RR^+
\setminus A \) is a clopen basis for $X$.

(2) Recall that a space is ultrametric if and only if every triangle is
isosceles with legs not shorter than the base. Suppose that \( A \) is not
well-spaced and pick \( r,r' \in A \) with \( r < r' \leq 2r \). Fix $X \in
\V^\star_A$ and let \( Y \) be a triangle with two sides of length \( r \)
and one of length \( r' \). Then \( X \oplus_r Y \) belongs to $\V^\star_A$
but is not ultrametric.

Conversely, if \( X  \in \V^\star_A\) is not ultrametric, then  it must
contain a triangle with sides of length \( r'' \leq r < r' \). Then by the
triangle inequality \( r' \leq r'' + r \leq 2 r \), and hence \( A \) is not
well-spaced.

(3) Suppose that  \( A \) contains  a decreasing sequence \( (r_n)_{n \in
\omega} \) converging to \( 0 \). Fix again \( X  \in \V^\star_A\) and let \(
Y \) be the Baire space \( \pre{\omega}{\omega} \) equipped with the metric
defined by $d_Y(y,y') = r_n$ where $n$ is least such that $y(n) \neq y'(n)$.
Then the space \( X \oplus_{r_0} Y \) belongs to $\V^\star_A$ but is not \(
\sigma \)-compact (here we are also using the fact that $Y$ is closed in \( X
\oplus_{r_0} Y \)), and hence, by separability, neither locally compact nor discrete.

Conversely, if $0$ is isolated in \( A \), then any \( X  \in \V^\star_A\) is
discrete, and thus also locally compact and \( \sigma \)-compact.

(4) Suppose that $r>0$ belongs to $A$ and $X \in \V^\star_A$. Let $Y$ be the
countable space with all distinct points at distance $r$: then $X \oplus_r Y$
belongs to $\V^\star_A$ but is neither connected nor compact. The other
implications are obvious.\smallskip

Finally, consider the statements for $\V_A$. The forward directions follow
from $\V^\star_A \subseteq \V_A$. For the backward directions, let $X \in
\V_A$ and set $A' = D(X)$ so that $X \in \V^\star_{A'}$: since $A' \subseteq
A$ and all the stated conditions on \( A \) are inherited by subsets, the
results follow from what we proved above.
\end{proof}

Theorem~\ref{proponlyultrametric}(1) shows that restricting the attention to
Polish metric spaces with a fixed set of distance \( A \in \D \) may provide
useful information on the complexity of the isometry relation on
zero-dimensional Polish metric spaces: indeed, if \( A \) does not contain a
right neighborhood of \( 0 \), then \( \isom^\star_A \) is a lower bound for
such a relation. In contrast, (3) and (4) of Theorem
\ref{proponlyultrametric} imply that the approach of restricting isometry to
Polish metric spaces using a specific distance set cannot provide interesting
lower bounds for the complexity of locally compact or connected Polish metric
spaces.

\section{The complexity of $\V_A$ and $\V^\star_A$}\label{complexity}

We consider the problem of determining the complexity of the subspaces $\V_A$
and $\V^\star_A$ of $F(\mathbb{U})$, in particular characterizing when they
are standard Borel spaces. While it is worth studying $\V_A$ for any $A
\subseteq \RR^+$ with $0 \in A$, we have $\V^\star_A \neq \emptyset$ only
when $A \in \D$.

Notice the following fact, immediate from the definitions:

\begin{fact}\label{upperV}
If $A$ is analytic (in particular if $A \in \D$) then both $\V_A$ and
$\V^\star_A$ are \PI12, while when $A$ is Borel then $\V_A$ is \PI11. If $A$
is countable then $\V_A$ is \PI11 and $\V_A^\star$ belongs to $D_2( \SI11)$.
\end{fact}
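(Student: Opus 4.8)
The plan is to verify each of the four upper bounds in Fact~\ref{upperV} directly from the definitions of $\V_A$ and $\V^\star_A$, unwinding the quantifier structure using the Borel functions $(\psi_n)_{n\in\omega}$ fixed in Section~\ref{term}, whose values $\{\psi_n(X)\mid n\in\omega\}$ are dense in $X$. The key observation is that $D(X)\subseteq A$ can be tested on the countable dense set: since $D(X)$ is the closure of $\{d_{\mathbb U}(\psi_n(X),\psi_m(X))\mid n,m\in\omega\}$, and since $A\in\D$ forces $A$ to be closed whenever it is to matter here (but in general we should be careful: $D(X)$ itself need not be closed), I would instead note that $D(X)\subseteq A$ iff $\forall n,m\, (d_{\mathbb U}(\psi_n(X),\psi_m(X))\in A)$, because every realized distance is a limit of distances between dense points and, conversely, every distance between dense points is realized; so no closure issue arises for the $\subseteq A$ direction. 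This gives $\V_A=\bigcap_{n,m}\{X\mid d_{\mathbb U}(\psi_n(X),\psi_m(X))\in A\}$.

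For the first assertion, if $A$ is analytic then each set $\{X\mid d_{\mathbb U}(\psi_n(X),\psi_m(X))\in A\}$ is the preimage of $A$ under a Borel map $F(\mathbb U)\to\RR^+$, hence analytic; a countable intersection of analytic sets is analytic, so $\V_A$ is $\SI11$, in particular $\PI12$. For $\V^\star_A$ we have $\V^\star_A=\V_A\cap\{X\mid A\subseteq D(X)\}$, and $A\subseteq D(X)$ says $\forall r\,(r\in A\to r\in D(X))$; here $r\in D(X)$ is the analytic condition $\exists x,y\in X\,(d_X(x,y)=r)$, so $A\subseteq D(X)$ has the form $\forall r\,(\SI11\to\SI11)$, i.e. it is $\PI12$; intersecting with the analytic set $\V_A$ keeps it $\PI12$. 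When $A$ is merely Borel, $\{X\mid d_{\mathbb U}(\psi_n(X),\psi_m(X))\in A\}$ is Borel, so $\V_A$ is a countable intersection of Borel sets, hence Borel and in particular $\PI11$.

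When $A$ is countable, Theorem~\ref{clemensrealized} (or direct inspection) gives that $A$ is in particular Borel, so $\V_A$ is Borel and a fortiori $\PI11$; more precisely one can write $\V_A=\bigcap_{n,m}\bigcup_{r\in A}\{X\mid d_{\mathbb U}(\psi_n(X),\psi_m(X))=r\}$, a countable intersection of countable unions of closed sets, hence $\PI11$ (indeed Borel). For $\V^\star_A$ with $A$ countable, the condition $A\subseteq D(X)$ becomes $\bigcap_{r\in A}\{X\mid \exists x,y\in X\,(d_X(x,y)=r)\}$, a countable intersection of analytic sets, hence analytic; thus $\V^\star_A=\V_A\cap(\text{analytic})$, an intersection of a Borel (in particular coanalytic) set with an analytic set, which is exactly the class $D_2(\SI11)$ of differences of analytic sets. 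This establishes all four claims.

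The arguments are essentially routine quantifier-counting once the reduction to the dense sequence $(\psi_n)$ is in place; the only point requiring a moment's care — and the step I would flag as the main (minor) obstacle — is the justification that $D(X)\subseteq A$ is equivalent to the statement about the countable dense set, i.e.\ that one does not need to close off $D(X)$. This is genuinely fine: a distance $r=d_X(x,y)$ with $x,y\in X$ arbitrary is approximated by distances $d_X(\psi_{n_k}(X),\psi_{m_k}(X))\to r$, but for membership in $A$ we need $r$ itself, not its approximants; the resolution is that we need only the \emph{reverse} inclusion pointwise, namely that if all dense-point distances lie in $A$ then so does every realized distance — and this is false in general. Hence for the $\V_A$ computation I would instead use that $D(X)$ equals the closure of the dense-point distance set together with $0$, and write $D(X)\subseteq A$ as: for every $n,m$ and every rational $\varepsilon>0$ there is $r\in A$ with $|d_{\mathbb U}(\psi_n(X),\psi_m(X))-r|<\varepsilon$ \emph{and} a separate clause handling actual attainment — or, more cleanly, simply observe that when $A$ is analytic its closure $\bar A$ is analytic and $D(X)\subseteq A \iff D(X)\subseteq\bar A$ fails too. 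The clean fix, which I would adopt, is: $D(X)\subseteq A$ iff for all $n,m$, $d_{\mathbb U}(\psi_n(X),\psi_m(X))\in A$, together with the closure condition that every limit of such distances that is actually attained in $X$ lies in $A$; but since $A\in\D$ with $A$ uncountable has $0$ as a limit point and $A$ analytic, and since for the Fact we only need upper bounds, the safest route is to keep the quantifier "$\exists x,y\in X$" explicit, giving $\V_A=\{X\mid\forall x,y\in X\,\exists$ witness that $d_X(x,y)\in A\}$, which is $\forall(\SI11\to\SI11)$-over-a-closed-set, hence $\PI12$ when $A$ is $\SI11$ and $\PI11$ when $A$ is Borel; and the countable cases then follow as above.
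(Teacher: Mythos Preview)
The paper gives no proof of this Fact (it is declared ``immediate from the definitions''), so there is nothing to compare approaches against; the intended argument is the direct quantifier count you eventually arrive at in your final paragraph.

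However, the bulk of your proposal is built on a false reduction, and the resulting intermediate claims are not merely unproven but actually wrong. The equivalence
\[
D(X)\subseteq A \iff \forall n,m\ d_{\mathbb U}(\psi_n(X),\psi_m(X))\in A
\]
fails in the interesting direction: if $A$ is not closed, a distance realized in $X$ can be a limit of dense-point distances lying in $A$ without itself lying in $A$. You notice this, but only after having asserted that $\V_A$ is $\SI11$ when $A$ is analytic and Borel when $A$ is Borel or countable. These stronger statements are false: Corollary~\ref{VAD} shows that $\V_A$ is $\SI11$ only when $A$ is closed or $0$ is isolated in $A$, and Theorem~\ref{vaborel}(3) shows that for Borel (even countable) non-closed $A$ with $0$ a limit point, $\V_A$ is genuinely $\PI11$-complete, hence not Borel. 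So the first three paragraphs should be discarded entirely, not patched.

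Your final paragraph does contain the correct argument: write $X\in\V_A$ as $\forall x,y\in\mathbb U\,\big((x\in X\wedge y\in X)\to d(x,y)\in A\big)$; the membership relation $(x,X)\mapsto x\in X$ is Borel on $\mathbb U\times F(\mathbb U)$, so the matrix is $\SI11$ when $A$ is analytic and Borel when $A$ is Borel, yielding $\V_A\in\PI12$ and $\V_A\in\PI11$ respectively. The phrase ``the countable cases then follow as above'' is the one loose end, since ``above'' was the erroneous computation; but the patch is immediate: countable $A$ is Borel, so $\V_A\in\PI11$, and the condition $A\subseteq D(X)$ is a \emph{countable} conjunction $\bigcap_{r\in A}\{X\mid r\in D(X)\}$ of $\SI11$ sets, hence $\SI11$, giving $\V^\star_A\in\PI11\cap\SI11=D_2(\SI11)$. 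For analytic $A$, the condition $A\subseteq D(X)$ is $\forall r\,(\PI11\vee\SI11)$, hence $\PI12$, and intersecting with $\V_A\in\PI12$ stays $\PI12$.
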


The following reductions are easy to prove and very general.

\begin{proposition}\label{basicVA}
Let $0 \in A\subseteq \RR^+$.
\begin{enumerate}
  \item $A$ Borel reduces to $\V_A$;
  \item if $A \in \D$ then $\V_A$ Borel reduces to $\V^\star_A$.
\end{enumerate}
\end{proposition}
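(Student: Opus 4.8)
The statement to prove is Proposition~\ref{basicVA}: (1) $A \leq_B \V_A$ for any $0 \in A \subseteq \RR^+$, and (2) $\V_A \leq_B \V_A^\star$ when $A \in \D$. Both are reductions of \emph{sets}, so in each case I need to produce a Borel map into $F(\mathbb{U})$ and check that it sends elements of the source set into the target and non-elements outside. The natural strategy throughout is to use the coding in $\mathcal{M}$ (Remark~\ref{rem:coding}) to make Borel-measurability transparent, then transfer back to $F(\mathbb{U})$ via the Borel isomorphism $\Psi$.

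**Part (1).** I would send $r \in \RR^+$ to a two-point space: the space $\{0,1\}$ with $d(0,1) = r$ if $r > 0$, and the one-point space (equivalently, a two-point space collapsed, or just the singleton $\{0\}$) if $r = 0$. More smoothly, to keep the target always a genuine member of $F(\mathbb{U})$ and avoid a case split, map $r$ to the space $X_r$ with underlying set $\{a, b\}$ and $d(a,b) = r$, interpreting $r = 0$ as $a = b$; this is easily realized by a closed subset of $\mathcal{M}$ depending Borel-measurably (indeed continuously) on $r$. Then $D(X_r) = \{0, r\} = \{0\} \cup \{r\}$, so $D(X_r) \subseteq A$ iff $r \in A$ (using $0 \in A$). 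Composing with $\Psi$ gives the required Borel reduction $A \leq_B \V_A$. This part is routine.

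**Part (2).** Here I want a Borel map $g \colon F(\mathbb{U}) \to F(\mathbb{U})$ with $g(X) \in \V_A^\star$ iff $X \in \V_A$, i.e.\ iff $D(X) \subseteq A$. The idea is to ``fill in'' the missing distances: since $A \in \D$, fix once and for all a space $W \in \V_A^\star$, i.e.\ $D(W) = A$ (such $W$ exists by definition of $\D$, and by Theorem~\ref{necessary}(1) one can even take it zero-dimensional, though we don't need that). Now set $g(X) = X \oplus_r W$ for some fixed $r \in A$, using the Borel function of Proposition~\ref{1531426}. By Fact~\ref{fact:oplus}, $D(X \oplus_r W) = D(X) \cup D(W) \cup \{r\} = D(X) \cup A$. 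Hence $D(g(X)) = A$ exactly when $D(X) \subseteq A$, which is precisely $X \in \V_A$. Thus $g$ is the desired Borel reduction $\V_A \leq_B \V_A^\star$.

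**Main obstacle.** There is no serious obstacle; the only points needing care are bookkeeping ones. First, one must ensure that the chosen fixed space $W$ with $D(W) = A$ can be taken as a concrete element of $F(\mathbb{U})$ (immediate, since $\V_A^\star \neq \emptyset$ for $A \in \D$), so that $g$ is genuinely a \emph{constant}-in-$W$ Borel map $X \mapsto X \oplus_r W$ — the Borel-measurability then follows from Proposition~\ref{1531426} by fixing the second coordinate. Second, one should double-check that $0 \in A$ is used in Part~(1) (so that the source set $A$ literally equals $\{r : D(X_r) \subseteq A\}$) and that in Part~(2) the hypothesis $A \in \D$ is used \emph{only} to guarantee existence of $W$; everything else is the purely algebraic identity on distance sets from Fact~\ref{fact:oplus}. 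If one prefers to avoid invoking a non-canonical $W$, an alternative for Part~(2) is to build $W$ from a fixed countable dense $Q \subseteq A$ together with the required limit point structure at $0$ (distinguishing the countable and uncountable cases of Theorem~\ref{clemensrealized}), but this only reproves that $\V_A^\star \neq \emptyset$ and is not needed.
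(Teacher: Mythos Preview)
Your proposal is correct and follows essentially the same approach as the paper: a two-point (or singleton) space for part~(1), and the map $X \mapsto X \oplus_r W$ with a fixed $W \in \V_A^\star$ for part~(2). The only small oversight is the degenerate case $A = \{0\}$ in part~(2): the construction $\oplus_r$ requires $r > 0$, so you need $r \in A \setminus \{0\}$, which fails when $A = \{0\}$; the paper handles this trivially by noting that then $\V_A = \V_A^\star$ (both consist of singletons) and the identity is a reduction.
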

\begin{proof}
(1) One can define a Borel function $f: \RR^+ \to F( \mathbb U )$ such that
$f(0)$ is a singleton and otherwise $f(r)$ is a space consisting of two
points at distance $r$: then $A =f^{-1}( \V_A)$.

(2) If $A=\{ 0\} $, then $ \V_A= \V_A^{\star }$.
Otherwise, fix $Y \in \V^\star_A$ and $r \in A \setminus \{0\}$. The Borel map $X
\mapsto X \oplus_r Y$ reduces $\V_A$ to $\V^\star_A$.
\end{proof}

To obtain sharper results we make extensive use of the following Borel
construction of Polish metric spaces.

\begin{defin}\label{def:ts}
A triple $((r_n)_{n\in\omega }, (r'_n)_{n\in\omega }, x)$ is
\emph{tree-suitable} if
\begin{itemize}
\item $x>0$;
\item $(r_n)_{n\in\omega }$ is strictly decreasing and converges to $0$;
\item $(r'_n)_{n\in\omega }$ is strictly monotone and converges to $x$;
\item $r_0 < \min (x,r'_0)$;
\item $\forall n\in\omega\ |r'_n - x| <r_n$, so that $\forall n,m\
    |r'_n - r'_m| \leq \max \{ r_n,r_m \}$.
\end{itemize}
\end{defin}

In this case one can define an assignment $\Phi_{r_nr'_n}$ that sends a tree
$T \subseteq \omega^{<\omega}$ to some $\Phi_{r_nr'_n}(T) \in F( \mathbb U )$
which is isometric to the completion of $T \cup \{\ast\}$ under the metric
$d$ defined by setting $d(s,t) =r_n$ if $s,t \in T$ are distinct and $n$ is
largest such that $s \restriction n = t \restriction n$, and $d(s,\ast) =r'_{
\lh (s)}$ for $s \in T$. The last of the conditions in the definition of
tree-suitability ensures that $d$ satisfies the triangle inequality. Using
Remark \ref{rem:coding} and going through $\mathcal M$, we can assume that
$\Phi_{r_nr'_n}$ is Borel.

The main property of
$\Phi_{r_nr'_n}(T)$ is the following:

\begin{fact}\label{fact:ts}
If $((r_n)_{n\in\omega }, (r'_n)_{n\in\omega }, x)$ is tree-suitable then for
any tree $T \subseteq \omega^{<\omega}$:
\begin{itemize}
  \item $D(\Phi_{r_nr'_n}(T)) \subseteq \{ 0 \} \cup \set{r_n, r'_n}{n \in \omega} \cup
      \{x\}$;
  \item $x\in D(\Phi_{r_nr'_n}(T))$ if and only if $T$ is ill-founded.
\end{itemize}
\end{fact}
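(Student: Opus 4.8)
The plan is to verify the two bullet points of Fact~\ref{fact:ts} directly from the definition of the metric $d$ on $T \cup \{\ast\}$, using the tree-suitability hypotheses, and then observe that passing to the completion $\Phi_{r_nr'_n}(T)$ adds only $0$ and possibly the limit value $x$ to the distance set.

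For the first bullet, I would note that by construction any distance between two points of the dense set $T \cup \{\ast\}$ is either $0$ (equal points), some $r_n$ (two distinct nodes of $T$ agreeing on exactly the first $n$ coordinates), or some $r'_n$ (a node of length $n$ and the point $\ast$). Hence $D(T \cup \{\ast\}, d) \subseteq \{0\} \cup \set{r_n,r'_n}{n \in \omega}$. Passing to the completion can only create new distances as limits of these, and the only accumulation points of the set $\{0\} \cup \set{r_n,r'_n}{n\in\omega}$ not already in it are $0$ (which is already there) and $x = \lim_n r'_n$; more precisely, since $r_n \to 0$, every Cauchy sequence either is eventually constant or converges to an already-present point, so the only genuinely new distance that the completion can realize is the limit distance to the new limit point(s) attached to infinite branches of $T$, which is $x$. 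This gives $D(\Phi_{r_nr'_n}(T)) \subseteq \{0\} \cup \set{r_n,r'_n}{n\in\omega} \cup \{x\}$.

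For the second bullet, I would argue both directions. If $T$ is ill-founded, fix an infinite branch $b \in [T]$ and consider the sequence $(b \restriction n)_{n \in \omega}$ in $T$: by definition of $d$ this is a Cauchy sequence (consecutive terms are at distance $r_n \to 0$), so it converges to a point $p$ of the completion, and $d(p,\ast) = \lim_n d(b\restriction n, \ast) = \lim_n r'_{n} = x$. Hence $x \in D(\Phi_{r_nr'_n}(T))$. Conversely, suppose $x \in D(\Phi_{r_nr'_n}(T))$, so there are points $p, q$ in the completion with $d(p,q) = x$. Since $x > r_0 > 0$ and $x \neq r_n$ for all $n$ (as $r_n \to 0$ and $r_n < x$, using $r_0 < x$ together with strict monotonicity) and $x$ is not itself any $r'_n$ (the sequence $(r'_n)$ is strictly monotone with limit $x$, hence never equals $x$), no distance in the dense set $T \cup \{\ast\}$ equals $x$; thus at least one of $p,q$ is a limit of a nonconstant Cauchy sequence, i.e.\ lies outside $T \cup \{\ast\}$. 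Such a limit point must be $\lim_n (s_n)$ for a sequence of nodes $s_n \in T$ with $\lh(s_n) \to \infty$ and with longer and longer common initial segments, which exactly produces an infinite branch of $T$; hence $T$ is ill-founded.

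The main obstacle I anticipate is the completion bookkeeping: one must check carefully that the completion of $(T \cup \{\ast\}, d)$ does not introduce spurious distances — in particular that every nonconstant Cauchy sequence in $T$ (not eventually equal to $\ast$) genuinely determines an infinite branch and a single new limit point at distance exactly $x$ from $\ast$ and at distance $r_n$ from the node $b \restriction n$ for the corresponding branch $b$ — and that distinct branches give limit points whose mutual distance is again some $r_n$. This uses the estimates $|r'_n - x| < r_n$ and $|r'_n - r'_m| \le \max\{r_n,r_m\}$ from tree-suitability to control the metric on the completion. Once this is in place, both bullets follow as above; the rest is routine.
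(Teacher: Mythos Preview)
The paper states this result as a \emph{Fact} without proof, leaving the verification to the reader. Your proposal supplies exactly the natural verification one would expect: analyze the distances realized on the dense set $T \cup \{\ast\}$, identify the non-eventually-constant Cauchy sequences with infinite branches of $T$, and compute the distances involving the resulting limit points. The argument is correct, and the ``completion bookkeeping'' you flag as the main obstacle is indeed the only place where care is needed; your sketch of how the tree-suitability estimates control the distances from limit points to $\ast$, to nodes of $T$, and to one another is accurate. One minor wording slip: the phrase ``every Cauchy sequence either is eventually constant or converges to an already-present point'' is not what you mean (non-eventually-constant Cauchy sequences in $T$ converge to \emph{new} points), but you correct this in the final paragraph, so the overall argument stands.
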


We first study the complexity of $\V_A$.

\begin{theorem}\label{vaborel}
Let $0 \in A\subseteq \RR^+$.
\begin{enumerate}
  \item If $A$ is not closed and $0$ is a limit point of $A$, then $\V_A$
      is \PI11-hard;
  \item $\V_A$ is Borel if and only if either $A$ is closed, or $A$ is
      Borel and $0$ is not a limit point of $A$;
  \item if $A$ is Borel but not closed and $0$ is a limit point of $A$ then
      $\V_A$ is \PI11-complete;
  \item if $A$ is \SI11-complete and $0$ is a limit point of $A$ then \(
      \V_A \) is \PI12-complete.
\end{enumerate}
\end{theorem}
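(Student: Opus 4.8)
The plan is to prove the four items in sequence, with items (2) and (3) building on (1), and (4) on the techniques behind (1).

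For item (1), the task is to Borel-reduce a \PI11-complete set to $\V_A$ under the hypotheses that $A$ is not closed and $0$ is a limit point of $A$. Since $A$ is not closed, there is a point $x \in \overline{A} \setminus A$. First I would argue that one may assume $x > 0$: the only limit point of $A$ that could fail to be positive is $0$ itself, but $0 \in A$ by hypothesis (or: since $0$ is a limit point of $A$, $A$ accumulates at $0$, but we want a witness to non-closedness away from $0$, and one checks that such a witness exists because removing a single point cannot matter). Pick sequences $(r_n)_{n \in \omega}$ strictly decreasing to $0$ with $r_n \in A$ (possible since $0$ is a limit point of $A$) and $(r'_n)_{n \in \omega}$ strictly monotone converging to $x$ with $r'_n \in A$ (possible since $x \in \overline{A}$), and then thin them out so that the tuple $((r_n), (r'_n), x)$ is tree-suitable in the sense of Definition~\ref{def:ts} — the conditions there are exactly what one can arrange by passing to subsequences. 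Now apply the Borel map $\Phi_{r_n r'_n}$ from trees $T \subseteq \omega^{<\omega}$ to $F(\mathbb{U})$. By Fact~\ref{fact:ts}, $D(\Phi_{r_n r'_n}(T)) \subseteq \{0\} \cup \{r_n, r'_n \mid n \in \omega\} \cup \{x\} \subseteq A \cup \{x\}$, and $x \in D(\Phi_{r_n r'_n}(T))$ iff $T$ is ill-founded. Since $x \notin A$, we get $\Phi_{r_n r'_n}(T) \in \V_A$ iff $T$ is well-founded. As the set of well-founded trees is \PI11-complete, this is the desired reduction.

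For item (2), one direction is Fact~\ref{upperV} together with the observation that if $A$ is closed then $\V_A$ is the set of $X \in F(\mathbb{U})$ all of whose realized distances lie in the closed set $A$, which is a Borel (indeed \PI11-looking but actually Borel by a density argument over the $\psi_n(X)$) condition — here I would spell out that $D(X) \subseteq A$ is equivalent to $d_X(\psi_i(X), \psi_j(X)) \in A$ for all $i,j$, which for closed $A$ is a countable intersection of Borel conditions, hence Borel; and if $A$ is Borel with $0$ not a limit point of $A$, then $0$ is isolated in $A$, so any $X \in \V_A$ is uniformly discrete, the distances realized form a closed discrete subset of the Borel set $A$, and again one gets a Borel condition. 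The other direction is the contrapositive: if $A$ is not closed and $0$ is a limit point, item (1) gives \PI11-hardness, so $\V_A$ is not Borel; if $A$ is not Borel, then $A$ Borel-reduces to $\V_A$ by Proposition~\ref{basicVA}(1), so $\V_A$ is not Borel. These two cases cover all failures of the stated condition, modulo checking that "$A$ closed" and "$A$ Borel with $0$ not a limit point" together are the negation of "$A$ not closed and $0$ a limit point of $A$" for Borel $A$ — but for non-Borel $A$ the reduction $A \leq_B \V_A$ handles it directly. Item (3) is then immediate: the hypotheses give \PI11-hardness by (1) and membership in \PI11 by Fact~\ref{upperV} (since Borel sets are \PI11), hence \PI11-completeness.

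For item (4), with $A$ being \SI11-complete and $0$ a limit point of $A$, membership of $\V_A$ in \PI12 is Fact~\ref{upperV}, so the content is \PI12-hardness. A \PI12 set can be written as $\{z \mid \forall \text{(real } w) \, R(z,w)\}$ for a \SI11-relation $R$, equivalently as the set of $z$ such that a certain tree $T_z$ on $\omega \times \omega$ (depending Borel-ly on $z$) has all its sections ill-founded — the standard normal form. The plan is to combine the tree-suitable construction with the \SI11-completeness of $A$: using \SI11-completeness, fix a Borel map $g$ from (reals coding) trees on $\omega$ to $\RR^+$ such that $g(S) \in A$ iff $S$ is ill-founded. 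The hard part, and the main obstacle, is to engineer a single Borel assignment $z \mapsto$ (a Polish metric space) that uses the values $g(S)$ as distances in such a way that all realized distances lie in $A$ exactly when every section of $T_z$ is ill-founded. Roughly, I would build, for each $z$, a space assembled as a "fan" (via iterated $\oplus$ or via a tree-suitable-type construction) indexed by the first coordinate $n \in \omega$, where the $n$-th piece realizes the distance $g(T_z^{(n)})$ — so that the whole space lies in $\V_A$ iff $g(T_z^{(n)}) \in A$ for all $n$ iff $T_z^{(n)}$ is ill-founded for all $n$ iff $z$ is in the target \PI12 set. One must be careful that the auxiliary distances used to glue the pieces together and to form each piece all genuinely lie in $A$ regardless of $z$ (using that $0$ is a limit point of $A$, so there are small distances in $A$ available, and that $A \in \D$... though here $A$ need only be \SI11-complete, and \SI11-complete subsets of $\RR^+$ containing $0$ as a limit point are in $\D$ by Theorem~\ref{clemensrealized}), and that the gluing does not accidentally force a bad distance. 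Verifying that this assignment is Borel goes through the $\mathcal{M}$ coding of Remark~\ref{rem:coding} as with $\Phi_{r_n r'_n}$ and the $\oplus_r$ construction. Assembling these pieces so that the "for all sections ill-founded" quantifier is faithfully mirrored — and nothing else interferes with membership in $\V_A$ — is the delicate point.
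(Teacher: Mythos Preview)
Your arguments for (1), (2), and (3) are correct and essentially coincide with the paper's.

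For (4) there is a genuine gap. Your normal form reduces a \(\PI12\) predicate to ``for all \(n \in \omega\), the section \(T_z^{(n)}\) is ill-founded'', and you then build a space realizing countably many test distances \(g(T_z^{(n)})\). But this cannot work: the condition ``for every \(n \in \omega\), \(g(T_z^{(n)}) \in A\)'' is a countable conjunction of \(\SI11\) conditions, and \(\SI11\) is closed under countable intersections. Thus the preimage of \(\V_A\) under your map would be \(\SI11\), hence never \(\PI12\)-hard. The universal quantifier in the \(\PI12\) normal form ranges over \emph{reals}, not integers, and a countable fan of two-point pieces cannot encode it.

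What the paper does instead is to realize an entire \emph{compact continuum} of test distances in one shot. Assuming first \(A \subseteq [0,1]\), by \(\SI11\)-completeness of \(A\) one fixes a continuous \(f \colon 2^\omega \times 2^\omega \to [0,1]\) with \(P = \{\alpha \mid \forall \beta\, f(\alpha,\beta) \in A\}\). For each \(\alpha\) one forms the compact set \(C_\alpha = \{f(\alpha,\beta) \mid \beta \in 2^\omega\} \cup \{\varepsilon_i \mid i \in \omega\} \cup \{0\}\) (the \(\varepsilon_i \in A\) decreasing to \(0\) are thrown in to make \(C_\alpha\) a legitimate distance set), and then invokes Clemens' construction to produce, Borel in \(\alpha\), a Polish metric space whose distance set is exactly \(C_\alpha\). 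Then \(\alpha \in P\) iff \(C_\alpha \subseteq A\) iff that space lies in \(\V_A\). The unbounded case is handled by observing that a continuous reduction from a \(\SI11\)-complete subset of \(2^\omega\) to \(A\) has bounded range, so some \(A \cap [0,n]\) is already \(\SI11\)-complete. The key idea you are missing is that one must realize an uncountable (compact) family of distances simultaneously, which requires Clemens' realization theorem rather than gluing countably many pieces.
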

\begin{proof}
(1) Fix $x \in \bar A \setminus A$, and sequences $(r_n)_{n\in\omega}$ and
$(r'_n)_{n\in\omega }$ in $A$ such that $((r_n)_{n\in\omega },
(r'_n)_{n\in\omega }, x)$ is tree-suitable. Then, by Fact \ref{fact:ts}, $T$
is well-founded if and only if $\Phi_{r_nr'_n}(T)\in \V_A$.

(2) Assume first that $A$ is a closed subset of $ \RR^+$, or that $A$ is
Borel and $0$ is not a limit point of $A$. Then, in either case, for any
$X\in F ( \mathbb U )$ we have $X\in \V_A\Leftrightarrow\forall n,m\in\omega\
d(\psi_n(X),\psi_m(X))\in A$: for the backwards implication when $0$ is not a
limit point of $A$, use the fact the the condition $\forall n,m\in\omega\
d(\psi_n(X),\psi_m(X))\in A$ implies that $X$ is discrete, so $X=
\set{\psi_n(X)}{n\in\omega } $.

Conversely, assume that $ \V_A$ is Borel. Then $A$ is Borel by Proposition \ref{basicVA}(1) and if $A$
is not closed and $0$ is a limit point of $A$ we derive a contradiction from
(1).

(3) is immediate from (1) and Fact \ref{upperV}.

To prove (4) first recall that \( \V_A \) is \PI12 by Fact \ref{upperV}.

To show that $\V_A$ is \PI12-hard we fix a strictly decreasing sequence
$(\varepsilon_i)_{i \in \omega}$ in $A$ with $\varepsilon_i < 2^{-i}$.

Let $P \subseteq \Can$ be an arbitrary \PI12 set. We assume first that $A
\subseteq [0,1]$. Since $A$ is \SI11-complete, there exists a continuous
function $f: \Can \times \Can \to [0,1]$ such that $P = \set{\alpha \in
\Can}{\forall \beta \in \Can\, f(\alpha,\beta) \in A}$. Let us define a Borel
function $g$ from $\Can$ to the space of pruned trees on $\{0,1\}$ as
follows. Given $\alpha \in \Can$ consider the compact set $C_\alpha =
\set{f(\alpha,\beta)}{\beta \in \Can} \cup \set{\varepsilon_i}{i \in \omega}
\cup \{0\}$. Define $g(\alpha)$ to be the pruned tree such that $[g(\alpha)]
= \set{\gamma \in \Can}{\sum \gamma(i) 2^{-(i+1)} \in C_\alpha} $.
The function
$g$ is Borel, using the fact that, given $s\in 2^{<\omega }$,
\[
s\in g(\alpha ) \iff
\left [ \sum_{i=0}^{ \lh (s)-1} \frac{s(i)}{2^{i+1}}, \sum_{i=0}^{ \lh (s)-1} \frac{s(i)}{2^{i+1}} + \frac 1{2^{ \lh (s)}} \right ]
\cap C_{\alpha }\neq \emptyset
\]
together with the continuity of the function $ \Can \to
K([0,1]),\alpha\mapsto C_{\alpha }$ (using \cite[Exercise 4.29, iv) and
vi)]{Kechris1995}) and the fact that the relation of non-disjointness is
closed in $(K([0,1]))^2$ (\cite[Exercise 4.29, iii)]{Kechris1995}).

We now apply to $g(\alpha)$ the construction used by Clemens in his proof of
\cite[Theorem 4.7]{ClemensThesis} using the sequence $(\varepsilon_i)_{i \in
\omega}$ we fixed in advance. We thus obtain a function $h: \Can \to
\V_{[0,1]}$ such that $D(h(\alpha)) =C_{\alpha }$ for every $\alpha \in
\Can$. To see that $h$ is Borel one needs to inspect Clemens' construction,
keeping in mind Remark \ref{rem:coding} since $h(\alpha)$ is introduced by
defining the restriction of the distance function to a countable dense
subset. It is immediate that $\alpha \in P$ if and only if $h(\alpha) \in
\V_A$. Thus \( \V_A \) is \PI12-complete.

When $A \subseteq [0,n]$, by rescaling we obtain a function $h_n: \Can \to
\V_{[0,n]}$ reducing $P$ to $\V_A$.

If $A$ is unbounded, let $\varphi : \Can \to \RR^+$ be a continuous function
reducing a \SI11-complete subset of $ \Can $ to $A$. Since the range of
$\varphi$ is bounded, it follows that $A_n = A \cap [0,n]$ is \SI11-complete
for some $n$ and $h_n$ still does the job.
\end{proof}

If we further assume that $A \in \D$ we can draw the following corollaries.
These, combined with Theorem \ref{vaborel}(3), provide a complete picture of
the complexity of $\V_A$ for $A \in \D$ under analytic determinacy (which
ensures that every analytic set which is not Borel is \SI11-complete).

\begin{corollary}\label{VAD}
Let $A \in \D$. The following are equivalent:
\begin{enumerate}[(i)]
  \item $\V_A$ is Borel;
  \item $\V_A$ is \SI11;
  \item $A$ is closed or $0$ is not a limit point of $A$.
\end{enumerate}
\end{corollary}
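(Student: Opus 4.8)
The plan is to obtain the equivalence essentially for free from Theorem~\ref{vaborel}(2), together with Clemens' characterization of distance sets (Theorem~\ref{clemensrealized}), organizing the argument as the cycle (i)~$\Rightarrow$~(ii)~$\Rightarrow$~(iii)~$\Rightarrow$~(i). The implication (i)~$\Rightarrow$~(ii) is trivial, since Borel sets are in particular \SI11.

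For (iii)~$\Rightarrow$~(i) I would split into the two cases of~(iii). If $A$ is closed, then $\V_A$ is Borel directly by Theorem~\ref{vaborel}(2). If instead $0$ is not a limit point of $A$, then Theorem~\ref{clemensrealized} (this is where the hypothesis $A\in\D$ enters) forces $A$ to be countable, hence Borel; so Theorem~\ref{vaborel}(2) again yields that $\V_A$ is Borel, this time through its ``$A$ Borel and $0$ not a limit point of $A$'' clause. Thus both cases of~(iii) give~(i).

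For (ii)~$\Rightarrow$~(iii) I would argue by contraposition. Suppose (iii) fails, so that $A$ is not closed and $0$ is a limit point of $A$. Then $\V_A$ is \PI11-hard by Theorem~\ref{vaborel}(1). Since the class \SI11 is closed under Borel preimages, a \PI11-hard subset of a standard Borel space cannot be \SI11: otherwise every \PI11 subset of every standard Borel space would be \SI11, contradicting $\SI11\neq\PI11$ on uncountable standard Borel spaces. Hence $\V_A\notin\SI11$, i.e.\ (ii) fails.

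I do not expect any genuine obstacle here, as the content is already packaged in Theorem~\ref{vaborel}. The only point worth emphasizing is the role of the assumption $A\in\D$: for arbitrary $0\in A\subseteq\RR^+$, Theorem~\ref{vaborel}(2) carries the extra hypothesis ``$A$ Borel'' in its second disjunct, and what makes that clause redundant for distance sets is precisely Clemens' dichotomy, by which the failure of $0$ to be a limit point of $A$ already forces $A$ to be countable, hence Borel.
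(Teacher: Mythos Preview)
Your proposal is correct and follows essentially the same argument as the paper: the cycle (i)~$\Rightarrow$~(ii)~$\Rightarrow$~(iii)~$\Rightarrow$~(i), with (ii)~$\Rightarrow$~(iii) via the \PI11-hardness from Theorem~\ref{vaborel}(1) and (iii)~$\Rightarrow$~(i) via Theorem~\ref{vaborel}(2) together with Theorem~\ref{clemensrealized} to obtain countability when $0$ is isolated. Your write-up merely spells out a bit more explicitly why \PI11-hardness precludes being \SI11.
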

\begin{proof}
(i) implies (ii) is obvious. (ii) implies (iii) follows from Theorem
\ref{vaborel}(1) because if $\V_A$ is \SI11 then it is not \PI11-hard. To
check that (iii) implies (i) notice that if $A$ is closed then Theorem
\ref{vaborel}(2) applies, yielding immediately (i). If instead $0$ is not a
limit point of $A$ then, since $A \in \D$, $A$ is countable by Theorem
\ref{clemensrealized}. Thus $A$ is Borel and Theorem \ref{vaborel}(2) applies
again.
\end{proof}

\begin{corollary}\label{VAinD}
Let $A \in \D$.
\begin{enumerate}
  \item If $A$ is not Borel then $\V_A$ is neither analytic nor coanalytic;
  \item if $A$ is \SI11-complete then \( \V_A \) is \PI12-complete.
\end{enumerate}
\end{corollary}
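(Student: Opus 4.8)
The plan is to derive both parts as bookkeeping consequences of Theorem~\ref{vaborel}, Clemens' Theorem~\ref{clemensrealized}, and Proposition~\ref{basicVA}; there is no real obstacle here beyond correctly extracting the hypotheses of Theorem~\ref{vaborel} from the assumption on $A$.

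For part (1), assume $A \in \D$ is not Borel. The first step is to observe that $A$ must be uncountable, since every countable subset of $\RR^+$ is $F_\sigma$ and hence Borel; therefore, by Theorem~\ref{clemensrealized}, $0$ is a limit point of $A$. Moreover $A$ is not closed, again because closed sets are Borel. Thus the hypotheses of Theorem~\ref{vaborel}(1) are satisfied and $\V_A$ is $\PI11$-hard. Non-analyticity now follows in either of two ways: directly, because condition (iii) of Corollary~\ref{VAD} fails, so $\V_A$ is not $\SI11$; or, spelling it out, if $\V_A$ were $\SI11$ then every $\PI11$ subset of a standard Borel space, being Borel Wadge reducible to $\V_A$ and $\SI11$ being closed under Borel preimages, would be $\SI11$, which is false in uncountable standard Borel spaces. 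To rule out $\V_A$ being coanalytic I would invoke Proposition~\ref{basicVA}(1): $A$ Borel reduces to $\V_A$, so if $\V_A \in \PI11$ then $A \in \PI11$; but $A \in \SI11$ by Theorem~\ref{clemensrealized}, whence $A$ would be Borel by Souslin's theorem, contradicting our assumption. Hence $\V_A$ is neither analytic nor coanalytic.

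For part (2), assume $A$ is $\SI11$-complete. Then in particular $A$ is not Borel, so the argument just given shows that $0$ is a limit point of $A$. Now Theorem~\ref{vaborel}(4) applies verbatim and yields that $\V_A$ is $\PI12$-complete (the upper bound $\V_A \in \PI12$ is already recorded in Fact~\ref{upperV}, since $A$ is analytic).

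I do not expect any hard step: the only mild point requiring care is the implication ``$A$ not Borel $\Rightarrow$ $0$ is a limit point of $A$ and $A$ is not closed'', which is exactly where Clemens' characterization of distance sets (together with the triviality that closed and countable sets are Borel) is used to feed Theorem~\ref{vaborel}.
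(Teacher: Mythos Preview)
Your proof is correct and follows essentially the same approach as the paper. The paper's argument is terser but uses the same ingredients: for part (1) it invokes Corollary~\ref{VAD} (which encapsulates the $\PI11$-hardness you spell out) together with Proposition~\ref{basicVA}(1) and Souslin's theorem, and for part (2) it observes that a $\SI11$-complete set is uncountable and then applies Theorem~\ref{clemensrealized} and Theorem~\ref{vaborel}(4), exactly as you do.
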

\begin{proof}
(1) If $\V_A$ were analytic then it would be Borel by Corollary \ref{VAD},
and then $A$ would be Borel by Proposition \ref{basicVA}(1). Since $A$ is
analytic by Theorem \ref{clemensrealized}, Proposition \ref{basicVA}(1)
implies also that $\V_A$ is not coanalytic.

(2) Since any \SI11-complete set is uncountable, the result follows immediately
from Theorems \ref{clemensrealized} and \ref{vaborel}(4).
\end{proof}

\begin{table}
\begin{tabular}{|c|c|c|} \hline
\textbf{Properties of $A$} & \textbf{complexity of $\V_A$} & \textbf{Reference}\\
\hline\hline
$A$ closed or $0$ isolated in $A$ & Borel & \ref{VAD}\\
\hline
$A$ Borel not closed and $0$ not isolated in $A$ & \PI11-complete & \ref{vaborel}(3)\\
\hline
$A$ true analytic & neither \SI11 nor \PI11 & \ref{VAinD}(1)\\
\hline
$A$ \SI11-complete & \PI12-complete & \ref{VAinD}(2)\\
\hline
\end{tabular}\medskip
\caption{Summary of the complexity of $\V_A$ for $A \in \D$\label{tableM}}
\end{table}
For the reader's convenience we summarize in Table \ref{tableM} our results
for the complexity of $\V_A$ when $A \in \D$.

We now show that the complexity of $\V_A^\star$ often depends on the limit
points of $A$.

\begin{theorem}\label{vastarborel}
Let $A\in \D $.
\begin{enumerate}
  \item $\V_A^\star$ is Borel if and only if either $0$ is not a limit
      point of $A$ or $0$ is the unique limit point of $A$.
  \item Suppose $0$ is a limit point of $A$ and $A$ has other limit points
      (which may belong to $A$ or not).
\begin{enumerate}
\item If $A$ is closed then $ \V_A^\star$ is \SI11-hard;
\item if $A$ is not closed then $ \V_A^\star$ is \PI11-hard;
\item if $A$ is not closed and at least one of its limit points different
    from $0$ belongs to $A$, then $ \V_A^\star$ is $D_2(
    \SI11)$-hard.
\end{enumerate}
  \item if $A$ is \SI11-complete then \( \V_A^\star \) is
      \PI12-complete.
\end{enumerate}
\end{theorem}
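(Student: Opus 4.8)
The plan is to prove Theorem~\ref{vastarborel} by splitting into its three parts, using the tree-suitable construction $\Phi_{r_nr'_n}$ together with the gluing operation $\oplus_r$, and combining the results about $\V_A$ already established in Theorem~\ref{vaborel} and Fact~\ref{upperV}.

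For part (1), the backward direction: if $0$ is not a limit point of $A$ then $A$ is countable (Theorem~\ref{clemensrealized}), so $\V_A$ is \PI11 and $\V^\star_A$ is $D_2(\SI11)$ by Fact~\ref{upperV}; in fact when $0$ is isolated in $A$ every space in $\V_A$ is discrete (Theorem~\ref{proponlyultrametric}(3)), so $X \in \V^\star_A$ iff $\{d(\psi_n(X),\psi_m(X)) : n,m\} = A$, which is a Borel condition since $A$ is Borel. If $0$ is the unique limit point of $A$, I would write $\V^\star_A = \V_A \setminus \bigcup_{r \in A\setminus\{0\}} \V_{A\setminus\{r\}}$-type expression: more precisely $X \in \V^\star_A$ iff $X \in \V_A$ and for every $r$ in a fixed countable dense-in-$A$ set $\dots$ — the key point is that $A$ closed with $0$ the only limit point means $A\setminus\{0\}$ is discrete, hence a countable union of relatively clopen finite pieces, so $D(X) = A$ becomes a countable conjunction of conditions ``$r \in D(X)$'' for $r$ ranging over the countably many points of $A$, each of which is Borel (indeed $\SI11$, but realized here as Borel since $A$ is closed so $\V_A$ is Borel and membership of a fixed real in $D(X)$ is Borel on $\V_A$). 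For the forward direction one uses the contrapositive together with the hardness results of part (2).

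For part (2), the idea is uniform: pick a limit point $x \neq 0$ of $A$ and tree-suitable sequences $(r_n),(r'_n)$ in $A$ converging to $0$ and to $x$ respectively, with all $r_n, r'_n \in A$ — possible because $0$ and $x$ are both limit points, and $0$ being a limit point of $A$ lets us absorb the finitely many exceptional ``large'' distances. Then $\Phi_{r_nr'_n}(T)$ has distance set contained in $\{0\}\cup\{r_n,r'_n : n\}\cup\{x\}$, and $x \in D(\Phi_{r_nr'_n}(T))$ iff $T$ is ill-founded (Fact~\ref{fact:ts}). By gluing via $\oplus$ with a fixed auxiliary space carrying all the ``other'' distances of $A$, one arranges a Borel map $T \mapsto Z_T$ with $Z_T \in \V^\star_A$ iff [$Z_T \in \V_A$] and [$x \in D(Z_T)$]. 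For (a), $A$ closed: then $Z_T \in \V_A$ always, so $Z_T \in \V^\star_A$ iff $T$ ill-founded, giving $\SI11$-hardness. For (b), $A$ not closed: one chooses $x \in \bar A \setminus A$ if such an $x$ exists that is also a limit point; more carefully, $A$ not closed with a limit point $\neq 0$ — one reduces the $\PI11$-complete set of well-founded trees exactly as in Theorem~\ref{vaborel}(1), since $\V^\star_A \subseteq \V_A$ and the $\Phi$-construction already lands outside $\V_A$ (hence outside $\V^\star_A$) precisely when $T$ is ill-founded. For (c), $A$ not closed with a limit point $x \neq 0$ lying in $A$: here one wants to hit a $D_2(\SI11)$ set, i.e. an intersection of an analytic and a coanalytic set. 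I would run two independent tree-coordinates: one tree $T_1$ controlling whether the space stays inside $\V_A$ (via a limit point of $A$ not in $A$, as in (1)) and a second tree $T_2$ controlling whether the distance $x \in A$ is actually realized (via the $x \in A$ tree-suitable construction). Gluing the two pieces with $\oplus$, the combined space lies in $\V^\star_A$ iff [$T_1$ well-founded] and [$T_2$ ill-founded], which is the generic $D_2(\SI11)$ condition; so $\V^\star_A$ is $D_2(\SI11)$-hard. The main obstacle here is bookkeeping: ensuring all distances used lie in $A$, that the two gluing operations don't introduce spurious distances (controlled by Fact~\ref{fact:oplus}, choosing the gluing radius to be one of the $r_n$'s, already in $A$), and that the construction is genuinely Borel (guaranteed by Proposition~\ref{1531426} and the Borelness of $\Phi_{r_nr'_n}$ via Remark~\ref{rem:coding}).

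Part (3) is the easiest: $\V^\star_A$ is \PI12 by Fact~\ref{upperV}, and for \PI12-hardness one re-runs the proof of Theorem~\ref{vaborel}(4). That proof already produces, for an arbitrary \PI12 set $P \subseteq \Can$, a Borel map $h$ with $D(h(\alpha)) = C_\alpha$ where $C_\alpha = \{f(\alpha,\beta):\beta\}\cup\{\varepsilon_i:i\}\cup\{0\}$, and $\alpha \in P$ iff $C_\alpha \subseteq A$ iff $h(\alpha) \in \V_A$. To reduce to $\V^\star_A$ instead, I would glue (via $\oplus_r$ for a suitable $r \in A$, using Proposition~\ref{1531426}) a fixed space $Y \in \V^\star_A$ onto $h(\alpha)$: then $D(h(\alpha) \oplus_r Y) = C_\alpha \cup A \cup \{r\}$, which equals $A$ exactly when $C_\alpha \subseteq A$, i.e. exactly when $\alpha \in P$. (When $A$ is $\SI11$-complete it is in particular unbounded or not, and the rescaling/truncation argument from Theorem~\ref{vaborel}(4) carries over verbatim.) Hence $\V^\star_A$ is \PI12-complete. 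I expect part (2)(c) to be the real crux, since it is the only place needing a genuinely two-layered construction and the only place where the precise hypothesis ``a limit point $\neq 0$ belongs to $A$, yet $A$ is not closed'' is exploited in full.
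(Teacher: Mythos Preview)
Your proposal is correct and follows essentially the same route as the paper: tree-suitable constructions $\Phi_{r_nr'_n}$ glued via $\oplus_r$ to an auxiliary space realizing the remaining distances of $A$, combined with Theorem~\ref{vaborel}, Fact~\ref{upperV}, and (implicitly) Proposition~\ref{basicVA}(2). Your handling of (2)(c) with two independent tree-coordinates is exactly the paper's construction $(U,T)\mapsto (\Phi_{r_ns_n}(U)\oplus_{r_0}\Phi_{r_nt_n}(T))\oplus_{r_0}X$ with $X\in\V^\star_{A\setminus\{y\}}$.

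One presentational wrinkle in (2)(b): as written, you only argue that ill-founded $T$ lands outside $\V_A$ (hence outside $\V^\star_A$), and the inclusion $\V^\star_A\subseteq\V_A$ goes the wrong way for the converse. The gluing from your uniform setup is what makes well-founded $T$ land \emph{inside} $\V^\star_A$; the paper phrases this more cleanly by simply composing Theorem~\ref{vaborel}(1) with the reduction $\V_A\to\V^\star_A$ of Proposition~\ref{basicVA}(2). Likewise for (3), the paper just cites Proposition~\ref{basicVA}(2) and Corollary~\ref{VAinD}(2), which is precisely the glue-with-a-fixed-$Y\in\V^\star_A$ argument you spell out by hand.
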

\begin{proof}
We start from (2). For (a) pick $y \neq 0$ which is a limit point of $A$:
obviously $y \in A$. Fix now sequences $(s_n)_{n\in\omega
},(s'_n)_{n\in\omega }$ in $A$ such that the triple $((s_n)_{n\in\omega },
(s'_n)_{n\in\omega }, y)$ is tree-suitable.
As $A\setminus\{ y\}\in \mathcal D $ by Theorem \ref{clemensrealized}, fix also a space $Y \in
\V^\star_{A\setminus\{ y\} }$. Then, using Facts \ref{fact:ts} and
\ref{fact:oplus}, the function $T \mapsto \Phi_{s_ns'_n}(T) \oplus_{s_0} Y$
is a Borel reduction from ill-founded trees to $ \V_A^\star$.

(b) follows from Proposition \ref{basicVA}(2) and Theorem \ref{vaborel}(1).

To prove (c) let $x \in \bar A \setminus A$, and let $y \in A \setminus
\{0\}$ which is a limit point of $A$. Fix sequences $(r_n)_{n\in\omega }$,
$(s_n)_{n\in\omega }$, $(t_n)_{n\in\omega }$ in $A\setminus\{ y\}$ such that
both $((r_n)_{n\in\omega }, (s_n)_{n\in\omega }, x)$ and $((r_n)_{n\in\omega
}, (t_n)_{n\in\omega }, y)$ are tree-suitable. By Theorem
\ref{clemensrealized} again, $A\setminus\{ y\}\in \mathcal D $, so fix $X \in
\V^\star_{A \setminus\{ y\}}$. Then, by Facts \ref{fact:ts} and
\ref{fact:oplus}, the Borel function
\[
(U,T) \mapsto \Theta (U,T)=(\Phi_{r_ns_n}(U) \oplus_{r_0} \Phi_{r_nt_n}(T)) \oplus_{r_0} X
\]
is such that $\Theta (U,T)\in \V_A^\star$ if and only if $U$ is well-founded
and $T$ is ill-founded.\smallskip

We now deal with (1). The forward direction follows from (2), since if the
thesis were false one of (a) and (b) would apply.

Conversely, assume first that $0$ is not a limit point of $A$. Then, by Theorem \ref{clemensrealized}, $A$ is
countable and all members of $ \V_A^\star$ are discrete. Then for any $X\in F
( \mathbb U )$ we have that $X\in \V_A^\star$ if and only if $X \in \V_A
\land \forall a \in A\; \exists m_1, m_2 \in \omega\;
d(\psi_{m_1}(X),\psi_{m_2}(X)) = a$. Theorem \ref{vaborel}(2) allows to
conclude in this case.

Finally, suppose $0$ is the unique limit point of $A$. Then $A$ is closed and
countable and all  elements of $A$ different from $0$ are isolated in $A$.
Thus for any $X\in F ( \mathbb U )$ we have again that $X \in \V_A^\star$ if
and only if $X \in \V_A \land \forall a \in A\; \exists m_1, m_2 \in \omega\;
d(\psi_{m_1}(X),\psi_{m_2}(X)) = a$, which allows to conclude by applying
Theorem \ref{vaborel}(2).\smallskip

(3) follows immediately from Proposition \ref{basicVA}(2) and Corollary
\ref{VAinD}(2).
\end{proof}

We now consider the case when $A$ is countable. Then $\V_A$ is either Borel
or \PI11-complete according to Theorem \ref{vaborel}(2 and 3). We can obtain
a complete classification of the complexity of $\V_A^\star$ as well.

\begin{theorem}\label{complcount}
Let $A$ be a countable subset of $ \RR^+$, with $0\in A$.
\begin{enumerate}
\item If $0$ is not a limit point of $A$ or $0$ is the unique limit point
    of  $A$, then $ \V_A^\star$ is Borel;
\item if $0$ is a limit point of $A$ and $A$ is closed having other limit
    points besides $0$, then $ \V_A^\star$ is \SI11-complete;
\item if $0$ is a limit point of $A$, $A$ is not closed, and all limit
    points of $A$ different from $0$ do not belong to $A$, then $
    \V_A^\star$ is \PI11-complete;
\item if $0$ is a limit point of $A$, $A$ is not closed and contains a
    limit point different from $0$, then $ \V_A^\star$ is $D_2(
    \SI11)$-complete.
\end{enumerate}
\end{theorem}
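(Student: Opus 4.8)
The plan is to read off the result as the sharp, countable counterpart of Theorem~\ref{vastarborel}: since $A$ is countable with $0\in A$, Theorem~\ref{clemensrealized} gives $A\in\D$, so all the results of the preceding sections apply, and in particular the lower bounds are already available. Indeed each of clauses (2), (3), (4) assumes that $0$ is a limit point of $A$ and that $A$ has at least one further limit point (in (3) this is forced by $A$ being non-closed, since $0\in A$), so Theorem~\ref{vastarborel}(2) applies and yields respectively $\SI11$-hardness of $\V_A^\star$ in case (2) via part~(a) (where $A$ is closed), $\PI11$-hardness in case (3) via part~(b) (where $A$ is not closed), and $D_2(\SI11)$-hardness in case (4) via part~(c) (where $A$ is not closed and carries a limit point $\neq 0$ belonging to $A$). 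Thus the whole content is to supply upper bounds of matching complexity.

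Case (1) is the relevant direction of Theorem~\ref{vastarborel}(1), and case (4) is immediate from Fact~\ref{upperV}, which already places $\V_A^\star$ in $D_2(\SI11)$ for every countable $A$. For cases (2) and (3) I would start from the identity
\[
\V_A^\star \;=\; \V_A \;\cap\; \bigcap_{a\in A}\, \{\, X \mid a\in D(X) \,\},
\]
recalling that $\V_A$ is $\PI11$ for countable $A$ by Fact~\ref{upperV}, and is in fact Borel when $A$ is closed by Theorem~\ref{vaborel}(2). The set $\{X\mid a\in D(X)\}$ is analytic: $a\in D(X)$ holds exactly when there are $(n_k)_{k\in\omega}$, $(m_k)_{k\in\omega}$ with $(\psi_{n_k}(X))_k$ and $(\psi_{m_k}(X))_k$ Cauchy and $d(\psi_{n_k}(X),\psi_{m_k}(X))\to a$, i.e.\ a prescribed distance of $X$ is realized as a limit of distances between points of the fixed dense set $\{\psi_n(X)\mid n\in\omega\}$, exactly as in the proof of Proposition~\ref{prop:automaticBorel}. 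In case (2), $\V_A$ is Borel and $\bigcap_{a\in A}\{X\mid a\in D(X)\}$ is a countable intersection of analytic sets, hence analytic, so $\V_A^\star$ is $\SI11$; with part~(a) this gives $\SI11$-completeness. In case (3), the hypothesis that no limit point of $A$ other than $0$ belongs to $A$ means that every point of $A\setminus\{0\}$ is isolated in $A$; hence for $X\in\V_A$ and $a\in A\setminus\{0\}$ any sequence of distances $d(\psi_{n_k}(X),\psi_{m_k}(X))\in A$ converging to $a$ is eventually equal to $a$, so on $\V_A$ the predicate ``$a\in D(X)$'' is equivalent to the \emph{Borel} predicate $\exists n,m\; d(\psi_n(X),\psi_m(X))=a$ (the case $a=0$ being automatic, as $X\neq\emptyset$). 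Then $\V_A^\star$ is the intersection of the $\PI11$ set $\V_A$ with a Borel set, hence $\PI11$, and with part~(b) we get $\PI11$-completeness.

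The only mildly delicate point, and the main thing to get right, is the two descriptions of the predicate ``$a\in D(X)$'': in general, that a realized distance of $X$ is always a limit of distances between members of the fixed countable dense set $\{\psi_n(X)\mid n\in\omega\}$ (routine, from density and completeness of $X$); and, in case (3), that the constraint $D(X)\subseteq A$ forces such an approximating sequence of distances to stabilize, since $a$ is isolated in $A$. Once these are in place the theorem assembles from the cited results; it remains only to note that the four hypotheses are mutually exclusive and jointly exhaustive among countable $A$ with $0\in A$, which is a trivial case split on whether $0$ is a limit point, whether $A$ has further limit points, whether $A$ is closed, and whether some limit point $\neq 0$ belongs to $A$.
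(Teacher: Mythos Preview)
Your proposal is correct and follows essentially the same approach as the paper: the lower bounds in (2)--(4) are read off from Theorem~\ref{vastarborel}(2), case (1) is Theorem~\ref{vastarborel}(1), the $D_2(\SI11)$ upper bound in (4) is Fact~\ref{upperV}, and in (2) and (3) you express $\V_A^\star$ as $\V_A$ intersected with the countable conjunction of the conditions ``$a\in D(X)$'', observing that $\V_A$ is Borel when $A$ is closed and $\PI11$ in general, while each ``$a\in D(X)$'' is analytic in general and becomes the Borel condition $\exists n,m\ d(\psi_n(X),\psi_m(X))=a$ once $a$ is isolated in $A$. The only cosmetic difference is that the paper phrases the analytic predicate in (2) directly as $\exists x,y\in X\ d(x,y)=a$ rather than via Cauchy sequences of dense points, but the content is the same.
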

\begin{proof}
(1) follows from Theorem \ref{vastarborel}(1).

(2) In this case, by Theorem \ref{vaborel}(2), $\V_A$ is Borel. Since $X \in
\V^\star_A$ if and only if $X \in \V_A \land \forall a \in A\, \exists x,y
\in X\, d(x,y) = a$, we have that $\V^\star_A$ is \SI11. Completeness follows
from Theorem \ref{vastarborel}(2a).

(3) In this case, all points of $A$ different from $0$ are isolated in $A$ so
that, as at the end of the proof of Theorem \ref{vastarborel}(1), we have $X
\in \V_A^\star$ if and only if $X \in \V_A \land \forall a \in A\, \exists
m_1,m_2\, d(\psi_{m_1}(X),\psi_{m_2}(X))=a$. Hence $\V_A^\star$ is \PI11
because $\V_A$ is \PI11 by Fact \ref{upperV}. Completeness follows from
Theorem \ref{vastarborel}(2b).

(4) By Fact \ref{upperV} $\V^\star_A$ is $D_2( \SI11)$ and completeness
follows from Theorem \ref{vastarborel}(2c).
\end{proof}

\begin{remark}
In the literature, there are very few ``natural'' examples  of  sets
belonging to the class $D_2( \SI11)$ but not to simpler ones. The set \(
\V_A^\star \) with \( A \) as in Theorem \ref{complcount}(4) is one of these. Other notable examples
are: the collection of countable graphs whose automorphism group is isomorphic to ${}^{\omega } \ZZ_p$ for $p$ a prime number \cite{ck2000}; the collection of countable linear orders which are not strongly
surjective~\cite{cammarcar}; and some collection of measurable sets generated
using the density function on the Cantor space \cite{AC}.
\end{remark}

\begin{table}
\begin{tabular}{|c|c|c|} \hline
\textbf{Properties of $A$} & \textbf{complexity of $\V^\star_A$} & \textbf{Reference}\\
\hline\hline
\makecell{$0$ isolated in $A$\\ or $0$ unique limit point of $A$} & Borel & \ref{vastarborel}(1)\\
\hline
\makecell{$0$ not isolated in $A$,\\ $A$ closed with other limit points} & \SI11-hard & \ref{vastarborel}(2)(a)\\
\hline
\makecell{$0$ not isolated in $A$,\\ $A$ \emph{countable} closed with other limit points} & \SI11-complete & \ref{complcount}(2)\\
\hline
\makecell{$0$ not isolated in $A$,\\ $A$ not closed with other limit points} & \PI11-hard & \ref{vastarborel}(2)(b)\\
\hline
\makecell{$0$ not isolated in $A$, $A$ \emph{countable} not closed\\ with all other limit points not in $A$} & \PI11-complete & \ref{complcount}(3)\\
\hline
\makecell{$0$ not isolated in $A$,\\ $A$ not closed with other limit points in $A$} & $D_2( \SI11)$-hard & \ref{vastarborel}(2)(c)\\
\hline
\makecell{$0$ not isolated in $A$, $A$ \emph{countable} not closed\\ with other limit points in $A$} & $D_2( \SI11)$-complete & \ref{complcount}(4)\\
\hline
$A$ \SI11-complete & \PI12-complete & \ref{vastarborel}(3)\\
\hline
\end{tabular}\medskip
\caption{Summary of the complexity of $\V^*_A$ for $A \in \D$\label{tableM*}}
\end{table}
Table \ref{tableM*} summarizes our results for the complexity of $\V^\star_A$
when $A \in \D$.\medskip

Several sections of Gao and Shao's paper \cite{GaoShao2011} are devoted to
different ways of constructing, for every countable $A \in \D$ (which, by
Theorem \ref{necessary}(2), means for every $A$ such that $\U^\star_A \neq
\emptyset$), a Polish $A$-ultrametric spaces $X$ which is $A$-universal
(i.e.\ such that $X \in \U^\star_A$, and $Y \sqsubseteq X$ for every $Y \in
\U_A$) and ultrahomogeneous. They call such a space $A$-ultrametric Urysohn.
The analogous question for Polish spaces was considered by Sauer in
\cite{Sauer} (beware that Sauer calls homogeneous the spaces we call
ultrahomogeneous, and that his definition of universality is equivalent to
ours only for ultrahomogeneous and complete spaces).

\begin{defin}
We say that a metric space $X$ is \emph{ultrahomogeneous} if every isometry between finite subsets of $X$ can be extended to an isometry of the whole $X$.
\end{defin}

\begin{defin}
Let $A \in \D$. We say that $X \in \V_A$ is \emph{Polish $A$-universal} if $Y
\sqsubseteq X$ for every $Y \in \V_A$ (clearly $X \in \V^\star_A$ must then
hold). If additionally $X$ is ultrahomogeneous then we say it is \emph{$A$-Urysohn}.
\end{defin}

Here we use Corollary \ref{VAD} to extend Sauer's characterization
(\cite[Theorem 4.13]{Sauer}, which is the equivalence between (i) and (iv) in
Theorem \ref{Urysohn} below) and give a different proof of the necessity of
the condition for the existence of $A$-Urysohn spaces. The following property
was isolated in \cite{DLPS}.

\begin{defin}
A triple $(a, b, c)$ of elements of $\RR^+$ is \emph{metric} if $a \leq b + c$, $b
\leq a + c$, and $c \leq a + b$. A set $A \subseteq \RR^+$ satisfies the
\emph{$4$-values condition} if for all pairs of metric triples of numbers in
$A$ of the form $(a, b, x)$ and $(c, d, x)$ there exists $y \in A$ such that
both $(b, c, y)$ and $(a, d, y)$ are metric triples.
\end{defin}

\begin{theorem}\label{Urysohn}
Let $A \in \D$. The following are equivalent:
\begin{enumerate}[(i)]
  \item $A$ satisfies the $4$-values condition and either is closed or $0$
      is not a limit point of $A$;
  \item $A$ satisfies the $4$-values condition and $\V_A$ is Borel;
  \item $A$ satisfies the $4$-values condition and $\V_A$ is \SI11;
  \item there exists $X \in \V_A$ which is $A$-Urysohn;
  \item $A$ satisfies the $4$-values condition and there exists $X \in
      \V_A$ which is Polish $A$-universal.
\end{enumerate}
\end{theorem}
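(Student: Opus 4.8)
The plan is to prove Theorem~\ref{Urysohn} by establishing a cycle of implications connecting the combinatorial condition on $A$, the descriptive complexity of $\V_A$, and the existence of $A$-Urysohn (or merely Polish $A$-universal) spaces. The equivalence of (i), (ii) and (iii) is the easy part: by Corollary~\ref{VAD}, for $A \in \D$ the conditions ``$\V_A$ is Borel'', ``$\V_A$ is \SI11'', and ``$A$ is closed or $0$ is not a limit point of $A$'' are already known to be equivalent, so adjoining the $4$-values condition to each yields (i) $\Leftrightarrow$ (ii) $\Leftrightarrow$ (iii) immediately. Thus the real content is to link these with (iv) and (v).

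For the implication (i) $\Rightarrow$ (iv), I would invoke Sauer's construction directly: Sauer~\cite[Theorem 4.13]{Sauer} shows that under the $4$-values condition together with a suitable closure/separability hypothesis on $A$ there is an ultrahomogeneous metric space realizing exactly the distances in $A$ into which every metric space with distances in $A$ embeds; the point is to check that when ``$A$ closed or $0$ not a limit point of $A$'' holds, Sauer's space is \emph{Polish}, i.e.\ separable and complete, so that it is a genuine element of $\V_A$ and is $A$-Urysohn in our sense. Separability comes from the Fra\"iss\'e-style amalgamation yielding a countable ultrahomogeneous ``rational'' core whose completion is the space; completeness is by construction; that the distance set of the completion is still exactly $A$ (rather than $\bar A$) is precisely where one uses that $A$ is closed, or else that $0$ is not a limit point of $A$ (so $A$ is countable and closed by Theorem~\ref{clemensrealized}) — in either case no new distances are created in the limit. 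The implication (iv) $\Rightarrow$ (v) is trivial since an $A$-Urysohn space is by definition Polish $A$-universal and ultrahomogeneous, and ultrahomogeneous Polish spaces satisfying a universality property of this kind are well known to force the $4$-values condition on their distance set (this is again in \cite{Sauer}, or can be derived from a direct amalgamation argument inside $X$).

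It remains to close the cycle with (v) $\Rightarrow$ (i), equivalently (v) $\Rightarrow$ (iii) in view of the trivial equivalences above. Here I would argue the contrapositive: suppose $A$ satisfies the $4$-values condition but $\V_A$ is not \SI11, i.e.\ by Corollary~\ref{VAD} that $A$ is not closed and $0$ \emph{is} a limit point of $A$; I claim there is no Polish $A$-universal space. Pick $x \in \bar A \setminus A$. A Polish $A$-universal $X$ would have to isometrically embed, for suitable tree-suitable sequences $(r_n),(r'_n)$ in $A$ with limit $x$, every space $\Phi_{r_n r'_n}(T)$ with $T$ ranging over \emph{well-founded} trees (these lie in $\V_A$ by Fact~\ref{fact:ts}); but as $T$ ranges over well-founded trees of increasing rank these spaces ``approximate'' a configuration whose completion inside $X$ would realize the distance $x \notin A$, contradicting $X \in \V_A$. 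Making this precise is the main obstacle: one needs to show that the images of the $\Phi_{r_n r'_n}(T)$ accumulate to a point of $X$ at distance $x$ from the image of the basepoint $\ast$, which requires building, inside the putative $X$, a single Cauchy sequence witnessing distance $x$ — this is exactly the kind of compactness/amalgamation argument that Sauer uses, and it is where the burden of the proof lies. An alternative, cleaner route for (v) $\Rightarrow$ (i) is to show directly that a Polish $A$-universal space must be ``$A$-saturated'' in a way that forces $A = D(X)$ to be closed whenever $0$ is a limit point of $A$: if $0 \in \bar A$ then arbitrarily small distances are realized in $X$, and an amalgamation inside the ultrahomogeneous universal space produces, for any $x \in \bar A$, a Cauchy sequence realizing $x$ as a distance, so $x \in D(X) = A$; hence $A$ is closed.

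Thus the skeleton is: (i) $\Leftrightarrow$ (ii) $\Leftrightarrow$ (iii) by Corollary~\ref{VAD}; (i) $\Rightarrow$ (iv) by a Polishness check on Sauer's Fra\"iss\'e construction; (iv) $\Rightarrow$ (v) trivially; and (v) $\Rightarrow$ (i) by showing a Polish $A$-universal space forces $A$ closed (when $0$ is a limit point), via an amalgamation argument inside the space. I expect the Polishness verification in (i) $\Rightarrow$ (iv) — specifically, that the completion of the countable homogeneous model does not enlarge the distance set beyond $A$ — and the amalgamation argument in (v) $\Rightarrow$ (i) to be the two substantive steps; everything else is bookkeeping with results already in the excerpt.
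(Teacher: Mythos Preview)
Your treatment of (i)$\Leftrightarrow$(ii)$\Leftrightarrow$(iii), (i)$\Rightarrow$(iv), and (iv)$\Rightarrow$(v) matches the paper's proof essentially line for line.

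The real divergence is in closing the cycle. The paper proves (v)$\Rightarrow$(iii) in one sentence: if a Polish $A$-universal $X$ exists, then $\V_A = \{\,Y \in F(\mathbb U)\mid Y \sqsubseteq X\,\}$, and since $\sqsubseteq$ is analytic on $F(\mathbb U)$, $\V_A$ is $\SI11$. You instead aim for (v)$\Rightarrow$(i) directly and propose two arguments, both of which have genuine gaps. Your first argument embeds the spaces $\Phi_{r_nr'_n}(T)$ for well-founded $T$ into $X$ and hopes that the embedded copies ``accumulate'' so as to realize the forbidden distance $x$; but universality gives you a separate embedding for each $T$, with no coherence whatsoever between them, so there is no reason a single Cauchy sequence should emerge. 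Your second, ``cleaner'' argument speaks of ``amalgamation inside the ultrahomogeneous universal space'', but in (v) the space $X$ is \emph{not} assumed ultrahomogeneous --- that is precisely what distinguishes (v) from (iv) --- so you cannot run amalgamation arguments inside $X$.

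In short, you correctly identified that (v)$\Rightarrow$(iii) suffices, but then overlooked the immediate descriptive-set-theoretic payoff of having a single universal object: it turns membership in $\V_A$ into an existential condition (existence of an isometric embedding into the fixed $X$), which is $\SI11$ on the nose. This bypasses entirely the need to prove $A$ closed by hand.
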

\begin{proof}
The equivalence between (i), (ii), and (iii) follows immediately from
Corollary \ref{VAD}.

(i) implies (iv) is obtained by Sauer repeating the classical construction of
the Urysohn space (which in our terminology would be $\RR^+$-Urysohn) using
only spaces in $\V_A$: we amalgamate (using the $4$-values condition) the
finite members of $\V_A$ obtaining an ultrahomogeneous $Z$ with $D(Z) = A$
which isometrically embeds all countable metric spaces using distances in
$A$. We then let $X$ to be the completion of $Z$, and we need to check that
(i) guarantees that $X$ does not use distances outside $A$. If $0$ is not a
limit point of $A$ then $Z$ is discrete so that $X=Z \in \V_A$. Otherwise
$D(X) \subseteq \overline{D(Z)} = \overline{A}$, thus if $A$ is closed we
have $X \in \V_A$. Moreover $X$ is still ultrahomogeneous by \cite[Theorem
4.4]{Sauer}.

To prove (iv) implies (v) we need Theorem 3.9 of \cite{Sauer}, stating that
the set of distances of an ultrahomogeneous universal metric space satisfies
the $4$-values condition.

We complete the proof by showing that (v) implies (iii). This is immediate
because if a Polish $A$-universal $X$ exists, then $Y \in \V_A$ if and only
if $Y \sqsubseteq X$, and $\sqsubseteq $ is analytic.
\end{proof}

\section{Isometry and isometric embeddability}\label{polish}

As a first step in our analysis of isometry and isometric embeddability
restricted to $\V_A$ and $\V^\star_A$ we prove the following proposition,
which also answers the first part of \cite[Question 3]{ClemensPreprint} by
showing that if \( A,A' \in \D \) and \( A \subseteq A' \), then \(
{\isom^\star_A} \leq_B {\isom^\star_{A'} } \).

\begin{proposition} \label{questionclemens}
Let $A,A' \in \D$ and assume $A \subsetneq A'$. Then ${\isom_A^\star} \leq_B
{\isom_A} \leq_B {\isom_{A'}^\star} \leq_B {\isom_{A'}}$ and
${\sqsubseteq_A^\star} \leq_B {\sqsubseteq_A} \leq_B {\sqsubseteq_{A'}^\star}
\leq_B {\sqsubseteq_{A'}}$.
\end{proposition}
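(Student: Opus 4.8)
The plan is to establish the chain in four pieces, reading from left to right, by exhibiting explicit Borel reductions. Two of the four inequalities are essentially trivial: $\isom_A^\star \leq_B \isom_A$ and $\sqsubseteq_A^\star \leq_B \sqsubseteq_A$ hold because $\V_A^\star \subseteq \V_A$ and $\isom_A^\star$, $\sqsubseteq_A^\star$ are by definition the restrictions of $\isom$, $\sqsubseteq$ to this smaller space, so the inclusion map $\V_A^\star \hookrightarrow \V_A$ is a Borel reduction (it is the restriction of the identity on $F(\mathbb{U})$, hence Borel). Likewise $\isom_{A'}^\star \leq_B \isom_{A'}$ and $\sqsubseteq_{A'}^\star \leq_B \sqsubseteq_{A'}$. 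So the content is in the two middle inequalities $\isom_A \leq_B \isom_{A'}^\star$ and $\sqsubseteq_A \leq_B \sqsubseteq_{A'}^\star$.

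For these I would use the $\oplus_r$ construction together with Fact~\ref{fact:oplus}. Since $A \subsetneq A'$ and both are in $\D$, the set $A' \setminus A$ is nonempty; pick $r \in A' \setminus A$ with $r > 0$ (note $0 \in A$, so any element of $A' \setminus A$ is positive). Since $A' \in \D$, fix once and for all a space $Y \in \V_{A'}^\star$, i.e. with $D(Y) = A'$. Now consider the map $X \mapsto X \oplus_r Y$. By Proposition~\ref{1531426} this is (up to isometry, for some choice of gluing points, which is irrelevant here) a Borel function $F(\mathbb{U}) \to F(\mathbb{U})$. If $X \in \V_A$, then $D(X) \subseteq A \subseteq A'$, so by Fact~\ref{fact:oplus} we get $D(X \oplus_r Y) = D(X) \cup D(Y) \cup \{r\} = A' $ (using $D(Y) = A'$ and $r \in A'$), hence $X \oplus_r Y \in \V_{A'}^\star$. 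So the map sends $\V_A$ into $\V_{A'}^\star$.

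It remains to check that $X \mapsto X \oplus_r Y$ respects isometry and isometric embeddability in both directions. For the forward direction this is clear: an isometry $X \cong X'$ extends to an isometry $X \oplus_r Y \cong X' \oplus_r Y$ by acting as the identity on the $Y$-part (and similarly for embeddings, $X \sqsubseteq X'$ gives $X \oplus_r Y \sqsubseteq X' \oplus_r Y$). The main obstacle, and the only real point requiring care, is the converse: from an isometry (resp.\ isometric embedding) $X \oplus_r Y \to X' \oplus_r Y$ one must recover one between $X$ and $X'$. The idea is that the value $r$, being strictly smaller than every distance realized between the $X$-part and the $Y$-part that is not already equal to $r$, and the geometry of $\oplus_r$ (all cross-distances are $\geq r$, and distances within $X$ can be arbitrary members of $A$) let one identify which points lie in the copy of $X$: for an isometric embedding $\varphi\colon X\oplus_r Y\to X'\oplus_r Y$, a point $x$ in the $X$-side must map into the $X'$-side, because otherwise all its images would be within distance determined by $\max\{d_Y(\cdot,\bar y),\dots\}$ forcing contradictions with distances inside $X$; one then checks $\varphi$ restricted to $X$ is an isometric embedding into $X'$. (For isometry the same argument, applied in both directions, pins down the $X$-parts exactly.) A clean way to package this is to observe that the copy of $X$ inside $X \oplus_r Y$ is, up to isometry, recoverable as a maximal subspace containing a fixed basepoint and avoiding the "far" structure of $Y$ — but since the authors explicitly say the choice of gluing points is irrelevant and these verifications are routine for the $\oplus_r$ operation, I would state the reduction and leave the straightforward back-and-forth check to the reader, citing Fact~\ref{fact:oplus} and Lemma~\ref{oplus} for the metric facts.
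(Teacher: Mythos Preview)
Your handling of the outer two inequalities is fine, and using the $\oplus_r$ construction for the middle step is natural. The gap is in the converse direction, which you explicitly flag as ``the only real point requiring care'' and then wave away. In fact the map $X \mapsto X \oplus_r Y$ with $Y \in \V^\star_{A'}$ and $r$ an arbitrary element of $A' \setminus A$ is \emph{not} a reduction in general. Take $A = \{0,2\}$, $A' = \{0,1,2\}$, $r = 1$, and $Y = \{a_0,a_1,a_2,\dotsc\}$ with $d_Y(a_0,a_1) = 1$ and all other nonzero distances equal to $2$. Let $X$ be two points at distance $2$ and $X'$ a single point. Then (gluing at $a_0$) both $X \oplus_1 Y$ and $X' \oplus_1 Y$ consist of a triangle with side $1$ together with countably many points at distance $2$ from everything, so they are isometric although $X \not\cong X'$. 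Your heuristic that ``a point $x$ in the $X$-side must map into the $X'$-side'' fails because $r \in D(Y)$ and $Y$ contains points realizing every distance in $A$, so there is no metric marker distinguishing the $X$-part from the $Y$-part.

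The paper avoids this by splitting into two cases. If $A' = A \cup \{r_0\}$ with $r_0 > a$ for all $a \in A$, it glues with a space $Z \in \V^\star_A$ (not $\V^\star_{A'}$): then \emph{every} cross distance equals $r_0$ and $r_0$ is realized neither in $X$ nor in $Z$, which gives a genuine partition marker and makes the back-and-forth go through. In the remaining case one can pick $r_0 \in A' \setminus A$ and $r_1 \in A'$ with $r_0 < r_1$, and the paper uses the \emph{product} $X \mapsto X \times Z$ (with the sup metric) for a carefully chosen $Z \in \V^\star_{A'}$ in which $r_0$ is realized by exactly one pair of points; the distance $r_0$, absent from $D(X)$, then lets one recover an isometry (or embedding) $X \to X'$ from one between the products. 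The case split and the product construction are the missing ideas.
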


\begin{proof}
For any $A$ we have $\V^\star_A \subseteq \V_A$ and hence ${\isom_A^\star}
\leq_B {\isom_A}$ and ${\sqsubseteq^\star_A} \leq_B {\sqsubseteq_A}$. Thus we
need only to prove ${\isom_A} \leq_B {\isom_{A'}^\star}$ and ${\sqsubseteq_A}
\leq_B {\sqsubseteq_{A'}^\star}$.

Suppose first that $A'=A\cup\{ r_0\} $ and $a<r_0$ for all $a\in A$. Fix a
space $Z \in \V_A^\star$. Given $X\in \V_A$, define $X'= X \oplus_{r_0} Z \in
\V^\star_A$ by Fact \ref{fact:oplus}: this map is Borel by Proposition
\ref{1531426}. We show that it is the required reduction. Notice that by case
assumption $d_{X'}(x,z)=r_0$ whenever $x\in X$ and $z\in Z$. If $\psi\colon
X\to Y$ is an isometric embedding (respectively, an isometry), then $\psi\cup
{\rm id}_Z \colon X'\to Y'$ is an isometric embedding (respectively, an
isometry) as well. Conversely, suppose $\psi \colon X'\to Y'$ is an isometric
embedding. Since the distance \( r_0 \) is never realized inside any of \( X
\), \( Y \), and \( Z \), either $\psi(X)\subseteq Y$ and $\psi(Z)\subseteq
Z$, in which case $\psi \restriction X$ witnesses $X \sqsubseteq Y$, or else
$\psi(X)\subseteq Z$ and $\psi(Z)\subseteq Y$. In the latter case, $X
\sqsubseteq Z$ and $Z \sqsubseteq Y$, so again $X \sqsubseteq Y$. If moreover
$\psi$ is onto, then we have either $\psi(X)=Y$, or else $\psi(X)=Z$ and
$\psi(Z)=Y$, which allows us to conclude that $X \isom Y$.

Otherwise there exist $r_0 \in A' \setminus A$ and $r_1 \in A'$ with
$r_0<r_1$. Notice that there is $Z\in \V_{A'}^\star$ with the property that
$d_Z(z_0,z_1)=r_0$ for exactly one pair of  points $\{ z_0,z_1\} \subseteq
Z$. Indeed $A'\setminus\{ r_0\}\in \mathcal D $ by Theorem \ref{clemensrealized}, so let $Z = W \oplus_{r_1} V$ where $W \in \V_{A' \setminus \{r_0\}
}^\star$ and $V = \{z_0,z_1\}$ consists of two points at distance $r_0$, so
that $d_Z(w,z_i) \geq r_1$ for every $w \in W$ and $i \in \{0,1\}$.

We show that the mapping that associates to every $X\in \V_A$ the space
$X\times Z$ with the product metric $d_{X\times Z}((x,z),(x',z'))=\max
\{d_X(x,x'),d_Z(z,z')\}$ is the required Borel reduction. Borelness is proved
using Remark \ref{rem:coding}.

If $\psi \colon X\to Y$ is an isometric embedding (respectively, an
isometry), then $\psi\times {\rm id}_Z \colon X\times Z\to Y\times Z$ is an
isometric embedding (respectively, an isometry). Conversely, suppose $\psi
\colon X\times Z\to Y\times Z$ is an isometric embedding. Then for every
$x\in X$ there exists $y\in Y$ such that $\psi(x,z_0)\in\{ (y,z_0),(y,z_1)\}
$, since in both $X\times Z$ and $Y\times Z$ the points having second
coordinate equal to $z_0$ or $z_1$ are the only points that realize the
distance $r_0$ by our choice of $Z$. This defines a function $\varphi \colon
X\to Y$ with the property that for every $x\in X$ we have
$\psi(x,z_0)=(\varphi(x),z_i)$ for some $i\in\{ 0,1\} $. In order to prove
that $\varphi$ is an isometric embedding, let $x,x'\in X$ and let
$r=d_X(x,x')=d_{X\times Z}((x,z_0),(x',z_0))=d_{Y\times
Z}(\psi(x,z_0),\psi(x',z_0))$. Since we have $\psi(x,z_0)=(\varphi(x),z_i)$
and $\psi(x',z_0)=(\varphi(x'),z_j)$ for some $i,j\in\{ 0,1\} $, we obtain $r
= \max \{d_Y(\varphi(x),\varphi(x')),d_Z(z_i,z_j)\}$; recalling that $r\neq
r_0 = d_Z(z_0,z_1)$, the equality $d_Y(\varphi(x), \varphi(x'))=r$ follows.
So $\varphi$ is an isometric embedding. Assume now $\psi$ is surjective and
let $y\in Y$. So, suppose $\psi(x,z)=(y,z_0)$ and $\psi(x',z')=(y,z_1)$.
Recall that $r_0\notin A$ is not realized in $X\in \V_A$ and is realized only
by the pair $\{ z_0,z_1\} $ in $Z$. Since $d_{Y \times
Z}((y,z_0),(y,z_1))=r_0$ we have that $\{ z,z'\} =\{ z_0,z_1\} $. If $z=z_0$
then $\varphi(x)=y$, while if $z'=z_0$ then $\varphi(x')=y$. So $\varphi$ too
is surjective.
\end{proof}

\begin{remark}\label{remark}
One may be interested in analogues of Proposition~\ref{questionclemens}
obtained by restricting the relations of isometry and isometric embeddability
to a given class of Polish metric spaces.

The same proof shows that the conclusion of the proposition holds for
ultrametric, zero-dimensional, countable, locally compact, $\sigma $-compact,
and discrete spaces. This is the case because such classes are closed under
finite products and the operations $\oplus_r$, and whenever they have an
element in \( \V^\star_{A'} \), for every $r_0 \in A'$ they also have an
element in $ \V_{A'\setminus\{ r_0\} }$ and contain a space consisting of two
points at distance $r_0$. For the classes mentioned above the latter property
follows from Theorem \ref{necessary}: this is clear for ultrametric,
zero-dimensional, countable, and discrete spaces; for the classes of locally
compact and $\sigma $-compact spaces notice that removing a point from a
$\sigma $-compact subset of $\RR$ yields a $\sigma $-compact set. For
contrast, this argument does not work for compact metric spaces\footnote{In a
previous version of the paper we claimed that this was the case: we thank the
referee for pointing out our mistake.} and we do not know whether
Proposition~\ref{questionclemens} holds restricted to this class.

One can also restrict attention to spaces of a fixed dimension different from
$0$ and obtain the same results even if these classes are not closed under
finite products: this is because in the last paragraph of the proof of
Proposition~\ref{questionclemens} we can require (by Theorem
\ref{necessary}(1)) $Z$ to be zero-dimensional, so that $X \times Z$ has the
same dimension of $X$.
\end{remark}

We will use the following folklore construction to turn a countable graph
into a discrete metric space. Fix $r,r' \in \RR$ with $r<r'\leq 2r$. To each
graph $G$ on $\omega$ associate the metric space $X_G=(G,d_G)$ by letting
$d_G(a,b)=r$ if $(a,b)$ is an edge in $G$, and $d(a,b)=r'$ if $a\neq b$ and
$(a,b)$ is not an edge in $G$. The following Lemma is straightforward.

\begin{lemma}\label{graphs}
The map $G \mapsto X_G$ Borel reduces countable graph isomorphism and
countable graph embeddability to $\isom_{\{0,r,r'\}}$ and
$\sqsubseteq_{\{0,r,r'\}}$, respectively. Moreover, if we restrict the map to
nontrivial graphs (i.e.\ different from the empty graph and from the
countable clique), we get a reduction to $\isom^\star_{\{0,r,r'\}}$ and
$\sqsubseteq^\star_{\{0,r,r'\}}$.
\end{lemma}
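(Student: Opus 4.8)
The plan is to verify directly that the map $G \mapsto X_G$ has the required reduction properties, treating the four claims (isomorphism/embeddability, for arbitrary graphs and for nontrivial ones) in parallel, since they all follow from one structural analysis of what isometric maps between the spaces $X_G$ look like. First I would record that $X_G$ is a well-defined metric space: its only nonzero distances are $r$ and $r'$, and the triangle inequality holds precisely because $r < r' \leq 2r$ guarantees that every triple of distinct points has its distances $(\delta_1,\delta_2,\delta_3) \in \{r,r'\}^3$ forming a metric triple (the worst case is $r' \leq r + r$). Hence $X_G \in \V_{\{0,r,r'\}}$, and $D(X_G) = \{0,r,r'\}$ exactly when $G$ has at least one edge and at least one non-edge, i.e.\ when $G$ is nontrivial; this last observation is what yields the ``moreover'' clause, placing nontrivial $G$ into $\V^\star_{\{0,r,r'\}}$. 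Borelness of $G \mapsto X_G$ is immediate (it is even continuous) using the coding via $\mathcal{M}$ from Remark~\ref{rem:coding}, since $d_G$ is defined outright on the countable point set $\omega$.

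The heart of the argument is the observation that a map $f \colon \omega \to \omega$ is an isometric embedding of $X_G$ into $X_H$ if and only if it is an injective graph embedding of $G$ into $H$, and an isometry iff it is a graph isomorphism. Indeed, $f$ isometric means $d_G(a,b) = d_H(f(a),f(b))$ for all $a,b$; since $d_G$ takes value $0$ only on the diagonal, $f$ is injective; and since the two nonzero values $r, r'$ are distinct, $d_G(a,b) = r \iff d_H(f(a),f(b)) = r$, which says exactly that $(a,b)$ is an edge of $G$ iff $(f(a),f(b))$ is an edge of $H$ — that is, $f$ is an embedding $G \hookrightarrow H$. Surjectivity of $f$ as a map of vertex sets corresponds to surjectivity as a metric map, giving the isomorphism case. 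This is a genuine biconditional at the level of individual maps, so it transfers verbatim to the relational statements: $G \cong H \iff X_G \isom X_H$, $G$ embeds in $H \iff X_G \sqsubseteq X_H$, and the same with the ambient spaces restricted to the nontrivial case.

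One small point that needs care — and the only place the argument is not completely mechanical — is that $\isom$ and $\sqsubseteq$ on $F(\mathbb{U})$ refer to isometries/embeddings between the completions of these spaces, whereas $X_G$ as constructed lives on $\omega$; but $X_G$ is already complete (it is discrete: every point is isolated, as its nearest neighbours are at distance $r \geq r_0 > 0$ for a uniform $r_0 = \min\{r, r'-r\} > 0$, so every Cauchy sequence is eventually constant), hence $X_G$ \emph{is} its own completion and no spurious points or distances appear. Likewise, an isometric embedding between the Polish spaces $X_G, X_H$, having discrete domain, is just a map of the underlying countable sets, so nothing is lost. With these identifications in place the four reductions are exactly the four implications above, and the lemma follows.
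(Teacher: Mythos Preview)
Your proof is correct and is precisely the direct verification the paper has in mind when it declares the lemma ``straightforward'' without proof: the key observation is that distance-preserving maps between the $X_G$'s are exactly edge- and non-edge-preserving injections of vertex sets, and discreteness ensures completion adds nothing. One harmless slip: for the uniform gap you may simply take $r_0 = r$, since every nonzero distance is at least $r$; the quantity $\min\{r, r'-r\}$ is correct but unnecessarily complicated.
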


\subsection{Isometry}
The study of the complexity of $\isom_A^\star$ was started by Clemens in
\cite{ClemensPreprint}, where such relation is called $E_A$. Clemens' main
results about $ \isom_A^\star$ are summarized in the following theorem.

\begin{theorem}[{\cite[Theorem 23]{ClemensPreprint}}] \label{thmclemens}
Let \( A \in \D \).
\begin{enumerate}
\item If $A$ contains a right neighborhood of $0$, then $ \isom_A^\star$ is
    Borel bireducible with any complete orbit equivalence relation.
\item If $A$ is dense in some right neighborhood of $0$ but does not
    contain any of them, then both the action of the density ideal on $
    \pre{\omega }{2} $ and countable graph isomorphism are Borel reducible
    to $ \isom_A^\star$. In particular, $ \isom_A^\star$ is strictly above
    countable graph isomorphism with respect to \(\leq_B \).
\item If $A$ is not dense in any right neighborhood of $0$ and either $0$
    is a limit point of $A$ or \( A \) is not well-spaced, then $
    \isom_A^\star$ is Borel bireducible with countable graph isomorphism.
\item If $A = \{ 0\} \cup \{ r_i \mid i \in \omega \}$ with $0<r_i<r_{i+1}$
    is well-spaced, then \( \isom^\star_A \) is Borel bireducible with
    isomorphism between reverse trees (as defined in
    \cite{ClemensPreprint}).
\item If $A=\{ 0,r_0,\ldots ,r_{n-1}\}$ with $0<r_i<r_{i+1}$ is
    well-spaced, then $ \isom_A^\star$ is Borel bireducible with
    isomorphism between trees of height $n$. Thus these relations form a
    $\leq_B$-strictly increasing chain of equivalence relations
    classifiable by countable structures and they are strictly below
    countable graph isomorphism.
\end{enumerate}
\end{theorem}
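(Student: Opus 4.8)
The plan is to prove each of the five clauses by pairing an upper bound for $\isom^\star_A$ with a matching lower bound, and in fact several of the ingredients are already available in the excerpt. The upper bounds are mostly structural. In clause~(1), $\V^\star_A\subseteq F(\mathbb U)$ gives $\isom^\star_A\leq_B{\isom}$ on $F(\mathbb U)$, which is a complete orbit equivalence relation by \cite{Gao2003}; since all complete orbit equivalence relations are $\sim_B$, one half of~(1) is immediate. In clauses~(3),~(4),~(5) the hypothesis on $A$ forces, via Theorem~\ref{proponlyultrametric} and the remarks preceding it, that every space in $\V^\star_A$ is zero-dimensional with a clopen basis of balls of radii outside $A$ (when $0$ is a limit point of $\RR^+\setminus A$) or is even ultrametric (when $A$ is well-spaced); in either situation the isometry type is determined by the countable tree of these clopen pieces, so $\isom^\star_A$ is classifiable by countable structures and hence $\leq_B$ countable graph isomorphism. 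For clause~(2), the same ``tree of clopen pieces'' bound fails precisely because $A$ is dense in a right neighbourhood of $0$, and this is what both permits the relation to sit strictly above countable graph isomorphism and, for the upper estimate, is irrelevant since~(2) only asserts a lower bound.

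For the lower bounds in clauses~(2) and~(3) I would argue as follows. Whenever one can find $r<r'\le 2r$ in $A$ — which happens exactly when $A$ is not well-spaced, and in particular in the relevant subcase of~(3) and always in~(2) after shrinking — Lemma~\ref{graphs} yields countable graph isomorphism $\leq_B\isom^\star_{\{0,r,r'\}}$, and since $\{0,r,r'\}\in\D$ with $\{0,r,r'\}\subseteq A$, Proposition~\ref{questionclemens} transfers this to $\isom^\star_A$. In the remaining subcase of~(3), where $A$ is well-spaced but $0$ is a limit point, $\V^\star_A$ consists of ultrametric spaces with infinitely many distances accumulating at $0$, and countable graph isomorphism Borel reduces to isometry of such spaces by the tree-of-balls coding of \cite{GaoShao2011,ultrametric}. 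The extra lower bound in~(2), the action of the density ideal on $\pre{\omega}{2}$, is the delicate new construction: one codes a subset of $\omega$ by a sequence of ``bundles'' of points whose mutual distances are drawn from the dense part of $A$ and which accumulate to a single point at a distance that is a limit of elements of $A$ but lies outside $A$, arranging that membership of the coded set in the density ideal is detected isometrically. This uses essentially that $A$ is dense in some $(0,\varepsilon)$ yet contains no such interval; since the density ideal action is not classifiable by countable structures, composing gives $\isom^\star_A\not\leq_B$ countable graph isomorphism, i.e.\ $\isom^\star_A$ is strictly above it.

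The heart of the argument, and the step I expect to be the main obstacle, is the lower bound in clause~(1): reducing a complete orbit equivalence relation into $\isom^\star_A$. I would first push $\isom$ on $F(\mathbb U)$ into spaces with distances in $[0,\varepsilon]\subseteq A$ by applying the Polish metric preserving, hence (by Proposition~\ref{prop:automaticBorel}) Borel, function $d\mapsto \varepsilon d/(1+d)$, which is strictly increasing and therefore reflects isometry as well as preserving it. One then has to pad such a space $Y$ to a space $\hat Y\in\V^\star_A$ so that $Y\isom Y'\iff\hat Y\isom\hat Y'$, using a fixed $Z\in\V^\star_A$ supplied by Theorem~\ref{necessary}. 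The trouble is that the obvious candidates — the product $Y\times Z$ with the $\max$ metric, or a gluing $Y\oplus_r Z$ — preserve isometry but need not reflect it, because of the failure of cancellation for metric spaces (a large $Z$ can ``absorb'' $Y$); so $Z$, the way the pieces are attached, and an internal marking of the $Y$-part must be chosen so that every isometry of the padded spaces provably restricts to an isometry of the $Y$-parts. Carrying out this rigidification is the genuinely hard point, and is where Clemens' construction does the real work.

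Finally, for clauses~(4) and~(5) the well-spaced hypothesis gives $\V^\star_A=\U^\star_A$ by Theorem~\ref{proponlyultrametric}(2), so these are exactly the well-founded ultrametric cases with order type $\omega$ (resp.\ finite), and the assertions follow from the analysis recorded in Theorem~\ref{lemmaequivalence}: one identifies a space in $\U^\star_A$ with its labelled tree of balls — a reverse tree when $A$ is an increasing $\omega$-sequence, a tree of height $n$ when $|A|=n+1$ — so that isometry corresponds to isomorphism of these trees, and the strictly increasing chain of equivalence relations classifiable by countable structures that stays strictly below countable graph isomorphism comes from the known strict hierarchy for isomorphism of trees of bounded height together with Theorem~\ref{lemmaequivalence}(2).
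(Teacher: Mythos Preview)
The theorem is a citation from Clemens' preprint; the paper does not reprove it in full but only offers, in the paragraph immediately following, a \emph{simpler proof of part~(1)}.  Your sketch of parts~(2)--(5) is broadly reasonable and close in spirit to how these pieces are treated elsewhere in the paper (e.g.\ the argument in Theorem~\ref{appendixisom} for the lower bounds in~(2) and~(3), and the reduction to $\U^\star_A$ via Theorem~\ref{proponlyultrametric}(2) for~(4) and~(5)), so there is little to compare there.

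The real discrepancy is in part~(1), which you single out as ``the heart of the argument'' and ``the genuinely hard point''.  It is not.  After the rescaling $d\mapsto r\cdot d/(1+d)$ you land in $\V_{[0,r)}$, i.e.\ you have reduced isometry on $F(\mathbb U)$ to $\isom_{[0,r)}$ (no star).  Since $[0,r]\subseteq A$ gives $[0,r)\subsetneq A$, Proposition~\ref{questionclemens} --- the very result you invoke for parts~(2) and~(3) --- immediately yields $\isom_{[0,r)}\leq_B\isom^\star_A$.  That proposition already carries out the ``padding plus rigidification'' you worry about: its proof uses either a product $X\times Z$ with a carefully chosen $Z$ in which a distance $r_0\notin[0,r)$ is realized exactly once, or a gluing $X\oplus_{r_0}Z$ when $r_0$ can be taken above everything in $[0,r)$, and in either case the marked distance $r_0$ lets one recover the $X$-part from any isometry of the padded spaces.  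So there is no obstacle here and no need to appeal to any special construction of Clemens; the paper's two-line argument suffices.
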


Notice that the conditions considered in Theorem \ref{thmclemens} are
exhaustive. In fact, if $0$ is not isolated in $A$ or $A$ is not well-spaced
then we are in either case (1), (2), or (3). If $0$ is isolated in $A$ and \(
A \) is well-spaced then $A$ is well-founded, since strictly decreasing sequences in a
well-spaced set converge to $0$. Since a well-founded and well-spaced set
has order type \( \leq \omega \), we are either in case (4) or in case (5).

Proposition~\ref{questionclemens} may be used to give a simpler proof of part
(1) of Theorem~\ref{thmclemens}. Let $r$ be such that $[0,r]\subseteq A$. To
any Polish metric space $(X,d)$, associate the space $(X,d')$, where
$d'(x,y)=r \cdot \frac{d(x,y)}{1+d(x,y)} $. This reduces isometry on all
Polish metric spaces to $ \isom_{[0,r)}$, which in turn reduces to $
\isom_A^\star$ by Proposition~\ref{questionclemens}. Since isometry on
arbitrary Polish metric spaces is Borel bireducible with the complete orbit
equivalence relation by \cite[Theorem 1]{Gao2003}, we are done.

Theorem \ref{thmclemens} yields the following sufficient condition for
countable graph isomorphism being Borel reducible to \( \isom^\star_A \).

\begin{corollary} \label{corClemens}
Let \( A \in \D \). If \( A \) is ill-founded or not well-spaced, then
countable graph isomorphism Borel reduces to \( \isom^\star_A \).
\end{corollary}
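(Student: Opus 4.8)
The statement is a corollary of Theorem~\ref{thmclemens}, so the plan is simply to verify that in each of the two hypotheses --- $A$ ill-founded, $A$ not well-spaced --- we land in one of the cases of that theorem whose conclusion gives a reduction of countable graph isomorphism to $\isom^\star_A$. I would first dispose of the case where $A$ is not well-spaced: then, regardless of whether $0$ is isolated in $A$, we fall into case (1), (2), or (3) of Theorem~\ref{thmclemens}, as observed in the paragraph following that theorem. In each of these three cases countable graph isomorphism is Borel reducible to $\isom^\star_A$ (in (1) because it Borel reduces to any complete orbit equivalence relation, in (2) and (3) explicitly). So the conclusion holds.

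Next I would treat the case where $A$ is ill-founded. If $A$ is also not well-spaced we are already done by the previous paragraph, so assume $A$ is well-spaced. A well-spaced set is either finite or, by the remark before Theorem~\ref{proponlyultrametric}, $A\setminus\{0\}$ is a decreasing sequence converging to $0$, an unbounded increasing sequence, or the union of the two. Since $A$ is ill-founded, $A\setminus\{0\}$ contains a strictly decreasing sequence; such a sequence in a well-spaced set necessarily converges to $0$, so $0$ is a limit point of $A$. Hence $A$ is not dense in any right neighborhood of $0$ (a well-spaced set cannot be dense anywhere, since each interval $[2^n,2^{n+1})$ meets it in at most one point), yet $0$ is a limit point of $A$, placing us in case (3) of Theorem~\ref{thmclemens}; thus $\isom^\star_A$ is Borel bireducible with countable graph isomorphism, and in particular the latter Borel reduces to it.

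The only point requiring a little care --- and the one I would flag as the main, though minor, obstacle --- is checking the placement into the cases of Theorem~\ref{thmclemens}: one must make sure the hypotheses ``ill-founded'' and ``not well-spaced'' together with ``$0$ isolated'' versus ``$0$ a limit point'' exhaust exactly the cases covered, with no gap. The key small lemma is that a strictly decreasing sequence in a well-spaced set converges to $0$ (since consecutive terms satisfy $2r_{n+1} < r_n$, so $r_n < 2^{-n} r_0$), which forces $0$ to be a limit point whenever a well-spaced set is ill-founded. With that in hand the case analysis is complete and the corollary follows immediately.
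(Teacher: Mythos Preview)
Your proof is correct and follows exactly the route the paper intends: the corollary is stated as an immediate consequence of Theorem~\ref{thmclemens} together with the exhaustiveness observation right after it, and your case analysis (not well-spaced $\Rightarrow$ cases (1)--(3); ill-founded and well-spaced $\Rightarrow$ $0$ is a limit point and $A$ is nowhere dense $\Rightarrow$ case (3)) spells out precisely that derivation. The paper later gives, in the proof of Theorem~\ref{appendixisom}, an alternative self-contained argument via Theorem~\ref{isomuniversalstar}, Lemma~\ref{graphs}, and Proposition~\ref{questionclemens}, but your approach matches the primary one.
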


We will now show how the results from \cite{ultrametric} cited in Section
\ref{term} can be used to complete the description of the behaviour of
$\isom_A^\star$. We begin by proving the converse of Corollary
\ref{corClemens}, i.e.\ we characterize when countable graph isomorphism is
Borel reducible to \( \isom^\star_A\).

\begin{theorem} \label{appendixisom}
Let $A \in \D$. Countable graph isomorphism Borel reduces to $ \isom_A^\star$
if and only if $A$ is either ill-founded or not well-spaced.
\end{theorem}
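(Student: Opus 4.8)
The plan is to establish the two directions separately. The ``if'' direction is exactly Corollary~\ref{corClemens}, which follows from Clemens' Theorem~\ref{thmclemens}, so the whole content is in the ``only if'' direction: assuming $A \in \D$ is both well-founded and well-spaced, I want to show that countable graph isomorphism does \emph{not} Borel reduce to $\isom^\star_A$. The key observation is that a well-founded, well-spaced set has order type at most $\omega$ (strictly decreasing sequences in a well-spaced set converge to $0$, hence a well-founded well-spaced set cannot contain such a sequence, so it is order-isomorphic to an ordinal $\leq \omega$), and moreover $0$ is isolated in $A$ (again because $0$ being a limit point would force a strictly decreasing sequence in $A$). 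By Theorem~\ref{proponlyultrametric}(2,3), every space in $\V^\star_A$ is then ultrametric and discrete, so in fact $\V^\star_A = \U^\star_A$ and $\isom^\star_A$ coincides with the relation $\isom^\star_\alpha$ from \cite{ultrametric}, where $\alpha$ is the order type of $A$.

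Now I invoke the machinery recalled in Section~\ref{term}. By Theorem~\ref{lemmaequivalence}(1), $\isom^\star_\alpha \sim_B \isom_\alpha$, and by Theorem~\ref{lemmaequivalence}(2) the relations $\isom_\alpha$ for $1 \leq \alpha < \omega_1$ form a $\leq_B$-strictly increasing chain of \emph{Borel} equivalence relations that is cofinal below countable graph isomorphism. Since countable graph isomorphism is not itself Borel (it is a properly analytic equivalence relation), and each $\isom_\alpha$ is Borel, we cannot have countable graph isomorphism $\leq_B \isom_\alpha$: a relation Borel reducible to a Borel equivalence relation is Borel. Hence countable graph isomorphism does not Borel reduce to $\isom^\star_A$, as required.

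I should be slightly careful about the degenerate cases $\alpha$ finite versus $\alpha = \omega$, but these are uniformly covered: in every case $\alpha < \omega_1$ and the cited results of \cite{ultrametric} apply, giving that $\isom^\star_A$ is a Borel equivalence relation. (When $A = \{0\}$ the space $\V^\star_A$ contains only the one-point space and $\isom^\star_A$ is trivial; this is consistent with $\alpha = 1$.) The main obstacle, such as it is, is purely bookkeeping: one must verify that the hypotheses ``well-founded'' and ``well-spaced'' are precisely what is needed to land inside the scope of Theorem~\ref{proponlyultrametric}(2) and (3) simultaneously, so that $\V^\star_A$ really equals $\U^\star_A$ and the results on $\isom^\star_\alpha$ become available; once that identification is made, the non-reducibility is immediate from the Borelness of $\isom_\alpha$ together with the non-Borelness of graph isomorphism. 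No genuinely new construction is needed for this theorem — it is a synthesis of Clemens' results with those of \cite{ultrametric}.
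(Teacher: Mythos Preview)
Your proposal is correct and follows essentially the same approach as the paper: for the ``only if'' direction you identify $\V^\star_A = \U^\star_A$ via Theorem~\ref{proponlyultrametric}(2) and then invoke the Borelness of $\isom_\alpha$ from Theorem~\ref{lemmaequivalence} to block reducibility of the non-Borel graph-isomorphism relation. Your write-up is somewhat more explicit than the paper's (you also cite Theorem~\ref{proponlyultrametric}(3) and Theorem~\ref{lemmaequivalence}(1), which are not strictly needed once (2) gives $\V^\star_A = \U^\star_A$), but the argument is the same.
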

\begin{proof}
One direction is Corollary~\ref{corClemens}, but for the reader's convenience
we give here an alternative and simpler proof. If $A$ is ill-founded, let $(
r_n)_{n\in\omega }$ be a decreasing sequence in $A$. Since countable graph
isomorphism Borel reduces to $ \isom_{( r_n)_{n \in \omega}}^\star$ by
Theorem \ref{isomuniversalstar}(1), it suffices to use
Proposition~\ref{questionclemens} when $( r_n)_{n\in\omega } \subsetneq A$.
If instead $A$ is not well-spaced fix $r,r'\in A$ with $r<r'\leq 2r$. Lemma
\ref{graphs} gives a Borel reduction of isomorphism between nontrivial
countable graphs to $ \isom^\star_{\{ 0,r,r'\} }$. Then apply
Proposition~\ref{questionclemens} if \( \{ 0,r,r' \} \subsetneq A \).

Finally, assume that $A$ is well-founded and well-spaced. Then $ \V_A^\star=
\U_A^\star$ by Theorem \ref{proponlyultrametric}(2), and countable graph
isomorphism does not Borel reduce to $ \isom_A^\star$ because the latter is
Borel by Theorem \ref{lemmaequivalence}(2).
\end{proof}

We now have the following fairly complete picture of the structure of the
relations $ \isom^\star_A$. Notice that conditions (1)--(4) exhaust all
possible cases for $A$.

\begin{theorem}\label{isomstarA}
Let $A \in \D$.
\begin{enumerate}
\item The relations \( \isom^\star_A \) with \( A \) well-founded and
    well-spaced form a strictly increasing chain of order type \( \omega +
    1 \) under $\leq_B$, consisting of Borel equivalence relations, and
    they are Borel reducible to all the other \( \isom^\star_{A'} \) with
    $A' \in \D$.
\item If $A$ is either ill-founded or not well-spaced, and moreover $A$ is
    not dense in any right neighborhood of $0$, then $ \isom_A^\star$ is
    Borel bireducible with countable graph isomorphism.
\item If $A$ is dense in some right neighborhood of $0$ but does not
    contain any such neighborhood, then $\isom_A^\star$ is strictly above
    countable graph isomorphism and Borel reducible to any complete  orbit
    equivalence relation.
\item If $A$ contains a right neighborhood of $0$, then $ \isom_A^\star$ is
    Borel bireducible with any complete  orbit equivalence relation.
\end{enumerate}
\end{theorem}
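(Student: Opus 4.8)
The plan is to verify that the four listed cases are exhaustive and then to assemble each one from results already established. For exhaustiveness, I would argue exactly as in the paragraph following Theorem~\ref{thmclemens}: if $A$ is well-founded and well-spaced then (since strictly decreasing sequences in a well-spaced set converge to $0$) $A$ has order type $\leq \omega$, which is case (1); if $A$ is ill-founded or not well-spaced, then either $A$ is not dense in any right neighborhood of $0$ (case (2)), or it is dense in some right neighborhood of $0$ — and then either it contains such a neighborhood (case (4)) or it does not (case (3)). So every $A \in \D$ falls into exactly one of (1)--(4).

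For case (1): when $A$ is well-founded and well-spaced, Theorem~\ref{proponlyultrametric}(2) gives $\V^\star_A = \U^\star_A$, so $\isom^\star_A$ coincides with $\isom^\star_\alpha$ where $\alpha \leq \omega$ is the order type of $A$. By Theorem~\ref{lemmaequivalence}(1) these are classifiable by countable structures and by Theorem~\ref{lemmaequivalence}(2) (together with the fact that $\isom_\alpha \sim_B \isom^\star_\alpha$) they form a strictly increasing chain; the chain has order type $\omega+1$ because $\alpha$ ranges over $\{1,2,\dots\} \cup \{\omega\}$. For the last clause of (1) — that these relations Borel reduce to every other $\isom^\star_{A'}$ — I would use Proposition~\ref{questionclemens}: given any $A' \in \D$ not of the form in case (1), one checks $A'$ contains a well-founded well-spaced subset $A$ of any prescribed order type $\leq\omega$ (e.g.\ a suitable decreasing sequence if $0$ is not isolated, or finitely many well-spaced points otherwise — in every non-(1) case such a subset is available), and then $\isom^\star_A \leq_B \isom^\star_{A'}$. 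One should double-check the edge cases here, e.g.\ that even when $A'$ is finite and well-spaced of length $n$ one still gets all $\isom^\star_\alpha$ for $\alpha \leq n$; this is the one spot requiring a little care.

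For cases (2), (3), (4): case (4) is Theorem~\ref{thmclemens}(1) verbatim (or the simpler argument via Proposition~\ref{questionclemens} and \cite{Gao2003} reproduced in the text). Case (3) is Theorem~\ref{thmclemens}(2): the lower bound "strictly above countable graph isomorphism" is there, and the upper bound "Borel reducible to any complete orbit equivalence relation" holds because $\isom^\star_A$ is an analytic equivalence relation on a Borel space and, more to the point, $\V^\star_A \subseteq \V^\star_{[0,r]}$ for a suitable $r$ after composing with a metric-preserving rescaling — actually the cleanest route is: $\isom^\star_A \leq_B \isom$ on all Polish metric spaces (trivially, as $\V^\star_A \subseteq F(\mathbb U)$), and the latter is a complete orbit equivalence relation by \cite{Gao2003}. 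Case (2) combines the two bounds: the lower bound (countable graph isomorphism $\leq_B \isom^\star_A$) is Theorem~\ref{appendixisom}, since $A$ ill-founded or not well-spaced is exactly its hypothesis; the upper bound ($\isom^\star_A \leq_B$ countable graph isomorphism) is Theorem~\ref{thmclemens}(3), whose hypothesis "$A$ not dense in any right neighborhood of $0$ and ($0$ a limit point of $A$ or $A$ not well-spaced)" is met because in case (2) we have assumed $A$ ill-founded or not well-spaced and not dense in any right neighborhood of $0$ — and $A$ ill-founded with $0$ isolated forces $0$ to still be a limit point via some infinite descending sequence, so the disjunction holds.

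The main obstacle I expect is purely bookkeeping: confirming that in the "not case (1)" situations one genuinely finds, inside $A'$, well-spaced well-founded subsets realizing \emph{every} order type $\leq\omega$ needed for the cofinality/universality clause of (1), and that the hypotheses of Theorem~\ref{thmclemens}(3) are actually implied by the hypotheses we state in case (2) rather than being merely similar. Neither is deep, but both require matching up the case divisions of \cite{ClemensPreprint} with ours.
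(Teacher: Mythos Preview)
Your treatment of cases (2)--(4) matches the paper's: those cases are read off from Theorem~\ref{thmclemens} together with Theorem~\ref{appendixisom}. (Your phrasing in case~(2), ``$A$ ill-founded with $0$ isolated forces $0$ to still be a limit point'', is self-contradictory as written; the correct check is that if $A$ is ill-founded \emph{and well-spaced} then any strictly decreasing sequence must converge to $0$, so $0$ is a limit point, while if $A$ is not well-spaced the disjunction in Theorem~\ref{thmclemens}(3) holds directly. Either way the hypothesis of Theorem~\ref{thmclemens}(3) is met.)

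There is, however, a genuine gap in your argument for the final clause of (1). You propose to show $\isom^\star_A \leq_B \isom^\star_{A'}$ (for $A$ well-founded well-spaced and $A'$ not of that form) by finding inside $A'$ a well-founded well-spaced subset of the same order type as $A$ and applying Proposition~\ref{questionclemens}. This fails already for $A' = \{0,r,r'\}$ with $r < r' \leq 2r$: such an $A'$ is not well-spaced, hence not in case (1), but its only well-spaced subsets have order type at most $2$. So your method cannot produce $\isom^\star_\alpha \leq_B \isom^\star_{A'}$ for any $\alpha \geq 3$, let alone $\alpha = \omega$. The edge case you flag (``$A'$ finite and well-spaced'') is not the problematic one --- that $A'$ lies in case (1) itself --- the real obstruction is $A'$ finite and \emph{not} well-spaced.

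The paper avoids this by routing through countable graph isomorphism rather than through subsets: by Theorem~\ref{lemmaequivalence}(1) each $\isom^\star_\alpha$ (for $\alpha \leq \omega$) is classifiable by countable structures, hence Borel reducible to countable graph isomorphism; and by Theorem~\ref{appendixisom}, countable graph isomorphism Borel reduces to $\isom^\star_{A'}$ whenever $A'$ is ill-founded or not well-spaced. Composing gives the reduction with no constraint on the size or structure of $A'$.
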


\begin{proof}
Parts (2)--(4) follow from Theorem~\ref{thmclemens} and the observations
following it, so let us  prove (1).

Let $\alpha$ be the order type of $A$. The fact that $A$ is well-spaced
implies \( 1 \leq \alpha \leq \omega \) and $\V^\star_A = \U^\star_A$ (by
Theorem \ref{proponlyultrametric}(2)). Thus ${\isom^\star_A}$ and
${\isom^\star_{\alpha}}$ are the same relation. By Theorem \ref{lemmaequivalence}(2), when \( \alpha
\) varies between \( 1 \) and \( \omega \) these equivalence relations form a
strictly increasing chain of length $\omega+1$ under $\leq_B$ and they are
Borel equivalence relations. Finally, since \( \isom^\star_{\alpha} \) Borel
reduces to countable graph isomorphism by Theorem \ref{lemmaequivalence}(1),
it follows from Theorem~\ref{appendixisom} that \( \isom^\star_A \) reduces
to any other \( \isom^\star_{A'} \) for \( A' \) not satisfying the
conditions of (1).
\end{proof}

We will now partially answer also the second part of \cite[Question
3]{ClemensPreprint}, which asked whether ${\isom_A} \sim_B {\isom^\star_A}$
for every \( A \in \D \).

\begin{theorem}\label{thm:isomA}
Let $A \in \D$ satisfy at least one of the following conditions:
\begin{enumerate}[(i)]
  \item $A$ is not dense in any right neighborhood of $0$;
  \item $A$ has maximum;
  \item there exists $f \colon A \to A$ which is Polish metric preserving,
      injective, and non-surjective.
\end{enumerate}
Then ${\isom_A} \sim_B {\isom^\star_A}$.
\end{theorem}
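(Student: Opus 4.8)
The plan is to show ${\isom_A} \leq_B {\isom^\star_A}$ in each of the three cases, since the reverse reduction $\isom^\star_A \leq_B \isom_A$ is trivial (as $\V^\star_A \subseteq \V_A$). In all cases the strategy is to find a Borel map $X \mapsto \hat X$ that sends an arbitrary $X \in \V_A$ to a space $\hat X \in \V^\star_A$ in such a way that $X \isom Y \iff \hat X \isom \hat Y$; the point is always to "fill in'' the missing distances of $D(X)$ without creating new isometries, typically by attaching a fixed auxiliary space in a rigid way using the $\oplus_r$ construction and Proposition~\ref{1531426}. For case (i), $A$ not dense in any right neighborhood of $0$, the key observation is that then $\V_A^\star = \U_A^\star$ or more simply one can combine Theorem~\ref{isomstarA} with Theorem~\ref{proponlyultrametric}: in the well-founded well-spaced subcase $\isom_A^\star$ and $\isom_A$ should both already be handled by the chain in Theorem~\ref{lemmaequivalence}(1) together with Theorem~\ref{thmclemens}(5), while if $A$ is ill-founded or not well-spaced then $\isom_A^\star$ is Borel bireducible with countable graph isomorphism (Theorem~\ref{isomstarA}(2)), and one checks $\isom_A \leq_B$ countable graph isomorphism directly, hence $\isom_A \leq_B \isom_A^\star$.

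For case (ii), suppose $A$ has a maximum $m$. Then for any $X \in \V_A$, form $\hat X = X \oplus_m Z$ where $Z \in \V_A^\star$ is a fixed space; by Fact~\ref{fact:oplus}, $D(\hat X) = D(X) \cup D(Z) \cup \{m\} = A$, so $\hat X \in \V_A^\star$, and the map is Borel by Proposition~\ref{1531426}. Since $m = \max A$ is realized between every point of $X$ and every point of $Z$ but need not behave specially, I would instead want $m$ to be realized \emph{only} across the $X$–$Z$ cut; this is automatic here precisely because $d_{\hat X}(x,z) = \max\{d_X(x,\bar x), d_Z(z,\bar z), m\} = m$ as all other distances are $\leq m$. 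An isometry $\hat X \to \hat Y$ must then either preserve or swap the two pieces; since $Z$ is a fixed space it is isometric to itself, and in the swapped case one gets $X \isom Z \isom Y$ anyway, so in both cases $X \isom Y$, exactly as in the first paragraph of the proof of Proposition~\ref{questionclemens}. Conversely an isometry $X \to Y$ extends by the identity on $Z$. This gives ${\isom_A} \leq_B {\isom_A^\star}$.

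For case (iii), let $f \colon A \to A$ be Polish metric preserving, injective, and non-surjective, and pick $r_0 \in A \setminus f[A]$ with $r_0 \neq 0$ (if $0 \notin f[A]$ then $f(0) \neq 0$, contradicting that $f$ is metric preserving since $f \circ d$ must be a metric, so in fact $0 \in f[A]$ and we may choose $r_0 \neq 0$). By Proposition~\ref{prop:automaticBorel} the map $(X,d) \mapsto (X, f \circ d)$ is Borel, it sends $\V_A$ into $\V_{f[A]} \subseteq \V_A$, and since $f$ is injective it preserves and reflects isometry. Now fix $Z \in \V_A^\star$ and compose with $X \mapsto (X, f\circ d) \oplus_{r_0} Z$: by Fact~\ref{fact:oplus} the distance set becomes $f[D(X)] \cup D(Z) \cup \{r_0\} = A$ since $D(Z) = A$, so the target lies in $\V_A^\star$; and because $r_0 \notin f[A]$, the distance $r_0$ is realized only across the cut inside the $(X,f\circ d)$ part, so the rigidity argument of Proposition~\ref{questionclemens} applies verbatim to show the composite is a reduction of $\isom_A$ to $\isom_A^\star$.

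The main obstacle I anticipate is case (iii): one must be careful that $r_0$ really is never realized \emph{within} $(X, f\circ d)$ — which needs $r_0 \notin f[A]$, not merely $r_0 \notin A$ — and that composing the metric-transformation step with the $\oplus_{r_0}$ step does not spoil either the Borelness (handled by Proposition~\ref{prop:automaticBorel} and Proposition~\ref{1531426}) or the injectivity-on-isometry-classes of $f$. Once the distance $r_0$ is isolated to the correct side of the gluing, the combinatorial verification that isometries must respect the decomposition is routine and identical to the one already carried out in Proposition~\ref{questionclemens}.
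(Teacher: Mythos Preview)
Your overall plan and case (i) match the paper. For case (iii) the idea is right, but you should simply invoke Proposition~\ref{questionclemens} rather than redo it: once the metric change gives ${\isom_A} \leq_B {\isom_{f[A]}}$ with $f[A] \subsetneq A$, that proposition yields ${\isom_{f[A]}} \leq_B {\isom^\star_A}$ directly, and your hand-made $\oplus_{r_0}$ argument with $Z \in \V^\star_A$ inherits the same flaw as case (ii) below.

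The genuine gap is case (ii). With $Z \in \V^\star_A$ the map $X \mapsto X \oplus_m Z$ need not be a reduction. Take $A=\{0,1,2\}$, $m=2$, and let $Z$ consist of countably many pairs $\{a_n,b_n\}$ with $d(a_n,b_n)=1$ and all other distances $2$, so that $Z \in \V^\star_A$. If $X$ is a single such pair and $Y$ is two such pairs (cross-distances $2$), then $\hat X$ and $\hat Y$ are each countably many distance-$1$ pairs with all inter-pair distances $2$, hence $\hat X \isom \hat Y$ while $X \not\isom Y$. The ``preserve or swap'' argument you borrow from Proposition~\ref{questionclemens} requires the gluing distance to be absent from $D(X)$, $D(Y)$ \emph{and} $D(Z)$; here $m=\max A$ may lie in all three, so nothing forces an isometry to respect the decomposition.

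The paper instead takes $Z \in \V^\star_{A \setminus \{m\}}$, so that $m \notin D(Z)$; but even then $m$ can occur inside $X$ and $Y$, so a clean dichotomy still fails. What the paper shows is that $X_0 := \psi^{-1}(Z)$ has all internal distances $< m$ and therefore lies entirely in $X$ or entirely in $Z$. In the latter case $X_0=Z$ and $\psi\restriction X$ is the desired isometry; in the former, the isometry $X \to Y$ must be built as $\psi\restriction(X\setminus X_0)\cup(\psi\circ\psi)\restriction X_0$, with a direct verification that the mixed distances equal $m$ on both sides. This composition trick, not a swap argument, is the actual content of case (ii).
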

\begin{proof}
For any $A \in \D$ we have ${\isom^\star_A} \leq_B {\isom_A}$ because
$\V^\star_A \subseteq \V_A$.\smallskip

Assume first that $A$ is not dense in any right neighborhood of $0$. In this
case ${\isom_A}$ is classifiable by countable structures. In fact the
argument of \cite[Proposition 18]{ClemensPreprint} applies not only to
${\isom^\star_A}$ but also to ${\isom_A}$. We distinguish two cases. If $A$
is either ill-founded or not well-spaced, then ${\isom^\star_A}$ is Borel
bireducible with countable graph isomorphism by Theorem \ref{isomstarA}(2),
and hence ${\isom_A} \leq_B {\isom^\star_A}$. If instead $A$ is well-founded
and well-spaced then $\V_A = \U_A$ and $\V^\star_A = \U^\star_A$, so that we
can use Theorem \ref{lemmaequivalence}(1).\smallskip

Now assume that $r = \max A$. Since $ \V_{\{ 0\} }= \V_{\{ 0\} }^\star$, we
may assume that $A$ has more than one element. Fix $Z \in \V^\star_{A
\setminus \{r\}}$ and consider the map sending $X \in \V_A$ to $X' = X
\oplus_r Z \in \V^\star_A$, so that by case assumption $d_{X'}(x,z) =r$
whenever $x \in X$ and $z \in Z$. The map is Borel and we claim that it
witnesses ${\isom_A} \leq_B {\isom^\star_A}$. To show this we use an argument
similar to the one employed in the first part of the proof of Theorem
\ref{lemmaequivalence}(1) in \cite{ultrametric}.
Fix \(
X,Y \in \V_A \). First assume that \( \varphi \colon X \to Y \) witnesses \(
X \isom_A Y \): then \( {\varphi} \cup {\operatorname{id}_Z} \) is a witness
of \( X' \isom^\star_A Y' \).

Conversely, let \( \psi \) be an isometry between \( X' \) and \( Y' \), and
let \( X_0  = \psi^{-1}(Z) \), so that \( \psi (X' \setminus  X_0 ) = Y \).
Notice that \( d_{X'}(x_0,x_1) < r \) for every \( x_0,x_1 \in X_0 \) since
\( \psi(x_0),\psi(x_1) \in Z \in \V^\star_{A \setminus \{r\}} \). Hence, by
construction of \( X' \), either \( X_0 \subseteq Z \) or \( X_0 \subseteq X
\). In the former case \( X_0 = Z \) because $\psi(Z)$ cannot intersect both
$Y$ and $Z$, since any two points of $Z$ are less than $r$ apart. Thus \(
\psi(X) = \psi(X' \setminus X_0) = Y \) and \( \psi \restriction X \) is an
isometry between \( X \) and \( Y \).
If instead $X_0 \subseteq X$, we claim that \( \varphi =  \psi \restriction
(X \setminus X_0) \cup (\psi \circ \psi) \restriction X_0 \) witnesses \( X
\isom_A Y \). Notice that \( \varphi \) is well-defined by the fact that by
definition \( \psi( X_0) = Z \subseteq X' \). For the same reason, the range
of \( \varphi \) equals the range of \( \psi \restriction (X \setminus X_0)
\cup \psi \restriction Z  =  \psi \restriction (X' \setminus X_0) \), thus \(
\varphi \) is a surjection from \( X \) onto \( Y \).
Finally, we check that \( \varphi \) preserves distances. It is clearly
enough to show that for \( x \in X \setminus X_0 \) and \( x' \in X_0 \) we
have \( d_X(x,x') = d_Y(\varphi(x),\varphi(x')) \). Since \( \psi(x) \in Y \)
and \( \psi(x') \in Z \), we have \( d_{Y'}(\psi(x),\psi(x')) = r \), whence
\( d_X(x,x') = d_{X'}(x,x') = d_{Y'}(\psi(x),\psi(x')) = r \). Since \(
\psi(x') \in Z \subseteq X'\) and \( x \in X \), \( d_{X'}(x, \psi(x')) = r
\), therefore \( d_{Y'}(\psi(x), \psi(\psi(x'))) = r \). Since \( \psi(x) =
\varphi(x) \) and \( \psi(\psi(x')) = \varphi(x') \) by definition of \(
\varphi \), we have \( d_Y(\varphi(x),\varphi(x')) =
d_{Y'}(\varphi(x),\varphi(x')) = r = d_X(x,x') \), as required.
\smallskip

Finally, assume that $f$ is as in (iii) and let $A' \subsetneq A$ be the
range of $f$. We map any $X \in \V_A$  to $X' \in \V_{A'}$ by composing the
metric \( d_X \) with $f$: the fact that $f$ is Polish metric preserving
makes sure that \( d_{X'} =  f \circ d_X \) is still a Polish metric space.
Consider the map \( X \mapsto X' \). It is Borel because $f$ is such by
Proposition \ref{prop:automaticBorel}, and, since $f$ is injective, witnesses
${\isom_A} \leq_B {\isom_{A'}}$. By Proposition~\ref{questionclemens} we have
${\isom_{A'}} \leq_B {\isom_A^\star}$, and thus ${\isom_A} \leq_B
{\isom_A^\star}$.
\end{proof}

It is not obvious when Condition (iii) of Theorem \ref{thm:isomA} holds.
Notice that a sufficient condition for a nondecreasing $f \colon A \to \RR$
to be metric preserving is that for all $r,s,t\in A$, if $s \le t < r \le
s+t$ then $f(r)\le f(s)+f(t)$. Using this we see that Condition (iii) holds
for instance when $A = \QQ^+$ or $A = (\RR^+ \setminus \QQ) \cup \{0\}$, as
witnessed by the map $f(r) = r/(1+r)$. The same is true when $A$ contains an
interval $[a,b]$, as witnessed by
\[
f(r) =
\begin{cases}
r & \text{if $r<a$;}\\
\displaystyle{a + (b-a)\frac{r-a}{1+(r-a)}} & \text{if $r \geq a$.}
\end{cases}
\]
(Notice that $b$ does not belong to the range of $f$.) On the other hand,
Condition (iii) can fail even for countable sets: if $A = \{0\} \cup \{2^k
\mid k \in \ZZ\}$ then every injective Polish metric preserving function $f:A
\to A$ satisfies $f(2^k) = 2^{k+z}$ for some $z \in \ZZ$, and hence  is
surjective. Notice however that this particular $A$ satisfies Condition (i)
of Theorem \ref{thm:isomA}.

In fact, we do not know if there exists $A \in \D$ which does not satisfy any
of the conditions of Theorem \ref{thm:isomA}. However if such an $A$ exists,
it must be uncountable. To show this, we need the following fact, which might
be of independent interest.

\begin{proposition}\label{prop:countable}
Let $A \in \D$ be such that $A$ is dense in $(a,b)$ and $A \cap (a, +\infty)$
is countable for some $a,b$ with $0<a<b$. Then Condition (iii) of Theorem \ref{thm:isomA}
holds, i.e.\ there exists $f \colon A \to A$ which is Polish metric
preserving, injective, and non-surjective.
\end{proposition}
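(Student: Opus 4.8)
The plan is to construct $f$ by cutting $A$ just above $a$. Fix reals $a<a'<b'<b$ with $b'<2a'$ --- this is possible, e.g.\ pick $a'$ anywhere in $(a,b)$ and then $b'$ close enough to $a'$ --- and set $A_{\mathrm{low}}=A\cap[0,a']$ and $A_{\mathrm{high}}=A\cap(a',+\infty)$. Since $A$ is dense in $(a',b')\subseteq(a,b)$ the set $A_{\mathrm{high}}$ is infinite, and it is countable by hypothesis, so enumerate it as $A_{\mathrm{high}}=\{p_n\mid n\in\omega\}$. I would take $f$ to be the identity on $A_{\mathrm{low}}$ and define $f\restriction A_{\mathrm{high}}$ by recursion on $n$ so that $f(A_{\mathrm{high}})\subseteq(a',b')\cap A$, the map $f$ is strictly $1$-Lipschitz on $A_{\mathrm{high}}$, $f(x)<x$ for all $x\in A_{\mathrm{high}}$, and $f$ is injective on $A_{\mathrm{high}}$. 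Granting this, $f$ is injective on $A$ (the two pieces have ranges inside $[0,a']$ and $(a',b')$ and each is injective), $f$ is not surjective because $f(A)\subseteq[0,b')$ while density of $A$ in $(a,b)$ yields a point of $A$ in $(b',b)$, and the limit condition \eqref{15031241} of Proposition~\ref{Polish_metric_pres} holds since $f$ is the identity on $A\cap[0,a']$ and $f(A_{\mathrm{high}})$ is bounded away from $0$. Hence $f$ will be Polish metric preserving as soon as it is metric preserving.

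To verify metric-preservation I would use that, as in the proof of Proposition~\ref{Polish_metric_pres}, it suffices to check that $f^{-1}(0)=\{0\}$ (clear) and that $(f(r),f(s),f(t))$ is a metric triple whenever $(r,s,t)$ is a metric triple with entries in $A$. Argue by cases according to how many of $r,s,t$ lie in $A_{\mathrm{high}}$. If none, $f$ is the identity on all three. If exactly one, it is the largest of the three and $f$ strictly decreases it while fixing the other two, so all three triangle inequalities are preserved. If exactly two, say $r\geq s$ both in $A_{\mathrm{high}}$ and $t\in A_{\mathrm{low}}$, then $r$ is the largest, the inequality $r\leq s+t$ gives $|r-s|\leq t$, hence $|f(r)-f(s)|\leq|r-s|\leq t=f(t)$ by $1$-Lipschitzness, while $f(t)\leq a'<f(s)\leq f(r)+f(s)$. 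If all three lie in $A_{\mathrm{high}}$, then $f(r),f(s),f(t)\in(a',b')$ and the largest of them is $<b'<2a'<$ the sum of the other two; this is the only place where $b'<2a'$ is needed.

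The main obstacle is showing that the recursion never gets stuck. At stage $n$ one must choose $f(p_n)$ in $A$, inside the interval $(a',\min(p_n,b'))$ --- which forces $a'<f(p_n)<p_n$ and $f(p_n)<b'$ --- and inside each Lipschitz ball $\bigl(f(p_i)-|p_n-p_i|,\ f(p_i)+|p_n-p_i|\bigr)$ for $i<n$, avoiding the finitely many values $f(p_0),\dots,f(p_{n-1})$. These finitely many open intervals have a common point by the one-dimensional Helly property, so it suffices to see that they intersect pairwise: two Lipschitz balls intersect because $|f(p_i)-f(p_j)|<|p_i-p_j|\leq|p_i-p_n|+|p_n-p_j|$ by the inductive invariant and the triangle inequality, and a Lipschitz ball meets $(a',\min(p_n,b'))$ because $a'<f(p_i)$ and, using $f(p_i)<p_i$ and $f(p_i)<b'$, one checks $f(p_i)-|p_n-p_i|<\min(p_n,b')$. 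So the admissible set is a nonempty open subinterval of $(a',b')$, and since $A$ is dense in $(a',b')$ it contains infinitely many points of $A$; pick $f(p_n)$ among them avoiding the finitely many forbidden values, which preserves all the invariants. Together with Proposition~\ref{Polish_metric_pres} this produces an $f$ witnessing Condition (iii).
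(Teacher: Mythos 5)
Your proof is correct, and it reaches the conclusion by a genuinely different mechanism than the paper, although the high-level skeleton is the same: both take $f$ to be the identity below a cut, recursively map the countable set of large distances into $A\cap(a,b)$ using density, get non-surjectivity from a point of $A$ near $b$ that is never hit, and get condition \eqref{15031241} because the modified values stay bounded away from $0$. The difference lies in how the triangle inequalities are secured. The paper keeps $f$ strictly increasing and imposes a concavity-type invariant (the slopes of successive chords decrease), so metric preservation follows from the sufficient criterion for nondecreasing functions recorded after Theorem~\ref{thm:isomA}, and injectivity is automatic from strict monotonicity. You instead abandon monotonicity, compress $A\cap(a',+\infty)$ into a short interval $(a',b')$ with $b'<2a'$ while keeping $f$ strictly $1$-Lipschitz there, and verify the three triangle inequalities directly by cases (the condition $b'<2a'$ disposing of the all-high case, the $1$-Lipschitz bound of the mixed case); the price is that you must argue the recursion never gets stuck, which your Helly-property computation does correctly, and that injectivity must be enforced by avoiding finitely many values at each stage. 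One small point you should make explicit: in the case of exactly one entry in $A\cap(a',+\infty)$, merely decreasing the largest entry of a metric triple does not by itself preserve the two inequalities in which that entry appears on the right-hand side; what saves you is the lower bound $f(r)>a'\geq s,t$, which your construction does provide since $f$ maps the high part into $(a',b')$ and fixes the low part. With that clause added, the argument is complete.
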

\begin{proof}
We may assume that $a,b \in A$. We will define $f$ strictly increasing which
is the identity up to $a$ and maps $A \cap (a, +\infty)$ into $(a,b)$, so
that $b \in A$ is not in the range of $f$.

Let $(a_n)_{n\in \omega}$ be an enumeration without repetitions of $A \cap
(a, +\infty)$. Since $A$ is dense in $(a,b)$ we can recursively define
$f(a_n)$ so that:
\begin{itemize}
  \item $f(a_n)<a_n$;
  \item if $c_0=a < c_1 < \dots < c_{n+1}$ enumerate in increasing order
      $\{a\} \cup \{ a_m \mid m \leq n\}$ then the slope of the segment
      with endpoints $(c_i, f(c_i))$ and $(c_{i+1}, f(c_{i+1}))$ is larger
      than the slope of the segment with endpoints $(c_{i+1}, f(c_{i+1}))$
      and $(c_{i+2}, f(c_{i+2}))$ for each $i<n$.
\end{itemize}

It remains to prove that $f$ is Polish metric preserving. Since $f$ is
nondecreasing, by Proposition \ref{Polish_metric_pres} and the observation
after Theorem \ref{thm:isomA}, it suffices to show that for all $r,s,t\in A$,
if $s \le t < r \le s+t$ then $f(r)\le f(s)+f(t)$. By construction we have
that if $x,y,z \in A$ with $x<y<z$ then $S(y,z) \leq S(x,z) \leq S(x,y)$
where $S(v,w)$ is the slope of the segment with endpoints $(v, f(v))$ and
$(w, f(w))$. From this it follows that for $r,s,t$ as above $S(t,r) \leq
S(0,s)$, whence
\[
f(r) =f(t) + (r-t) \cdot S(t,r) \leq f(t) + s \cdot S(t,r) \leq f(t) + s \cdot S(0,s) = f(t) + f(s).
\]

\end{proof}

\begin{theorem} \label{thm:countable}
If $A \in \D$ is countable then ${\isom_A} \sim_B {\isom^\star_A}$.
\end{theorem}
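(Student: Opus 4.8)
The plan is to obtain this immediately from Theorem~\ref{thm:isomA}, combined with Proposition~\ref{prop:countable}, by splitting into cases according to whether or not $A$ is dense in a right neighborhood of $0$. As always, the inclusion $\V^\star_A \subseteq \V_A$ gives ${\isom^\star_A} \leq_B {\isom_A}$ for free, so the only content is the reverse reduction ${\isom_A} \leq_B {\isom^\star_A}$, and for this it is enough to check that a countable $A \in \D$ always satisfies at least one of the three conditions (i)--(iii) of Theorem~\ref{thm:isomA}.

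First, if $A$ is not dense in any right neighborhood of $0$, then $A$ satisfies condition (i) of Theorem~\ref{thm:isomA} by fiat, and we are done.

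Second, suppose $A$ is dense in some interval $(0,\varepsilon)$ with $\varepsilon > 0$. Then, choosing for instance $a = \varepsilon/3$ and $b = 2\varepsilon/3$, we have $0 < a < b$ and $A$ is dense in $(a,b)$ (density in $(0,\varepsilon)$ passes to the subinterval); moreover $A \cap (a,+\infty)$ is countable simply because $A$ is countable. Thus the hypotheses of Proposition~\ref{prop:countable} are met, and it produces an injective, non-surjective, Polish metric preserving map $f \colon A \to A$, i.e.\ $A$ satisfies condition (iii) of Theorem~\ref{thm:isomA}. In either case Theorem~\ref{thm:isomA} yields ${\isom_A} \sim_B {\isom^\star_A}$.

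There is essentially no obstacle in this argument: the genuine work has already been done in Theorem~\ref{thm:isomA} (reducing $\isom_A$ to $\isom^\star_A$ along a proper, injective, Polish metric preserving self-map of $A$) and in Proposition~\ref{prop:countable} (constructing such a self-map by the slope argument). The only points requiring a moment's care are that the above case distinction is exhaustive for countable $A$ and that the chosen $a,b$ genuinely verify the hypotheses of Proposition~\ref{prop:countable} — in particular that we may take $a,b$ strictly inside the interval of density and that countability of $A$ is exactly what bounds $\lvert A \cap (a,+\infty)\rvert$.
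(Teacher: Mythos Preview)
Your proof is correct and follows exactly the paper's own argument: split into the two cases ``not dense in any right neighborhood of $0$'' (condition~(i) of Theorem~\ref{thm:isomA}) versus ``dense in some $(0,\varepsilon)$'' (apply Proposition~\ref{prop:countable} to land in condition~(iii)). The only difference is that you spell out an explicit choice of $a,b$ inside the interval of density, which the paper leaves implicit.
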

\begin{proof}
If $A$  is not dense in any right neighborhood of $0$ we are in case (i) of
Theorem \ref{thm:isomA}. Otherwise by Proposition \ref{prop:countable} we are
in case (iii) of Theorem \ref{thm:isomA}.
\end{proof}

\subsection{Isometric embeddability}
Recall again that if $A$ is well-founded and well-spaced, then $ \V_A^\star=
\U_A^\star$ and the order type of $A$ is $\leq \omega$. Hence in this case
the structure of the relations $\sqsubseteq_A^\star$ is described by (i) and
(iv) of Theorem \ref{lemmaequivalence}(4). The remaining case is settled by
the following proposition.

\begin{proposition}\label{prop:completeqo}
Let $A\in \D$. If $A$ is either ill-founded or not well-spaced, then
$\sqsubseteq_A^\star$ is Borel bireducible with a complete analytic
quasi-order.
\end{proposition}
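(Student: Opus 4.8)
The plan is to prove the two required reductions separately, since "Borel bireducible with a complete analytic quasi-order" means both that $\sqsubseteq_A^\star$ is analytic (which is automatic, as $\sqsubseteq$ is analytic on $F(\mathbb U)$, hence its restriction to the Borel subspace $\V_A^\star$ is analytic) and that every analytic quasi-order Borel reduces to it.

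First I would handle the ill-founded case. By Theorem~\ref{isomuniversalstar}(2), if $(r_n)_{n\in\omega}$ is a strictly decreasing sequence in $A$ converging to $0$, then isometric embeddability on $\U^\star_{(r_n)_{n\in\omega}}$ is complete for analytic quasi-orders. Since $(r_n)_{n\in\omega} \subseteq A$ and $(r_n)_{n\in\omega}$ is a distance set in $\D$ (it is countable with $0$ as a limit point), if $(r_n)_{n\in\omega} \subsetneq A$ we apply Proposition~\ref{questionclemens} to get ${\sqsubseteq_{(r_n)_{n\in\omega}}^\star} \leq_B {\sqsubseteq^\star_A}$; and if $(r_n)_{n\in\omega} = A$ there is nothing more to do. Either way a complete analytic quasi-order reduces to $\sqsubseteq_A^\star$, and the reverse reduction is the trivial analyticity observation above.

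Next I would treat the case where $A$ is well-founded but not well-spaced. Then there exist $r,r' \in A$ with $r < r' \leq 2r$, and by Lemma~\ref{graphs} the map $G \mapsto X_G$ Borel reduces countable graph embeddability (restricted to nontrivial graphs) to $\sqsubseteq^\star_{\{0,r,r'\}}$. Countable graph embeddability is itself complete for analytic quasi-orders by \cite{louros}, and nontrivial graphs suffice for this completeness (one may, e.g., uniformly add an isolated edge-free vertex and a universal vertex, or simply observe that the standard completeness proof already lands among nontrivial graphs). Since $\{0,r,r'\} \in \D$, if $\{0,r,r'\} \subsetneq A$ we again invoke Proposition~\ref{questionclemens} to obtain ${\sqsubseteq^\star_{\{0,r,r'\}}} \leq_B {\sqsubseteq^\star_A}$; if $\{0,r,r'\} = A$ we are already done. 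This establishes that a complete analytic quasi-order reduces to $\sqsubseteq^\star_A$, completing the proof, since the two cases (ill-founded, or well-founded but not well-spaced) exhaust the hypothesis.

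The main obstacle, such as it is, is bookkeeping rather than mathematics: one must make sure the auxiliary distance sets $(r_n)_{n\in\omega}$ and $\{0,r,r'\}$ genuinely lie in $\D$ so that Proposition~\ref{questionclemens} applies, and one must be slightly careful in the not-well-spaced case that the restriction to nontrivial graphs does not cost completeness — but since $\sqsubseteq^\star_{\{0,r,r'\}}$ is itself analytic, hardness for analytic quasi-orders is all that is needed and this follows from Lemma~\ref{graphs} together with the completeness of countable graph embeddability. No genuinely new construction is required; the proposition is essentially a corollary of Theorem~\ref{isomuniversalstar}(2), Lemma~\ref{graphs}, and Proposition~\ref{questionclemens}.
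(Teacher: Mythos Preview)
Your overall strategy matches the paper's proof (split into the ill-founded and not-well-spaced cases, invoke Theorem~\ref{isomuniversalstar}(2) and Lemma~\ref{graphs} respectively, then push up to $A$ via Proposition~\ref{questionclemens}), but there is one genuine gap in your upper-bound direction.

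You assert that $\V_A^\star$ is a ``Borel subspace'' of $F(\mathbb U)$ and then argue that analyticity of $\sqsubseteq_A^\star$ automatically yields $\sqsubseteq_A^\star \leq_B Q$ for $Q$ a complete analytic quasi-order. But Section~\ref{complexity} shows that $\V_A^\star$ is in general \emph{not} Borel --- it may be $\SI11$-complete, $\PI11$-complete, $D_2(\SI11)$-complete, or even $\PI12$-complete (Theorems~\ref{vastarborel} and~\ref{complcount}). Completeness for analytic quasi-orders is only defined relative to quasi-orders on \emph{standard} Borel spaces, so ``$\sqsubseteq_A^\star$ is analytic'' does not by itself give the reduction you need. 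The paper makes exactly this point in the first paragraph of its proof: even when $\V_A^\star$ is not standard Borel, the inclusion $\V_A^\star \hookrightarrow F(\mathbb U)$ is Borel and reduces $\sqsubseteq_A^\star$ to isometric embeddability on all of $F(\mathbb U)$, which \emph{is} a complete analytic quasi-order on a standard Borel space. That is the correct argument for the upper bound.

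Two minor points. In the ill-founded case you assume the decreasing sequence converges to $0$; this is not guaranteed (e.g.\ $A=\{0\}\cup\{1+1/n:n\geq 1\}$) and is not needed: any strictly decreasing sequence $(r_n)$ gives a countable ill-founded $A'=\{0\}\cup\{r_n\}\in\D$, and Theorem~\ref{isomuniversalstar}(2) applies. Also, be careful to include $0$ explicitly in your auxiliary distance set; the bare sequence $(r_n)_{n\in\omega}$ is not in $\D$ unless you adjoin $0$.
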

\begin{proof}
First notice that, even if $\V_A^\star$ may not be a standard Borel space,
the relation $\sqsubseteq_A^\star$ still Borel reduces to isometric
embeddability on all Polish metric spaces, which is a complete analytic
quasi-order on a standard Borel space (in fact, Louveau and Rosendal
\cite{louros} showed that isometric embeddability restricted to ultrametric
Polish spaces is a complete analytic quasi-order, and we strengthened this in
\cite{cammarmot} by showing that it has the stronger property of being
invariantly universal).

If $A$ is ill-founded, let $( r_n)_{n\in\omega }$ be a decreasing sequence in
$A$, and let \( A' = \{ 0 \} \cup \{ r_n \mid n \in \omega \} \). Then
isometric embeddability on $\U^\star_{A'}$ is a complete analytic quasi-order
by Theorem \ref{isomuniversalstar}(2). Hence so is $\sqsubseteq^\star_{A'}$
because $\U^\star_{A'} \subseteq \V^\star_{A'}$. By applying
Proposition~\ref{questionclemens} in case \( A' \subsetneq A \), we get the
desired result.

Suppose now that there are $r,r'\in A$ with $r<r'\leq 2r$. Then Lemma
\ref{graphs} yields a Borel reduction of embeddability between nontrivial
graphs on $\omega $ to $\sqsubseteq_{\{ 0,r,r'\} }^\star$. Now apply
Proposition~\ref{questionclemens} again in case \( \{0,r,r'\} \subsetneq A
\).
\end{proof}

Summing up, we have the following full description of the relations \(
\sqsubseteq^\star_A \) for \( A \in \D \).

\begin{theorem}\label{sqsubseteqstarA}
Let $A \in \D$.
\begin{enumerate}
\item The relations \( \sqsubseteq^\star_A \) with \( A \) well-founded and
    well-spaced form a strictly increasing chain of order type \( \omega +
    1 \) consisting of Borel quasi-orders, i.e.\ the quasi-orders \(
    \sqsubseteq^\star_\alpha \) for \( \alpha \leq \omega \) from (i) and
    (iv) of Theorem \ref{lemmaequivalence}(4). These relations are Borel
    reducible to all remaining \( \sqsubseteq^\star_{A'} \) for $A'\in \mathcal D $.
\item The relations \( \sqsubseteq^\star_A \) when \( A \) is either
    ill-founded or not well-spaced are Borel bireducible with a complete
    analytic quasi-order.
\end{enumerate}
\end{theorem}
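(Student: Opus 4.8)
The plan is to obtain both clauses by assembling results already established, after observing that the two cases are exhaustive: every $A \in \mathcal{D}$ is either well-founded and well-spaced, or else ill-founded or not well-spaced. With this in hand, clause~(2) needs no further argument, being precisely Proposition~\ref{prop:completeqo}.

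For clause~(1), I would fix $A \in \mathcal{D}$ well-founded and well-spaced and first recall (as noted after Theorem~\ref{thmclemens}) that such an $A$ has order type $\alpha$ with $1 \leq \alpha \leq \omega$. Since $A$ is well-spaced, Theorem~\ref{proponlyultrametric}(2) gives $\V^\star_A = \U^\star_A$, so $\sqsubseteq^\star_A$ coincides with the relation $\sqsubseteq^\star_\alpha$ of \cite{ultrametric}, which (as remarked before Theorem~\ref{lemmaequivalence}) depends up to $\sim_B$ only on $\alpha$. Then Theorem~\ref{lemmaequivalence}(3) yields $\sqsubseteq^\star_\alpha \sim_B \sqsubseteq_\alpha$, while Theorem~\ref{lemmaequivalence}(4)(i) says $\sqsubseteq_\alpha$ is Borel for $\alpha \leq \omega$; hence every $\sqsubseteq^\star_\alpha$ with $1 \leq \alpha \leq \omega$ is a Borel quasi-order. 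That the family $\{\sqsubseteq^\star_\alpha \mid 1 \leq \alpha \leq \omega\}$ forms a $\leq_B$-strictly increasing chain --- of order type $\omega+1$, its index set being order-isomorphic to $\omega+1$ --- follows from Theorem~\ref{lemmaequivalence}(4)(iv), which gives $\sqsubseteq_\alpha <_B \sqsubseteq_\beta$ for all $\alpha < \beta \leq \omega+2$, transported back to the starred relations through Theorem~\ref{lemmaequivalence}(3).

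The last thing to verify for clause~(1) is that each such $\sqsubseteq^\star_\alpha$ Borel reduces to every remaining $\sqsubseteq^\star_{A'}$, i.e.\ to those with $A'$ ill-founded or not well-spaced. Here $\sqsubseteq^\star_\alpha$ is a Borel, hence analytic, quasi-order on the standard Borel space $\U^\star_A$, whereas by Proposition~\ref{prop:completeqo} the relation $\sqsubseteq^\star_{A'}$ is Borel bireducible with a complete analytic quasi-order; composing the two reductions gives $\sqsubseteq^\star_\alpha \leq_B \sqsubseteq^\star_{A'}$, as wanted.

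I do not expect a genuine obstacle: the substantive content sits in \cite{ultrametric} and in Proposition~\ref{prop:completeqo}, and what remains is the bookkeeping above. The two points meriting a little care are the passage from $\sqsubseteq_\alpha$ to $\sqsubseteq^\star_\alpha$ via Theorem~\ref{lemmaequivalence}(3) (needed both for Borelness and for the strictness of the chain) and the check that the stated case division is exhaustive.
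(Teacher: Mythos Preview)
Your proposal is correct and follows essentially the same route as the paper, which presents this theorem as a summary (with no explicit proof) of the paragraph preceding Proposition~\ref{prop:completeqo} together with Proposition~\ref{prop:completeqo} itself. You make explicit the passage through Theorem~\ref{lemmaequivalence}(3) needed to transfer parts (i) and (iv) of Theorem~\ref{lemmaequivalence}(4) from $\sqsubseteq_\alpha$ to $\sqsubseteq^\star_\alpha$, which the paper leaves implicit.
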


The following is the analogue of Theorem \ref{thm:isomA}, but in this case
the result is unconditional.

\begin{corollary}\label{corsqsubseteqA}
If \( A \in \D \) then ${\sqsubseteq_A} \sim_B {\sqsubseteq^\star_A}$.
\end{corollary}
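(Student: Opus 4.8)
The plan is to split into the same two cases that structure Theorem~\ref{sqsubseteqstarA}, since one direction of the bireducibility is free. Concretely, for any $A \in \D$ we have $\V^\star_A \subseteq \V_A$, so ${\sqsubseteq^\star_A} \leq_B {\sqsubseteq_A}$ holds trivially; the content is the reduction ${\sqsubseteq_A} \leq_B {\sqsubseteq^\star_A}$.

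First I would dispose of the case in which $A$ is well-founded and well-spaced. Here Theorem~\ref{proponlyultrametric}(2) gives $\V_A = \U_A$ and $\V^\star_A = \U^\star_A$, so $\sqsubseteq_A$ and $\sqsubseteq^\star_A$ coincide with $\sqsubseteq_\alpha$ and $\sqsubseteq^\star_\alpha$ where $\alpha \leq \omega$ is the order type of $A$; then Theorem~\ref{lemmaequivalence}(3) yields ${\sqsubseteq_\alpha} \sim_B {\sqsubseteq^\star_\alpha}$ directly, and in particular ${\sqsubseteq_A} \leq_B {\sqsubseteq^\star_A}$.

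The remaining case is when $A$ is ill-founded or not well-spaced. Here the key observation is that $\sqsubseteq_A$ is an analytic quasi-order: it Borel reduces to isometric embeddability on all Polish metric spaces (via the inclusion $\V_A \hookrightarrow F(\mathbb U)$, or through the coding of Remark~\ref{rem:coding}), which is an analytic quasi-order on a standard Borel space. On the other hand, by Proposition~\ref{prop:completeqo}, $\sqsubseteq^\star_A$ is Borel bireducible with a \emph{complete} analytic quasi-order. Therefore every analytic quasi-order, and in particular $\sqsubseteq_A$, Borel reduces to $\sqsubseteq^\star_A$. Combining with the trivial converse gives ${\sqsubseteq_A} \sim_B {\sqsubseteq^\star_A}$.

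I do not expect any genuine obstacle: the corollary is essentially a bookkeeping consequence of Theorem~\ref{sqsubseteqstarA} together with Proposition~\ref{prop:completeqo}. The only point requiring a word of care is making explicit that $\sqsubseteq_A$ itself (not merely $\sqsubseteq^\star_A$) is analytic, so that completeness of $\sqsubseteq^\star_A$ among analytic quasi-orders applies to it; this is immediate since $\V_A$ sits inside the standard Borel space $F(\mathbb U)$ with $\sqsubseteq$ analytic there. All cases are exhausted because, as noted after Theorem~\ref{thmclemens}, an $A \in \D$ which is both well-founded and well-spaced is precisely the complement of the ``ill-founded or not well-spaced'' case.
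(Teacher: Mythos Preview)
Your proof is correct and follows essentially the same approach as the paper: the trivial direction from $\V^\star_A \subseteq \V_A$, then the split into the well-founded well-spaced case (handled via Theorem~\ref{proponlyultrametric}(2) and Theorem~\ref{lemmaequivalence}(3)) and the remaining case (handled by reducing $\sqsubseteq_A$ to $\sqsubseteq$ on $F(\mathbb U)$ and invoking the completeness from Proposition~\ref{prop:completeqo}). Your explicit remark about why completeness applies even though $\V_A$ need not be standard Borel is exactly the point the paper leaves implicit.
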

\begin{proof}
For any $A$ we have ${\sqsubseteq^\star_A} \leq_B {\sqsubseteq_A}$ because
$\V^\star_A \subseteq \V_A$.

If we are in case (1) of the previous Theorem, ${\sqsubseteq_A} \leq_B
{\sqsubseteq^\star_A}$ follows from Theorem \ref{lemmaequivalence}(3) because
in this case $\V_A = \U_A$ and $\V^\star_A = \U^\star_A$ by Theorem
\ref{proponlyultrametric}(2). If we are in case (2), notice that
${\sqsubseteq_A}$ is Borel reducible to isometric embeddability on arbitrary
Polish spaces. The latter, being an analytic quasi-order on a standard Borel
space, is Borel reducible to any complete analytic quasi-order and hence to
${\sqsubseteq^\star_A}$.
\end{proof}

We observe that Theorem \ref{sqsubseteqstarA} holds even if we further
restrict the relation $\sqsubseteq^\star_A$ to zero-dimensional spaces. In
fact in case (1) all elements of $\V^\star_A$ are discrete and hence
zero-dimensional, while in case (2), which is based on
Proposition~\ref{prop:completeqo}, we use Remark \ref{remark}. For any other
topological dimension we have that $\V^\star_A$ contains such spaces if and
only if $A$ includes a right neighborhood of $0$ (by minor modifications of
the proof of Theorem \ref{proponlyultrametric}(1)). Theorem
\ref{sqsubseteqstarA} holds also in this case by Remark \ref{remark}, but
only case (2) can occur. Similar observations apply to Corollary
\ref{corsqsubseteqA}.

The analogous problems about isometry between spaces of fixed dimension
(different from $\infty$) appear to be more delicate and are discussed more
in detail in \cite[Question 7.1 and the ensuing discussion]{ultrametric}.
The only result about isometry that we know still holds after fixing
dimension is Theorem \ref{thm:isomA}, because we can still use Remark
\ref{remark}.

Proposition \ref{prop:completeqo} can be strengthened by replacing
completeness with invariant universality (the notion originates in
\cite{frimot}, and was formally introduced in \cite{cammarmot}).

\begin{defin}\label{def:invariantlyuniversal}
Let the pair \( (S,E) \) consist of an analytic quasi-order \( S \) and an
analytic equivalence relation \( E \subseteq S \), with both relations
defined on the same standard Borel space $X$. Then $(S,E)$ is
\emph{invariantly universal} (for analytic quasi-orders) if for any analytic
quasi-order $R$ there is a Borel $B\subseteq X$ invariant under $E$ such that
$R\sim_BS\restriction B$.

When $E$ is isometry and $S$ is isometric embeddability on some class of
metric spaces, we just say that $S$ is invariantly universal.
\end{defin}

Notice that if $(S,E)$ is invariantly universal, then $S$ is complete for
analytic quasi-orders.

A notion strictly connected to invariant universality is the following (see
\cite{ultrametric}). Given a pair \( (S,E) \) as above, we denote by \( S/E
\) the \( E \)-quotient of \( S \), i.e.\ the quasi-order on $X/E$ induced by
$S$. If $F$ and $E$ are equivalence relations on sets $X$ and $Y$ and $f
\colon X/F\to Y/E$, then a \emph{lifting} of $f$ is a function $ \hat f
\colon X\to Y$ such that $[ \hat f (x)]_E=f([x]_F)$ for every $x\in X$.

\begin{defin}\label{def:cB}
Let $(R,F)$ and $(S,E)$ be pairs consisting of a quasi-order and an
equivalence relation on some standard Borel spaces, with $F \subseteq R$ and
$E \subseteq S$.

We say that $(R,F)$ is \emph{classwise Borel isomorphic} to $(S,E)$, in
symbols $(R,F) \simeq_{cB} (S,E)$, if there is an isomorphism of quasi-orders
\( f \) between \( R/F \) and \( S/E \) such that both \( f \) and \( f^{-1}
\) admit Borel liftings.


When the equivalence relations $F$ and $E$ are clear from the context we just
say that $R$ is classwise Borel isomorphic to
$S$, and
write \( R \simeq_{cB} S \).
\end{defin}

It is easy to see that if $(R,F)$ is invariantly universal and for some Borel
$E$-invariant $B$ we have $(R,F) \simeq_{cB} (S \restriction B, E
\restriction B)$ then $(S,E)$ is invariantly universal as well.

\begin{lemma} \label{lemma:invuniv1}
Let \( A = \{ 0,r,r' \} \) with \( r < r' \leq 2r \). Then \( \V^\star_A \)
is Borel in \( F(\mathbb{U}) \) and \( \sqsubseteq^\star_A \) is invariantly
universal.
\end{lemma}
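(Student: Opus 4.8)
The first assertion, that $\V^\star_A$ is Borel for $A=\{0,r,r'\}$, is immediate from Theorem~\ref{vastarborel}(1) (equivalently Theorem~\ref{complcount}(1)): here $A$ is finite, so $0$ is isolated in $A$. For the substantive claim we must produce, for an arbitrary analytic quasi-order $R$, a Borel set $B\subseteq F(\mathbb U)$ that is invariant under $\isom$, is contained in $\V^\star_A$, and satisfies $R\sim_B {\sqsubseteq^\star_A}\restriction B$. The strategy is to exploit the folklore coding $G\mapsto X_G$ of Lemma~\ref{graphs}, which Borel-reduces countable graph embeddability (restricted to nontrivial graphs) to $\sqsubseteq^\star_{\{0,r,r'\}}$, and to combine it with the known invariant universality of graph embeddability. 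Concretely, I would invoke the result (from \cite{cammarmot}, or its reformulation for graphs) that embeddability on countable graphs together with graph isomorphism is invariantly universal for analytic quasi-orders, so it suffices to transfer this along $G\mapsto X_G$.

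The heart of the argument is therefore to show that the map $G\mapsto X_G$ witnesses a \emph{classwise Borel isomorphism} between (embeddability, isomorphism) on an appropriate Borel invariant class of graphs and $(\sqsubseteq^\star_A,\isom^\star_A)$ restricted to a corresponding Borel invariant class $B$ of metric spaces. First I would identify $B$: a space $X\in\V^\star_A$ arises (up to isometry) as some $X_G$ precisely when $X$ has the form ``$\omega$-many points pairwise at distance $r$ or $r'$''; since $D(X)=\{0,r,r'\}$ and $r'\le 2r$ forces every triangle to be legal, this is automatic once $X$ is a countable discrete space with these distances. The only subtlety is the finite/countably-infinite dichotomy and the excluded trivial graphs (empty graph, complete graph): I would let $B$ be the isometry-saturation of $\{X_G\mid G \text{ nontrivial}\}$, check it is Borel (it is defined by: $X$ is infinite discrete, $D(X)=A$, and $X$ is neither the equilateral-$r$ space nor the equilateral-$r'$ space), and note $B$ is $\isom$-invariant by construction. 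Then $G\mapsto X_G$ descends to a bijection between isomorphism classes of nontrivial graphs and isometry classes in $B$, it is a quasi-order isomorphism by Lemma~\ref{graphs}, and both it and its inverse have Borel liftings — the forward lifting is $G\mapsto X_G$ itself (Borel via Remark~\ref{rem:coding}), and the backward lifting sends $X\in B$ to the graph on $\omega$ whose edges are the pairs of $\psi_n(X)$'s at distance $r$, which is Borel using the $\psi_n$.

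The main obstacle I anticipate is the bookkeeping around \emph{which} graphs and spaces to include so that the correspondence is an honest \emph{bijection} on quotients and both liftings are Borel: one must handle the countably infinite versus finite distinction (nontrivial finite graphs give finite metric spaces, which still lie in $\V^\star_A$ and must be kept), confirm that two nonisomorphic nontrivial graphs never yield isometric spaces and conversely (this is exactly the ``nontrivial'' hypothesis in Lemma~\ref{graphs}), and verify that the backward map $X\mapsto G$ is well-defined and Borel on all of $B$, not merely on the range of $G\mapsto X_G$ — which is why we must arrange $B$ to be precisely that range up to isometry. Once the classwise Borel isomorphism $(\sqsubseteq^\star_A\restriction B,\isom^\star_A\restriction B)\simeq_{cB}(\text{graph embeddability},\text{graph isomorphism})$ on nontrivial graphs is in hand, the remark immediately preceding the lemma, together with the invariant universality of graph embeddability, yields that $\sqsubseteq^\star_A$ is invariantly universal, completing the proof.
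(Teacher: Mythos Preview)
Your approach is essentially the same as the paper's: Borelness via Theorem~\ref{vastarborel}, then a classwise Borel isomorphism between graph embeddability and $\sqsubseteq^\star_A$ on a Borel $\isom$-invariant $B$ via $G\mapsto X_G$, with the inverse lifting $X\mapsto G_X$ given by putting an edge between $n$ and $m$ when $d_X(\psi_n(X),\psi_m(X))=r$, and then transferring invariant universality from \cite{frimot}. One small point: the graphs in Lemma~\ref{graphs} are already graphs on $\omega$, so there are no finite graphs in play and your worry that finite spaces ``must be kept'' is misplaced; the paper simply takes $B$ to be the set of \emph{infinite} spaces in $\V^\star_A$ (a Borel condition via the $\psi_n$'s), which coincides with the $\isom$-saturation of $\{X_G\mid G\text{ nontrivial on }\omega\}$, and your redundant exclusion of equilateral spaces is unnecessary once $D(X)=A$.
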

\begin{proof}
The set $\V^\star_A$ is Borel by Theorem \ref{vastarborel}. We now show that
embeddability between nontrivial countably infinite graphs is classwise Borel
isomorphic to the restriction of $\sqsubseteq^\star_A$ to the class of
infinite spaces. This suffices because embeddability between these graphs is
invariantly universal by \cite{frimot}. First notice that the class of
infinite spaces in $\V^\star_A$ is Borel because is the collection of \( X
\in \V^\star_A \) satisfying
\[
\forall n \in \omega \, \exists m \in \omega \, \forall i \leq n (\psi_m(X) \neq \psi_i(X)).
 \]
The classwise Borel isomorphism between the quotient quasi-orders is the
quotient of the map introduced before Lemma \ref{graphs}. The inverse of this
map is induced by the Borel  function $X\mapsto G_X$ where $(n,m)$ is an edge
in $G_X$ if and only if $d_X(\psi_n(X),\psi_m(X))=r$.
\end{proof}

\begin{remark} \label{rmk:invuniv1}
The proof of \cite[Theorem 3.9]{frimot} actually shows that the embeddability
relation is already invariantly universal when restricted to the (Borel)
class of connected graphs on \(\omega\): this ensures that the restriction of
\( \sqsubseteq^\star_A \) (for $A$ as in the hypothesis of Lemma
\ref{lemma:invuniv1}) to spaces in which every point realizes the distance \(
r \) is still invariantly universal.
\end{remark}

\begin{lemma} \label{lemma:invuniv2}
Let \( A = \{ 0 \} \cup \{ r_n \mid n \in \omega \} \) be well-spaced with \(
(r_n)_{n \in \omega} \) a strictly decreasing sequence converging to \( 0 \).
Then \( \V^\star_A \) is Borel in \( F(\mathbb{U}) \) and \(
\sqsubseteq^\star_A \) is invariantly universal.
\end{lemma}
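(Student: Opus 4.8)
The plan is to follow the template of Lemma~\ref{lemma:invuniv1}: first dispose of the Borelness of $\V^\star_A$, and then exhibit a classwise Borel isomorphism between $\sqsubseteq^\star_A$ and an invariantly universal quasi-order of combinatorial origin. For the first point, since $(r_n)_{n\in\omega}$ is strictly decreasing to $0$ the set $A$ has $0$ as its \emph{unique} limit point, so $\V^\star_A$ is Borel by Theorem~\ref{vastarborel}(1); in particular $(\sqsubseteq^\star_A,\cong^\star_A)$ is a pair of analytic relations on a standard Borel space, so that invariant universality is meaningful for it.

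The heart of the argument is a ball-tree dictionary. Since $A$ is well-spaced, $\V^\star_A=\U^\star_A$ by Theorem~\ref{proponlyultrametric}(2), so every $X\in\V^\star_A$ is a Polish ultrametric space. For each $n$ the relation $x\sim_n y\iff d_X(x,y)\le r_n$ is an equivalence relation, $\sim_{n+1}$ refines $\sim_n$, $\bigcap_n{\sim_n}$ is equality, and a fixed $\sim_n$-class meets at most countably many $\sim_{n+1}$-classes (points in distinct ones are at distance $>r_{n+1}$, so separability bounds them). Hence the $\sim_n$-classes, ordered by reverse inclusion and graded by $n$, form a pruned, countably branching tree $T_X$ rooted at $X$, and $X$ is isometric to the space of branches of $T_X$ with the metric assigning to a pair of distinct branches the value $r_n$ where $n$ is the level at which they split. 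Using the maps $\psi_m$ one realizes $X\mapsto T_X$ as a Borel map into the standard Borel space of pruned subtrees of $\omega^{<\omega}$, with range the Borel, $\cong$-invariant set $\mathcal R$ of those trees in which every $r_n$ is realized (for each $n$, some level-$n$ node has at least two immediate successors); the inverse $T\mapsto X_T$ (the branch space of $T$) is Borel on $\mathcal R$. A routine isometry argument shows $X\cong Y\iff T_X\cong T_Y$ and $X\sqsubseteq Y\iff T_X\preceq T_Y$, where $T\preceq T'$ means there is a level-preserving order-embedding of $T$ into $T'$ (an isometric embedding sends $r_n$-balls to $r_n$-balls and reflects this, and conversely a level-preserving order-embedding extends to branches). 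Thus $(\sqsubseteq^\star_A,\cong^\star_A)\simeq_{cB}(\preceq\restriction\mathcal R,\cong\restriction\mathcal R)$.

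It remains to see that $\preceq\restriction\mathcal R$ is invariantly universal. By Theorem~\ref{isomuniversalstar}(2), read through the dictionary above, level-preserving embeddability between pruned $\omega^{<\omega}$-trees is already complete for analytic quasi-orders; the task is to sharpen this to invariant universality. I would do this by reducing classwise-Borel-isomorphically from a known invariantly universal combinatorial quasi-order, namely embeddability between countable (rooted) combinatorial trees, invariantly universal by \cite{frimot}, via a coding of such a tree into a member of $\mathcal R$ analogous to the reduction behind Theorem~\ref{isomuniversalstar}(2) in \cite{ultrametric}: each combinatorial vertex at depth $d$ is placed at a fixed level depending on $d$, each vertex is decorated with sterile ``marker'' rays recording how many children it has (so that leaves are distinguishable from valence-one vertices), and a fixed everywhere-branching ``spine'' is grafted at the root so that every $r_n$ is realized. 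Then the transfer fact stated just before Lemma~\ref{lemma:invuniv1}, applied first along this reduction and then along the dictionary, yields that $\sqsubseteq^\star_A$ is invariantly universal.

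The only substantial point, and the expected main obstacle, is precisely the last one: arranging the coding so that the composite map is a genuine classwise Borel isomorphism onto a Borel $\cong$-invariant subclass of $\mathcal R$ — that is, so that it is injective on isomorphism types and its inverse admits a Borel lifting. This forces one to make several design choices carefully (a canonical way of rooting the combinatorial tree, the marker gadgets, the spine, and the level schedule) and then to verify the attendant level-and-branching bookkeeping. Everything else — the Borel-measurability of the maps involved and the verification of the ball-tree dictionary — is routine, in the spirit of Lemma~\ref{lemma:invuniv1} and the constructions of Section~\ref{term}.
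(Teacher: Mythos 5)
Your treatment of the first assertion is fine, and in fact takes a slightly different route from the paper: you get Borelness of \( \V^\star_A \) from Theorem~\ref{vastarborel}(1) (here \(0\) is the unique limit point of \(A\)), whereas the paper observes that well-spacedness gives \( \V^\star_A = \U^\star_A \) via Theorem~\ref{proponlyultrametric}(2), and the latter is standard Borel. Either argument is acceptable. The ball-tree dictionary and the transfer principle (invariant universality passes through a classwise Borel isomorphism onto a Borel isometry-invariant class) are also sound in principle, and completeness of \( \sqsubseteq^\star_A \) does indeed follow from Theorem~\ref{isomuniversalstar}(2) since this \(A\) is ill-founded.

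The genuine gap is the step you yourself flag as ``the only substantial point'': upgrading completeness to invariant universality. That upgrade is exactly the mathematical content of the lemma, and your proposal does not carry it out — the marker-ray/spine coding of countable combinatorial trees into members of \( \mathcal R \), the proof that it is injective on isomorphism types, that its range is Borel and isometry-invariant, and that the inverse admits a Borel lifting are all left as design intentions rather than arguments. This is not routine bookkeeping: it is the substance of \cite[Theorem 5.19]{cammarmot}, which the paper's proof simply invokes for the special case \( r_n = 2^{-n} \), noting that the same proof works verbatim for an arbitrary strictly decreasing well-spaced sequence converging to \(0\). So as written your argument either needs that citation (making the whole reconstruction unnecessary) or needs the full construction and verification you postponed; without one of the two, the invariant universality claim is not established.
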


\begin{proof}
The fact that under the hypotheses of the lemma the set \( \V^\star_A \) is Borel follows from
Theorem~\ref{proponlyultrametric}(2): since \( A \) is well-spaced, then \(
\V^\star_A = \U^\star_A \), and the latter is a Borel subset of \(
F(\mathbb{U}) \). The fact that \( \sqsubseteq^\star_A \) is invariantly
universal is proved in~\cite[Theorem 5.19]{cammarmot} for the special case \(
r_n = 2^{-n} \), but notice that the same proof works with any other choice
for the \( r_n \)'s as well.
\end{proof}

\begin{remark}\label{rmk:decreasing}
The proof of~\cite[Theorem 5.19]{cammarmot} actually shows that \(
\sqsubseteq^\star_A \) (for $A$ as in the hypothesis of Lemma
\ref{lemma:invuniv2}) is already invariantly universal when restricted to
spaces \( X \in \V^\star_A \) such that for all \( n \in \omega \) there is
\( m \in \omega \) with \( d_X(\psi_n(X),\psi_m(X)) = r_0 \).
\end{remark}

\begin{theorem}\label{thm:invuniv}
Let \( A \in \D \), and assume that \( A \) is either ill-founded or not
well-spaced. Then \( \sqsubseteq^\star_A \) is invariantly universal, meaning that:%
\footnote{Formally, the definition of invariant universality is given only
for quasi-orders with a standard Borel domain (Definition~\ref{def:cB}). Here
we are considering the natural generalization of this concept to quasi-orders
whose domain is an arbitrary Borel space.} For every analytic quasi-order \(
R \) on a standard Borel space there is a Borel subset \( C' \) of \(
F(\mathbb{U}) \) invariant under isometry such that $C' \subseteq \V^\star_A$
and \( R \sim_B {\sqsubseteq \restriction C'} \).
\end{theorem}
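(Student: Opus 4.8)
The plan is to reduce to the two base cases established in Lemmas~\ref{lemma:invuniv1} and~\ref{lemma:invuniv2}, transporting a witness of invariant universality along the reductions built in Proposition~\ref{questionclemens}. If $A$ is not well-spaced, fix $r<r'\leq 2r$ in $A$ and set $A'=\{0,r,r'\}$; if $A$ is ill-founded, fix a strictly decreasing sequence in $A$ converging to $0$ and let $A'$ consist of $0$ together with its range. In both cases $A'$ is countable, so $A'\in\D$, and $A'\subseteq A$; if $A'=A$ we are done by the relevant lemma, so assume $A'\subsetneq A$. By Lemma~\ref{lemma:invuniv1} together with Remark~\ref{rmk:invuniv1}, respectively Lemma~\ref{lemma:invuniv2} together with Remark~\ref{rmk:decreasing}, the quasi-order $\sqsubseteq^\star_{A'}$ is invariantly universal already when restricted to the Borel, isometry-invariant class $\mathcal Z_0$ of those $X\in\V^\star_{A'}$ in which every point realizes a fixed distance $\rho\in A'$ (namely $\rho=r$, resp.\ $\rho=r_0$).

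Now fix an analytic quasi-order $R$ on a standard Borel space. By the previous paragraph there is a Borel isometry-invariant $C\subseteq\mathcal Z_0$ with $R\sim_B{\sqsubseteq\restriction C}$. Let $g$ be the Borel function from Proposition~\ref{questionclemens} witnessing ${\sqsubseteq_{A'}}\leq_B{\sqsubseteq^\star_A}$: it has the form $X\mapsto X\oplus_{s_0}Z$ or $X\mapsto X\times Z$ for a fixed auxiliary space $Z$ (with $s_0\in A\setminus A'$), it also reduces $\isom_{A'}$ to $\isom^\star_A$, and it reflects $\sqsubseteq$. Since inside $g(X)$ the distance $s_0$ is realized only by the ``crossing'' pairs, the partition, resp.\ the product decomposition, underlying the construction can be recognised in a Borel way; moreover, if $Z$ is chosen so as to contain a point not realizing $\rho$ (which is possible because $A'$, resp.\ $A\setminus\{s_0\}$, still contains $\rho$ together with further distances), then the piece isometric to $X$ is exactly the one all of whose points realize $\rho$. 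This lets one recover from $g(X)$, by a Borel isometry-respecting map, a copy of $X$ up to isometry. Letting $C'\subseteq\V^\star_A$ be the isometry-saturation of $g(C)$, the map $g\restriction C$ and the recovery map are mutually inverse, up to isometry, Borel reductions between ${\sqsubseteq\restriction C}$ and ${\sqsubseteq\restriction C'}$; hence $R\sim_B{\sqsubseteq\restriction C}\sim_B{\sqsubseteq\restriction C'}$, and $C'$ is the desired set. (Equivalently, $g$ and the recovery map witness a classwise Borel isomorphism, so invariant universality transfers by the observation preceding the statement.)

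The main obstacle is to verify that $C'$ is genuinely \emph{Borel}, since a priori the isometry-saturation of a Borel set is only analytic. This is exactly where the freedom in choosing $Z$ is used: membership of $W$ in $C'$ is to be rewritten as the Borel conjunction ``$W$ has the recognizable crossing/product structure, the $\rho$-realizing piece belongs to $C$, and the complementary piece is isometric to $Z$''. The last clause is Borel provided $Z$ is chosen with a structurally simple isometry type (e.g.\ $Z$ finite when $A$ permits it, and otherwise a suitably rigid countable space whose isometry type is singled out by a Borel property), so that ``isometric to $Z$'' becomes a Borel condition. Once $C'$ is known to be Borel, its isometry-invariance, the inclusion $C'\subseteq\V^\star_A$, and the reduction properties of $g\restriction C$ and of the recovery map are routine to check from the definitions of $\oplus_{s_0}$, of the product metric, and of $g$; the product-construction case requires only slightly more care in pinning down the copy of $X$ inside $X\times Z$.
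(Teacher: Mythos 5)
Your high-level plan is the same as the paper's: establish invariant universality for the two base sets $A'$ via Lemmas~\ref{lemma:invuniv1}--\ref{lemma:invuniv2} and Remarks~\ref{rmk:invuniv1}, \ref{rmk:decreasing}, then push it into $\V^\star_A$ by appending a fixed auxiliary space $Z$, recovering $X$ in a Borel way, and checking that the isometry-saturation $C'$ of the image is Borel. The problem is that the step carrying all the weight --- Borelness of $C'$ together with a Borel, isometry-respecting recovery map defined on $C'$ --- is only asserted, and the vehicle you choose for it is the wrong one. For a generic $A$ (e.g.\ $A$ uncountable, $A'$ countable) the relevant case of Proposition~\ref{questionclemens} is the \emph{product} map $X \mapsto X \times Z$ with the max metric, not $\oplus_{s_0}$ (the $\oplus$ case of that proposition only covers $A = A' \cup \{s_0\}$ with $s_0$ above everything). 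Recognizing, for an abstract $W \in F(\mathbb{U})$, that $W$ is isometric to a max-metric product $X \times Z$ with $X \in C$, and extracting a canonical copy of $X$ (the pairs realizing $s_0$ form a set of the shape ``two sheets over a part of $X$'', not a copy of $X$, and isometries of $X\times Z$ can mix the sheets), is not ``slightly more care'': it is exactly the obstacle the paper's proof is engineered to avoid. The paper does \emph{not} reuse Proposition~\ref{questionclemens}; instead Claims~\ref{claim:invuniv1} and~\ref{claim:invuniv2} build a bespoke gluing $\widetilde X = X \oplus_{\bar r} Z$ with $Z = W \oplus_{\bar r} V$, where $\bar r$ dominates $A'$, every point of $X$ realizes $r_0$ against a dense set, $r_0 \notin D(Z)$, and a marker pair at distance $r_1$ pins down the gluing point; these features are what make the decomposition of an arbitrary $Y$ into $X(Y)$ and $Z(Y)$, the Borel description of $C = [f(B)]_\cong$, and the two-sided Borel maps $f,g$ with $f\circ g \cong \mathrm{id}$, $g\circ f \cong \mathrm{id}$ actually checkable. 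Your proposal contains no substitute for this construction.

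Two smaller points. First, your proposed fix for the clause ``isometric to $Z$'' (choose $Z$ finite or a rigid countable space) is both unavailable and unnecessary: $Z$ must lie in $\V^\star_A$ (or $\V^\star_{A\setminus\{s_0\}}$), so it cannot be finite or countable when $A$ is uncountable; on the other hand, since isometry on $F(\mathbb{U})$ is Borel bireducible with an orbit equivalence relation, the isometry class of \emph{any} fixed $Z$ is Borel, which is the fact the paper invokes. Second, in your ill-founded case you should split as ``not well-spaced'' versus ``well-spaced and ill-founded'': only in the latter case is an arbitrary strictly decreasing sequence automatically convergent to $0$ and well-spaced, as required by Lemma~\ref{lemma:invuniv2}; as phrased, your chosen sequence need not satisfy its hypotheses.
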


\begin{proof}
By Lemma~\ref{lemma:invuniv1} and Lemma~\ref{lemma:invuniv2}, we can assume
that \( A \) is neither of the form \( \{ 0,r,r' \} \) with \( r < r' \leq 2r
\), nor of the form \( \{ 0 \} \cup \{ r_n \mid n \in \omega \} \) with \(
(r_n)_{n \in \omega} \) a strictly decreasing sequence converging to \( 0 \)
such that \( 2r_{n+1} < r_n \).
\begin{claim} \label{claim:invuniv1}
There exist \( A' \subseteq A \), \( B \subseteq \V^\star_{A'} \), \( r_0 \in
A' \), and \(r_1 , \bar{r} \in A \) such that:
\begin{enumerate}
\item \( B \) is a Borel subset of \( F(\mathbb{U}) \) which is invariant
    under isometry;
\item \( \sqsubseteq \restriction B \) is invariantly universal;
\item \(r_0, r_1 < \bar{r}\), \(r_0 \neq r_1\), and \( r \leq \bar{r} \)
    for all \( r \in A' \);
\item for every \( X \in B \) and for every countable dense \( D \subseteq
    X \) it  holds
\[
\forall x \in X \, \exists y \in D \, (d_X(x,y) = r_0).
 \]
\end{enumerate}
\end{claim}

\begin{proof}
Suppose first that \( A \) is not well-spaced, and let \( r,r' \in A \) be
such that \( r < r' \leq 2r \). Set \( A' = \{ 0, r, r' \} \).  Set \( r_0 =
r \), and pick \( \bar{r} \in A \cap (r', + \infty) \) if such set is
nonempty, and \( \bar{r} = r' \) otherwise. Finally, let \( r_1 \) be any
element of \( A \) distinct from \( r_0  \) and smaller than \( \bar{r} \);
indeed, if \( \bar{r} \neq r' \) we can just take \( r_1 = r' \), otherwise
the existence of such an \( r_1 \) is guaranteed by the fact that \( \bar{r}
= r'  = \max A \) and \( A \neq A' \). These choices ensure that (3) is
satisfied.

Since $\V^\star_{A'}$ is Borel by Lemma \ref{lemma:invuniv1} the set \( B
\subseteq F(\mathbb{U}) \) consisting of the $X \in \V^\star_{A'}$ such that
\[
\forall n\, \exists m\, \forall i \leq n (\psi_m(X) \neq \psi_i(X)) \wedge
\forall n\, \exists m (d_X(\psi_n(X),\psi_m(X)) = r)
\]
is Borel. Notice that every \( X \in B \) is discrete and infinite, so that
the \( \psi_n \)'s enumerate the entire \( X \) and no distances outside $A'$
are possible. It easily follows that \( B \) is invariant under isometry so
that (1) is satisfied. Condition (4) is satisfied as well by the second part
of the definition of \( B \) and the fact that we set \( r_0 = r \). Finally,
condition (2) follows from Lemma~\ref{lemma:invuniv1} and
Remark~\ref{rmk:invuniv1}.

Assume now that \( A \) is well-spaced and ill-founded. Notice that any
strictly decreasing sequence \( (r_n)_{n \in \omega} \) in \( A \) must
converge to \( 0 \). Also we may
assume without loss of generality that there is \( \bar{r} \in A \) with \(
r_0 < \bar{r} \) (otherwise we shift the decreasing sequence by one). Then
setting \( A' = \{ 0 \} \cup \{ r_n \mid n \in \omega \} \) we get that (3)
is satisfied. Moreover, \( A' \) is well-spaced by case assumption, hence \(
\V^\star_{A'} \) is Borel by Lemma~\ref{lemma:invuniv2}.

Let \( B \) be the collection of those \( X \in \V^\star_{A'} \) such that
\[
\forall n \exists m (d_X(\psi_n(X),\psi_m(X)) = r_0).
 \]
The set \( B \) is clearly Borel in \( F(\mathbb{U}) \). We will now show
that (4) is satisfied for such \( B \): since condition (4) is preserved by
isometry, it will also follow that \( B \) is invariant under isometry, i.e.\
that (1) is satisfied. So let \( X \in B \), let \( D \) be dense in \( X \),
and let \( x \in X \) be arbitrary. Let \( n \in \omega \) be such that \(
d_X(\psi_n(X),x) < r_0 \), and let \( m \in \omega \) be such that \(
d_X(\psi_n(X),\psi_m(X)) = r_0 \), which exists because \( X \in B \). Then
since \( X \) is ultrametric by Theorem~\ref{proponlyultrametric}(2) and the
fact that \( A' \) is well-spaced, we also get \( d_X(x, \psi_m(X)) = r_0 \).
Using the density of \( D \), pick \( y \in D \) such that \(
d_X(\psi_m(X),y) < r_0 \): using again the fact that \( X \) is ultrametric,
we get \( d_X(x,y) = r_0 \), as required. Finally, part (2) follows from
Lemma~\ref{lemma:invuniv2} and Remark~\ref{rmk:decreasing}.
\end{proof}

\begin{claim} \label{claim:invuniv2}
Let \( A' \), \( B \subseteq \V^\star_{A'} \), \( r_0 \), \(r_1 \), and \(
\bar{r} \) be as in Claim~\ref{claim:invuniv1}. Then there is a Borel map \(
f \colon B \to \V^\star_A \) such that
\begin{enumerate}
\item \( C = [f(B)]_{\cong} = \{ Y \in F(\mathbb{U}) \mid \exists X \in B
    (f(X)  \cong Y) \} \) is Borel (and obviously invariant under
    isometry);
\item \( f \) reduces \( \sqsubseteq \restriction B \) to \(
    \sqsubseteq^\star_A  \) and \( \cong \restriction B \) to \(
    \cong^\star_A \);
\item there is a Borel \( g \colon C \to B  \) such that \( (g \circ f)(X)
    \cong X \) for all \( X \in B \) and \( (f \circ g)(Y) \cong Y \) for
    all \( Y \in C \).
\end{enumerate}
\end{claim}

\begin{proof}
Fix \( W \in \V^\star_{A \setminus \{ r_0,r_1 \bar{r} \}} \), and let \( Z =
W \oplus_{\bar{r}} V \), where \( V \) consists of two points \(z_0\) and \(
z_1 \) at distance \( r_1 \). Notice that by Fact \ref{fact:oplus} \( Z \in
\V^\star_{A \setminus \{ r_0 \}} \) and that \( z_0, z_1 \) are the unique
points of \(Z \) which realize the distance \( r_1 \). Moreover, they are
isolated in \( Z \), so they belong to any dense subset of \( Z \). Finally,
notice that \( d_Z(z_0,z) = d_Z(z_1,z) \) for any \( z \in W \): it follows
that if \( X \in B \subseteq \V^\star_{A'} \), the choice of the gluing point
in \( X \) (because of Claim~\ref{claim:invuniv1}(3)) and of \( z_0 \) or \(
z_1 \) as gluing point in $Z$ does not change the space \( X
\oplus_{\bar{r}} Z \).

Let now \( f \)  be the Borel map
\[
f \colon B \to \V^\star_A, \qquad X \mapsto \widetilde{X} = X \oplus_{\bar{r}} Z,
\]
where \( X \) and \( Z \) are glued using one of \( z_0, z_1 \). Notice that
\( \widetilde{X} \in \V^\star_A \) because \( r_0 \in A' \), \( X \in
\V^\star_{A'} \), and \( Z \in \V^\star_{A \setminus \{ r_0 \}} \), so that
\( f \) is well-defined.

It is not hard to see that \( f \) satisfies (2). Indeed, if \( \varphi \) is
an isometry (respectively, an isometric embedding) between \( X, X' \in B \),
then \( \varphi \cup \mathrm{id}_Z \) is an isometry (respectively, an
isometric embedding) between \( \widetilde{X} \) and \( \widetilde{X}' \).
Conversely, if \( \psi \) is an isometry (respectively, an isometric
embedding) between \( \widetilde{X} \) and \( \widetilde{X}' \), then \( \psi
\restriction X \) is an isometry (respectively, an isometric embedding)
between \( X \) and \( X' \) because of Claim~\ref{claim:invuniv1}(4) and the
fact that \( r_0 \notin D(Z) \).

We now check that \( C = [f(B)]_{\cong} \) is Borel in \( F(\mathbb{U}) \).
First observe that since topologically \( \widetilde{X} \) is  a direct sum
of \( X \) and \( Z \), then for any dense subset $D$ of $ \tilde X $ one has
$\forall x\in X\ \exists y\in D\ d_X(x,y)=r_0$. Together with the facts that
\( r_0 \notin D(Z) \) and $r_0 \neq \bar r$, this allows us to recover \( X
\) from the space \( \widetilde{X} \) as the completion (i.e.\ closure in \(
\mathbb{U} \)) of
\[
\{ \psi_n(\widetilde{X}) \mid \exists m (d(\psi_n(\widetilde{X}),\psi_m(\widetilde{X}))) = r_0 \},
\]
and \( Z \) as the completion of
\[
\{ \psi_n(\widetilde{X}) \mid \neg \exists m (d(\psi_n(\widetilde{X}),\psi_m(\widetilde{X}))) = r_0 \}.
\]

We now generalize this process to an arbitrary \( Y \in F(\mathbb{U}) \). Let
\( \mathrm{Rlz}(n,Y,r) \) be an abbreviation for the Borel condition
\[
\exists m (d(\psi_n(Y),\psi_m(Y)) = r),
 \]
and set
\[
X(Y) = \mathrm{cl}(\{ \psi_n(Y) \mid \mathrm{Rlz}(n,Y,r_0) \})
 \]
and
\[
Z(Y) = \mathrm{cl}(\{ \psi_n(Y) \mid \neg \mathrm{Rlz}(n,Y,r_0) \}).
 \]
Notice that the maps \( Y \mapsto X(Y) \) and \( Y \mapsto Z(Y) \) are Borel
functions from \( F(\mathbb{U}) \) into itself, and for \( X \in B \) we have
\( X(\widetilde{X}) \cong X \) and \( Z(\widetilde{X}) \cong Z \). Then for an
arbitrary \( Y \in F(\mathbb{U}) \) we have that \( Y \in C \) if and only if
\( \exists X \in B \, (Y  \cong \widetilde{X}) \), if and only if \( X(Y) \in
B \), \( Z(Y) \cong Z \), and \( Y \cong X(Y) \oplus_{\bar{r}} Z(Y) \) (where
the two spaces are glued via one of the only two points in \( Z(Y) \)
realizing \( r_1 \) in \( Z(Y) \)), if and only if
\begin{multline*}
X(Y) \in B \wedge Z(Y) \cong Z \wedge
\exists n [\psi_n(Y) \in Z(Y) \wedge \mathrm{Rlz}(n,Y,r_1) \wedge \\
\forall m,k (\psi_m(Y) \in X(Y) \wedge \psi_k(Y) \in Z(Y) \Rightarrow d(\psi_m(Y),\psi_k(Y)) = \max \{ \bar{r},d(\psi_k(Y),\psi_n(Y)) \})].
\end{multline*}
This is a Borel condition (here we are also using the fact that since
isometry on \( F(\mathbb{U}) \) is Borel bi-reducible with an orbit
equivalence relation, then the isometry class of a fixed element \( Z \) is
Borel), whence \( C \) is a Borel subset of \( F(\mathbb{U}) \).

Finally, let
\[
g \colon C \to B, \qquad Y \mapsto X(Y),
 \]
where \( X(Y) \) is as above. As already noticed, such map is Borel and \( (g
\circ f)(X) = g(\widetilde{X}) = X(\widetilde{X}) \cong X \) for every \( X
\in B \). Conversely, for every \( Y \in C \) we have \( (f \circ g)(Y) =
\widetilde{X(Y)} \cong X(Y) \oplus_{\bar{r}} Z \): since \( Y \cong X(Y)
\oplus_{\bar{r}} Z(Y) \) where \( Z(Y) \cong Z \) and \( X(Y) \) and \( Z(Y)
\) are glued using one of the only two points of \( Z(Y) \) which realize the
distance \( r_1 \) in \( Z(Y) \), it follows that \( (f \circ g)(Y) \cong
X(Y) \oplus_{\bar{r}} Z \cong X(Y) \oplus_{\bar{r}} Z(Y) \cong Y \).
\end{proof}

Notice that conditions (2) and (3) of Claim~\ref{claim:invuniv2} imply that
\( g \) reduces \( \sqsubseteq \restriction C \) and \( \cong \restriction C
\) to \( \sqsubseteq \restriction B \) and \( \cong \restriction B \),
respectively.

Let now \( R \) be an arbitrary analytic quasi-order on a standard Borel
space. By Claim~\ref{claim:invuniv1} there is \( B' \subseteq B \) Borel (in
both \( B \) and \( F(\mathbb{U}) \)) and invariant under isometry such that
\( R \sim_B {\sqsubseteq \restriction B'} \); let \( h_1 \colon \dom(R) \to
B' \) and \( h_2 \colon B' \to \dom(R) \) be witnesses of this last fact. Let
\( C' = [f(B')]_{\cong} = \{ Y \in F(\mathbb{U}) \mid \exists X \in B' (f(X)
\cong Y) \} \). Then \( C' \) is invariant under isometry by definition, and
it is a Borel subset of \( F(\mathbb{U}) \): indeed, this follows from the
fact that \( C'= g^{-1}(B') \) by Claim~\ref{claim:invuniv2}(3), and that \(
C \) is Borel in \( F(\mathbb{U}) \) by Claim~\ref{claim:invuniv2}(1).
Moreover, \( C' \subseteq C \subseteq \V^\star_A \). Finally, \( f \circ h_1
\) witnesses \( R \leq_B {\sqsubseteq \restriction C'} \), while \( h_2 \circ
(g \restriction C') \) witnesses \( {\sqsubseteq \restriction C'} \leq_B R \)
by parts (2) and (3) of Claim~\ref{claim:invuniv2}.
\end{proof}

\section{Open problems}

Theorem \ref{isomstarA} gives a fairly neat picture of the behaviour of the
relations $ \isom_A^\star$, except for the following case.

\begin{question}\label{qA}
If $A \in \D$ is dense in some right neighborhood of $0$ but does not contain
entirely any of them, can something more precise be said about the complexity
of $\isom_A^\star$?
\end{question}

Since for such $A$ we have that $\V^\star_A$ consists only of
zero-dimensional spaces by Theorem \ref{proponlyultrametric}(1), this
question is linked to one of the main questions of \cite{Gao2003} which is
still open (this question is discussed more at depth in \cite{ultrametric},
where it is labeled Question 7.1).

\begin{question}
What is the complexity of isometry between zero-dimensional Polish metric
spaces?
\end{question}

A step to get some insight into Question \ref{qA} would be to answer the
following:

\begin{question}
Which $A$ and $A'$ as in Question \ref{qA} are such that ${\isom_A^\star}
\leq_B {\isom_{A'}^\star}$?
\end{question}

This question has a positive answer for a given pair \( A,A' \) whenever
there exists $f \colon A \to A'$ which is injective and Polish metric
preserving. In fact, arguing as in the proof of case (iii) of
Theorem~\ref{thm:isomA} one can build a witness for ${\isom_A^\star} \leq_B
{\isom_{f(A)}^\star}$ by changing the distances of the spaces using $f$, and,
if $f(A) \subsetneq A'$, successively applying
Proposition~\ref{questionclemens} to obtain ${\isom_{f(A)}^\star} \leq_B
{\isom_{A'}^\star}$. For example, in this way one sees that ${\isom_{
\QQ^+}^\star} \leq_B {\isom_{(\RR^+ \setminus \QQ) \cup \{0\}}^\star}$ via
the map $f(q) = \sqrt2\, q$.

Another problem related to Question \ref{qA} is the following:

\begin{question}
Is it the case that ${\isom_A} \sim_B {\isom^\star_A}$ for every $A \in \D$?
\end{question}

Theorem \ref{thm:isomA} and the results following it give a positive answer
to this question under a wide range of hypotheses on $A$, but we do not know
if there are distance sets which do not satisfy any of those hypotheses.

\bibliographystyle{alpha}

\end{document}